\newtheorem{lemma}{Lemma}[section]
\newtheorem{corollary}[lemma]{Corollary}
\newtheorem{theorem}[lemma]{Theorem}
\newtheorem{proposition}[lemma]{Proposition}
\newtheorem{remark}[lemma]{Remark}
\newtheorem{example}[lemma]{Example}
\def\Sn{\operatorname{Sn}}
\def\Cs{\operatorname{Cs}}
\def\Res{\operatorname{Res}}
\DeclareMathOperator{\divv}{div}
\def\numer{\operatorname{num}}
\newcommand{\R}{\mathbb{R}}
\DeclareMathOperator{\Div}{div}
\date{}
\dedicatory{} \commby{}
\author{J. D. Garc\'{\i}a-Salda\~{n}a}
\address{Departament de Matem\`{a}tiques \\
Universitat Aut\`{o}noma de Barcelona \\ Edifici C 08193 Bellaterra,
Barcelona. Spain} \email{johanna@mat.uab.cat}
\author{A. Gasull}
\address{Departament de Matem\`{a}tiques \\
Universitat Aut\`{o}noma de Barcelona \\ Edifici C 08193 Bellaterra,
Barcelona. Spain} \email{gasull@mat.uab.cat}
\author{H. Giacomini}
\address{Laboratoire de Math\'{e}matiques et Physique Th\'{e}orique.
 Facult\'{e} des
Sciences et Techniques. Universit\'{e} de Tours, C.N.R.S. UMR 7350.
37200 Tours. France} \email{Hector.Giacomini@lmpt.univ-tours.fr}
\subjclass[2000]{Primary 34C07, Secondary: 34C23, 34C25, 34C37, 37C27, 37C29,
49J15}
\keywords{Planar polynomial  system,
 uniqueness and hyperbolicity of the limit cycle, polycycle,  bifurcation,
 phase portrait on the Poincar\'{e} disc, Dulac function, stability, nilpotent point,
 basin of attraction}
\begin{document}

\title[Bifurcation diagram and stability for a one-parameter family]
{Bifurcation diagram  and stability for\\ a one-parameter family of
planar vector fields}

\begin{abstract}
We consider the 1-parameter family of  planar quintic systems, $\dot
x= y^3-x^3$, $\dot y= -x+my^5$, introduced by A. Bacciotti in 1985.
It is known that it has at most one limit cycle and that it can
exist only when the parameter $m$ is in $(0.36,0.6)$. In this paper,
using the Bendixon-Dulac theorem, we give a new unified proof of all
the previous results,  we shrink this to $(0.547,0.6)$, and we prove
the hyperbolicity of the limit cycle. We also consider the question
of the existence of polycycles. The main interest and difficulty for
studying this family is that it is not a semi-complete family of
rotated vector fields. When the system has a limit cycle, we also
determine explicit  lower bounds of the basin of attraction of the
origin. Finally we answer an open question about the change of
stability of the origin for an extension of the above systems.
\end{abstract}

\maketitle

\section{Introduction and main results}

A. Bacciotti, during a conference about the stability of analytic
dynamical systems, held in Florence in  1985, proposed to study the
stability of the origin of the following  quintic system
\begin{equation}\label{sism}
\left\{\begin{array}{lll}
\dot{x}=y^3-x^3,\\
\dot{y}=-x+my^5, \qquad m\in\mathbb{R}.
\end{array}\right.
\end{equation}

 Two years later, a quite complete study of
\eqref{sism} was done  by Galeotti and Gori in~\cite{Ga-Go}. They
prove that, when $m\in(-\infty, 0.36]\cup[0.6,\infty)$, system
\eqref{sism} has no limit cycles and, otherwise, it has at most one.
Their proofs are mainly based on the study of the stability of the
limit cycles, controlled by the sign of its characteristic exponent,
together with a transformation of the system using a special type of
adapted polar coordinates. Their proof of the uniqueness of the
limit cycle does not provide its hyperbolicity.

In this paper we  refine  the above results. To guess which is the actual
bifurcation diagram we did first a numerical study, obtaining the following: it
seems that there exists a value $m^*>0$, such that:

\begin{enumerate}[(i)]
\item  System \eqref{sism} has no limit cycles if
$m\in(- \infty,m^*]\cup[0.6,+\infty)$. Moreover, for $m=m^*$ it has
a heteroclinic polycycle  formed by the separatrices of the two
saddle points located at $(\pm m^{-1/4},\pm m^{-1/4})$.

\item For $m\in(m^*,0.6)$ the system has exactly one unstable limit cycle.

\item The value $m^*$ is approximately $0.560115.$

\end{enumerate}
Recall that a polycycle is a simple closed curve formed by several solutions of the
system and admitting  a Poincar\'{e} return map. The first two items coincide with the
ones described in~\cite{Ga-Go}. In  that paper it is claimed that $m^*$ is between
$0.58$ and $0.59$, but our computations give a different result, which  we believe
that is the right one.

The first aim of this work is to obtain analytic results that confirm, as much as
possible, the above description. To clarify the phase portraits of the system, we
will draw them on the Poincar\'{e} disc, see \cite{ALGM, So}.

For $m\leq 0$, system \eqref{sism} has no periodic orbits because
 $x^2/2+y^4/4$ is a global Lyapunov function. Therefore,
 the origin is a global attractor. In particular, its phase portrait  is trivial.
Therefore, we will concentrate on the case $m>0.$ In this case, the
system has three critical points,
 $(\pm m^{-1/4},\pm m^{-1/4})$ and $(0,0)$. The first couple of points  are saddles
 and the third one is a monodromic nilpotent singularity. Its stability can be
 determined using the tools introduced in \cite{AlGa,Mo}, see Subsection
 \ref{ss:monod} and next Theorem~\ref{mteo3}. We prove:

\begin{theorem}\label{mteo} Consider system~\eqref{sism}.
\begin{itemize}
\item[(i)]  It has neither periodic orbits, nor
polycycles, when $m\in(-\infty,0.547]\cup[0.6,\infty)$. Otherwise,
it has at most one periodic orbit or one polycycle and both can not
coexist. Moreover, when the limit cycle exists, it is hyperbolic and
unstable.

\item[(ii)] For $m>0,$ their phase portraits   on the
Poincar\'{e} disc, are given in Figure \ref{Sphere}.

\item[(iii)] Let $\mathcal M$ be the set of values of $m$ for which it has a
 heteroclinic polycycle. Then $\mathcal M$ is finite, non-empty and it is contained
 in $(0.547,0.6)$. Moreover, when $m\in{\mathcal M}$, the corresponding system has no
 limit cycles and its phase portrait  is given by Figure~\ref{Sphere} {\rm (b)}.
\end{itemize}
\end{theorem}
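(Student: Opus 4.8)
The theorem has three parts, and my plan is to reduce all of them to the single Bendixon-Dulac mechanism announced in the abstract. The heart of the argument is to produce, for each relevant range of $m$, a Dulac function $V$ and a sign condition on the divergence of $V$ times the vector field. For system~\eqref{sism} with field $X=(y^3-x^3,\,-x+my^5)$, I would look for a polynomial (or rational) function $V(x,y)$ and constant $k$ such that the associated quantity
\begin{equation*}
M_k:=\divv(V\,X)+\,k\,V\,\divv(X)
\end{equation*}
has a definite sign on a suitable region of the Poincar\'{e} disc. The classical Bendixon-Dulac theorem then forbids periodic orbits and polycycles inside any region where $M_k$ does not change sign, because a periodic orbit or a polycycle surrounding a region of definite-sign divergence-type integrand leads to a contradiction via Green's theorem. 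The first task, therefore, is to exhibit such a $V$ valid for $m\in(-\infty,0.547]\cup[0.6,\infty)$; this gives the nonexistence statement of part (i). For the complementary band, I would arrange that $M_k$ keeps a constant sign outside at most one annular region, so that at most one limit cycle or one polycycle can exist, and never both simultaneously. The \emph{sign} of $M_k$ on the limit cycle controls its characteristic exponent, which simultaneously yields hyperbolicity and instability; this is the standard bonus of the Bendixon-Dulac method and requires no separate computation.

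For part (ii), the phase portraits on the Poincar\'{e} disc follow once the local and global structure is pinned down. I would first classify the finite critical points: the two saddles at $(\pm m^{-1/4},\pm m^{-1/4})$ are elementary and handled by linearization, while the origin is a monodromic nilpotent singularity whose stability I would determine using the generalized Lyapunov-constant machinery from~\cite{AlGa,Mo} cited in the excerpt and in Theorem~\ref{mteo3}. Next I would compactify and study the behavior at infinity via the Poincar\'{e} sphere, using the standard charts to read off the critical points on the equator and their hyperbolic/parabolic sectors. Combining the finite and infinite data with the uniqueness information from part (i) and the Poincar\'{e}-Bendixson theorem determines the global phase portrait uniquely in each parameter subinterval, giving the figures; the only delicate point is matching separatrix connections, which is exactly where the polycycle appears.

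Part (iii) is where the real difficulty lies, and I would attack it by a continuity-plus-finiteness argument on the set $\mathcal M$. Non-emptiness should come from an intermediate-value argument: the relative position of the unstable and stable separatrices of the two saddles varies monotonically enough in $m$ that a connection must occur somewhere in the band; I would quantify this by comparing the separatrices against an explicit algebraic barrier curve, showing the connection is trapped strictly inside $(0.547,0.6)$. Finiteness of $\mathcal M$ is the main obstacle, because the family is \emph{not} a semi-complete family of rotated vector fields, so the usual monotonicity that would make the polycycle occur for a single parameter value is unavailable; instead I would argue that a displacement (or splitting) function $d(m)$ measuring the gap between the two separatrices is analytic in $m$ and not identically zero on the band, whence its zero set is finite. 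The coexistence exclusion, namely that for $m\in\mathcal M$ there is no limit cycle, follows from the at-most-one conclusion of part (i) together with the observation that the polycycle and a surrounding limit cycle cannot both bound a region of definite-sign $M_k$. I expect the bulk of the work to be the careful choice of $V$ and $k$, and the verification that the resulting $M_k$ has the claimed sign throughout the compactified region, since this single construction underwrites parts (i) and (iii) at once.
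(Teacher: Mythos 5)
Your plan reproduces the paper's architecture quite faithfully: the engine is a Bendixson--Dulac criterion with a rational $V$ and the quantity $M=\langle\nabla V,X\rangle-k\,V\Div(X)$ of one sign (Proposition~\ref{GenBenDul}), whose by-product $\int_0^T (M/V)(\gamma(t))\,dt=-k\,h(\Gamma)$ gives hyperbolicity and instability of the cycle at no extra cost, exactly as you say; part (ii) is done by Poincar\'e compactification, blow-ups at infinity and Theorem~\ref{mteo3}; and part (iii) is proved precisely by your mechanism: a splitting function $\delta(m)=y^s(m)-y^u(m)$ is shown to be analytic (Proposition~\ref{ppp}; note this requires an actual argument, built on Perko's tools \cite{Pe} and the hyperbolicity of the saddles, not mere assertion), it is positive at $m=0.547$ and negative at $m=0.6$, and non-accumulation of zeros of a non-trivial analytic function gives finiteness and non-emptiness of $\mathcal M$, with coexistence excluded by the at-most-one statement.

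The genuine gap is in part (i), on the subinterval $m\in[1/2,0.547]$. You propose to cover the whole non-existence range by exhibiting a Dulac pair $(V,k)$, but this is exactly where the paper \emph{renounces} the Bendixson--Dulac theorem: the proof of Proposition~\ref{547} states explicitly that it does not use it, because as $m$ approaches the bifurcation value $m^*\approx 0.560115$ no workable $V$ is available. The substitute idea, which your proposal lacks, is to construct a positively invariant region $\mathcal R$ whose boundary $\{V_2=0\}$ passes through the two saddles and is tangent to their separatrices; if a limit cycle existed it would be unique, hyperbolic and repelling by Proposition~\ref{uniq} (a Dulac argument valid on all of $[1/2,3/5)$), and Poincar\'e--Bendixson inside $\mathcal R$ would then force a second limit cycle, a contradiction. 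Note the logical order: uniqueness and hyperbolicity on the band are proved first and are then \emph{leveraged} to obtain non-existence on part of that band; a "nonexistence by Dulac, then uniqueness" pipeline cannot be rearranged this way. Moreover, this invariant region is what anchors your own part (iii): it is the algebraic barrier that yields $\delta(0.547)>0$ (Proposition~\ref{35} gives $\delta(0.6)<0$), so without it your intermediate-value argument has no sign at the left endpoint. Two secondary remarks: even where pure Dulac arguments do work, the paper needs several different seeds (a quadratic $2x^2+y^4$ on $(0,9/25]$, a Kummer-derived series on $[3/5,\infty)$, a truncated hypergeometric solution of degree $12$ on $[1/2,3/5)$, and on $(9/25,1/2)$ the weakened hypothesis of Remark~\ref{GenBenDul2} in which $M$ may vanish along curves without contact), so a single $(V,k)$ for the whole range is unrealistic; and your convention $\divv(VX)+kV\divv(X)$ with $k>0$ corresponds to multipliers $|V|^{1/(1+k)}$ rather than the paper's $|V|^{-1/k}$, so the sign bookkeeping that converts the sign of $-VM$ into instability of the cycle must be redone consistently.
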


\begin{figure}[h]
\begin{tabular}{cccc}
\epsfig{file=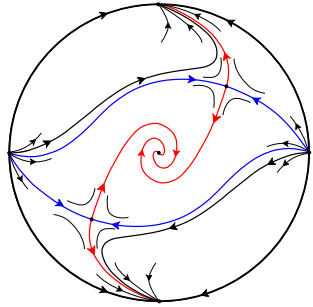,width=4cm, height=4cm}\,\, &
\epsfig{file=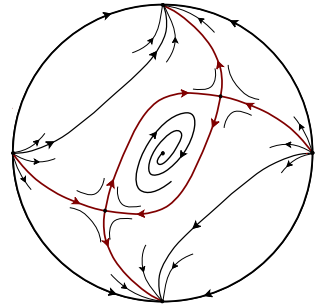,width=4cm, height=4cm}&\\
(a) When $m\in(0,0.547]$, or when  & (b) When $m\in(0.547,0.6)$ and \\
$m\in(0.547,0.6)$ and neither the &  the polycycle exists.\\polycycle nor the limit
cycle
exist.\\\\
\epsfig{file=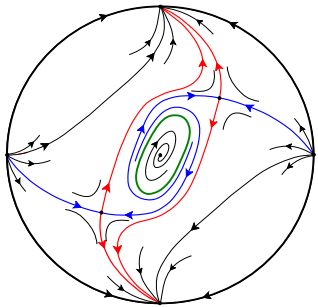,width=4.5cm, height=4cm}&
\epsfig{file=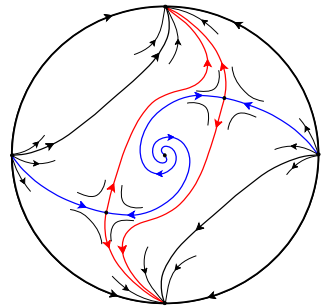,width=4.5cm, height=4cm}\\
(c) When $m\in(0.547,0.6)$ and & (d) For $m\in[0.6,\infty)$\\ the
limit cycle exists.
\end{tabular}
\caption{Phase portraits of system \eqref{sism}.}\label{Sphere}
\end{figure}

Our  simulations show that (a), (b) and (c) of Figure \ref{Sphere}
occur when $m\in(0,m^*)$, $m=m^*$ and $m>m^*$, respectively, for
some $m^*\in(0.547,0.6)$, that numerically we have found to be
$m^*\approx 0.560115.$ We have not been able to prove the existence
of this special value $m^*,$ because our  system  is not a
semi-complete family of rotated vector fields (SCFRVF) and this fact
hinders  the obtention of the full bifurcation diagram; see the
discussion in Subsection \ref{ss:nscf} and Example \ref{exam}. This
is precisely the reason for which we have decided to push forward in
the study of system~\eqref{sism}. Our approach can be useful to
understand other interesting polynomial systems of differential
equations that have been considered previously; see for instance
\cite{ca,Gu}.

In any case, from our result, we know the existence of finitely many
values $m^*_j,$ $j=1,\ldots,k,$ where $k\ge1$, satisfying
$0.547<m^*_1<m^*_2<\cdots m^*_k<0.6$, such that phase portrait~(b)
only happens for these values. Moreover, for $m\in (0.547,m^*_1)$,
phase portrait~(a) holds, for $m\in (m^*_k,0.6)$ phase portrait~(c)
holds, and for each one of the remainder $k-1$ intervals, the phase
portrait does not vary on each interval and is either~(a) or~(c).

As a byproduct of our approach we can also give explicit  algebraic restrictions on
the initial conditions to ensure that the solutions starting at them tend to the
origin.

Recall that when a critical point, $\mathbf{p}\in\R^n$,  of a
differential system is an attractor we can define its basin of
attraction as
\[
\mathcal{W}^s_{\mathbf{p}}=\{\mathbf{x}\in\R^n\,:\, \lim_{t\to+\infty}
\varphi(t,\mathbf{x})=\mathbf{p}\},
\]
where $\varphi$ denotes the solution of the differential system such
that $\varphi(0,\mathbf{x})=\mathbf{x}.$ A very interesting
question, mainly motivated by Control Theory problems, consists in
obtaining testable conditions for ensuring that some initial
condition is in $\mathcal{W}^s_{\mathbf{p}}.$ Usually these
conditions are obtained using suitable Lyapunov functions. We prove
next result using a different approach based on the construction of
Dulac functions.

\begin{proposition}\label{mteo2} Let $\mathcal{W}^s_{\mathbf{0}}$ be the
basin of attraction of the origin of system \eqref{sism}. Consider
$V_m(x,y)=g_{0,m}(y)+g_{1,m}(y)x+g_{2,m}(y)x^2$, with
\begin{align*}
g_{2,m}(y)=&\frac{1}{89100}(3-10m)(3+35m)y^{12}-\frac{1}{6300}(75-125m)^{2/3}(3-13m)y^8
\\&+\frac{1}{90}(3-10m)y^6-\frac{1}{25}(75-125m)^{2/3}y^2+1,
\end{align*}
$g_{1,m}(y)=g_{2,m}'(y)$ and
$g_{0,m}(y)=g_{2,m}''(y)/2-my^5g_{2,m}'(y)/2+5my^4g_{2,m}(y)/3.$
Then, for $m\in(0.5,0.6),$ $\mathcal{U}_m\subset
\mathcal{W}^s_{\mathbf{0}}$, where $\mathcal{U}_m$ is the bounded
connected component of $\{(x,y)\in\R^2\,:\, V_m(x,y)\le0\},$ that
contains the origin and whose boundary is the oval of $V_m(x,y)=0,$
see Figure~\ref{cuenca}.
\end{proposition}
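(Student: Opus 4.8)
The plan is to use the oval of $V_m=0$ simultaneously as the boundary of a positively invariant region and as the level set producing a Dulac function, and then to invoke the Poincar\'e--Bendixson theorem. Write $X=(P,Q)=(y^3-x^3,-x+my^5)$ and $\dot V_m=\tfrac{\partial V_m}{\partial x}P+\tfrac{\partial V_m}{\partial y}Q$. The organizing quantity is
\[
M:=\tfrac23\,V_m\,\divv(X)-\dot V_m,\qquad \divv(X)=-3x^2+5my^4.
\]
First I would record the two roles of $M$. On the curve $V_m=0$ one has $M=-\dot V_m$, so $M\ge 0$ there is exactly the statement that $X$ points into $\{V_m<0\}$ along the oval, i.e. $\mathcal U_m$ is positively invariant. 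On the other hand, on $\{V_m<0\}$ the function $B:=(-V_m)^{-3/2}$ is a well-defined Dulac function and a direct computation gives $\divv(BX)=-\tfrac32(-V_m)^{-5/2}M$, so $M\ge 0$ on $\mathcal U_m$ forces $\divv(BX)\le 0$ there.

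The first key step, and the reason for the precise definitions of $g_{0,m},g_{1,m},g_{2,m}$, is that $M$ is only \emph{affine} in $x$. Indeed, writing $V_m=g_0+g_1x+g_2x^2$ and expanding $M$ in powers of $x$, the choice of the factor $\tfrac23$ kills the $x^4$ coefficient, the relation $g_1=g_2'$ kills the $x^3$ coefficient, and the formula $g_0=\tfrac12 g_2''-\tfrac12 my^5g_2'+\tfrac53 my^4g_2$ is exactly what is needed to kill the $x^2$ coefficient. Hence $M=A_m(y)\,x+B_m(y)$ for explicit polynomials $A_m,B_m$. I would carry out this elimination as the first computation; it is routine but it is what makes everything else tractable.

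Combining the two observations, it suffices to prove that $M\ge 0$ on $\overline{\mathcal U_m}$ for every $m\in(0.5,0.6)$: this yields positive invariance of $\mathcal U_m$ and, via $B$, the absence of periodic orbits inside the simply connected region $\mathcal U_m$; since the only critical point of \eqref{sism} in $\mathcal U_m$ is the monodromic attracting origin and no polycycle can live in a region without saddles, the Poincar\'e--Bendixson theorem then gives $\mathcal U_m\subset\mathcal W^s_{\mathbf 0}$. Because $M$ is affine in $x$, its restriction to each horizontal slice of $\overline{\mathcal U_m}$ attains its minimum at an endpoint, which lies on the oval; thus $M\ge 0$ on $\overline{\mathcal U_m}$ reduces to $M\ge 0$ on the boundary oval $\{V_m=0\}$.

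The hard part is this last positivity, uniform in the parameter. Since I cannot parametrize the oval, I would control the common zeros of $M$ and $V_m$ through the resultant $\Res_x(V_m,M)$, a polynomial in $y$ and $m$: showing, for instance by a Sturm or interval analysis of its numerator $\numer$ using $m\in(0.5,0.6)$, that it has no real root on the $y$-range spanned by $\overline{\mathcal U_m}$ proves that $M$ does not vanish on the oval, hence keeps a constant sign there, which I would then fix to be $\ge 0$ by evaluating $M$ at one convenient boundary point. Running in parallel, I would also have to establish that $\{V_m\le 0\}$ genuinely has a bounded component $\mathcal U_m$ surrounding the origin for these $m$: note that $V_m(0,0)=-\tfrac1{25}(75-125m)^{2/3}<0$, while the leading $x^2y^{12}$ coefficient $\tfrac1{89100}(3-10m)(3+35m)$ is negative, so the oval must be shown to be isolated from the unbounded part of the zero set, again via a discriminant or resultant computation. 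I expect this real-algebraic verification, made delicate by the fractional power $(75-125m)^{2/3}$ and by the need for uniformity over the whole interval $(0.5,0.6)$, to be the main obstacle; the dynamical conclusion is then immediate.
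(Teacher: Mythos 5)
Your proposal is correct and, at its core, runs on exactly the same machinery as the paper: the same $V_m$ and exponent $k=2/3$ (your Dulac function $(-V_m)^{-3/2}$ is the paper's $|V|^{-1/k}$), the same structural fact that the prescribed $g_{1,m}$ and $g_{0,m}$ make $M$ affine in $x$ (this is Lemma~\ref{LeVM}~(ii)), and the same dynamical endgame --- transversal oval, Dulac function inside, Poincar\'e--Bendixson --- which is precisely the content of Proposition~\ref{GenBenDul} and Corollary~\ref{co:basin}. The genuine difference is in how the sign of $M$ is certified. The paper does not redo this work for Proposition~\ref{mteo2}: its proof is a two-line citation of Proposition~\ref{uniq}, whose proof establishes that $\{M=0\}$ does not meet the larger explicit region $\Omega$ of Lemma~\ref{remcuad} by checking, via Lemma~\ref{ll} and Sturm-type computations, the algebraic arcs of $\partial\Omega$; since $M$ is affine in $x$ its zero set has no ovals, so it cannot enter $\Omega$ without crossing the boundary. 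You instead work directly on the oval: your slice argument (affinity in $x$ implies the minimum of $M$ on each horizontal section of $\overline{\mathcal U_m}$ sits on the boundary) reduces everything to $M\ge 0$ on $\{V_m=0\}$, which you control through $\Res_x(V_m,M)$ --- the analogue of the computation $\Res(V,M,x)=(n^2y^2-1)P_{n,30}(y)$ that the paper carries out in Proposition~\ref{925} for a different $V$ and a different purpose. The paper's route buys computations over explicitly known arcs and reuse of the uniqueness proof; yours buys a self-contained argument confined to the set that actually matters.

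One step of your plan needs patching: you propose to exclude real roots of $\Res_x(V_m,M)$ ``on the $y$-range spanned by $\overline{\mathcal U_m}$'', but that range is defined in terms of the unknown oval, so the verification as stated is circular. You need an a priori localization of the oval --- for instance, first confining it to $\Omega$ or to the square $|x|,|y|\le m^{-1/4}$, or running the Sturm/discriminant analysis on a fixed $y$-interval that is independently shown to contain the oval; the irrational coefficient is then handled, as in the paper, through the substitutions $n=\sqrt[4]{m}$, $s=(75-125m)^{2/3}$ and the relation $s^3=(75-125n^4)^2$. The same remark applies to the existence and isolation of the bounded component $\mathcal U_m$, which you correctly flag as an obligation (the paper settles it inside the proof of Proposition~\ref{uniq}, using that $V$ is quadratic in $x$). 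These are fixable technical points, not flaws in the approach.
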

\begin{figure}[h]
\epsfig{file=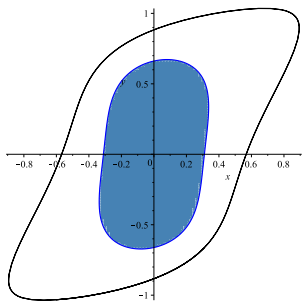,width=4cm, height=4cm} \caption{The limit
cycle of system~\eqref{sism} and the  set $\mathcal{U}_m$,
introduced in Proposition~\ref{mteo2},  when
$m=0.57.$}\label{cuenca}
\end{figure}

As we will see,  the proof of the above proposition is an
straightforward consequence of Proposition~\ref{uniq}. Using the
same tools,  it can be shown that the same result also holds for
smaller values of $m$. In any case, notice that this proposition
covers all the values of $m$ for which the system has limit cycles.

Studying the stability of the origin of system~\eqref{sism} we realized that, using
the same tools, we could solve an open question left in~\cite{Ga-Go}. Our third
result studies the stability of the origin of the following generalization of
system~\eqref{sism}:
\begin{equation}\label{sismg}
\left\{\begin{array}{lll}
\dot{x}=y^3-x^{2k+1},\\
\dot{y}=-x+my^{2s+1}, \qquad m\in\mathbb{R}\quad\mbox{and}\quad k,s\in\mathbb{N}^+.
\end{array}\right.
\end{equation}
In~\cite{Ga-Go}, the authors  gave the stability of the origin when
$s\ne2k$ and ask whether it is true or not that the change of
stability of the origin when $s=2k$ is at the value
$m=(2k+1)/(4k+1)$.  We will prove that their guess was not correct
for $k>1$. Next result shows that when $s=2k$, the stability changes
at
\begin{equation}\label{mmm}
 m=\frac{(2k+1)!!}{(4k+1)!!!!},
\end{equation}
where, given $n\in\mathbb{N}^+$,  $n!!$ and  $n!!!!$ are defined  recurrently, as
follows,
\[
n!!= n\times (n-2)!!, \quad  n!!!!= n\times (n-4)!!!!,
\]
with  $1!!=1, 2!!=2$ and $j!!!!=j$ for $1\le j\le 4.$ Notice that when $k=1,$ the
right hand-side of \eqref{mmm} and $(2k+1)/(4k+1)$  coincide and give  $m=3/5,$
which is one of the values appearing in Theorem~\ref{mteo}.
\begin{theorem}\label{mteo3} Consider system~\eqref{sismg}. Then:
\begin{itemize}
\item[(i)] When $s<2k$, the origin is an attractor when $m<0$ and a repeller when $m>0.$

\item[(ii)] When $s>2k$, the origin is always an attractor.

\item[(iii)] When $s=2k$, the origin is an attractor when $m<{(2k+1)!!}/{(4k+1)!!!!}$ and
a repeller when the reverse inequality holds. Moreover, when $k=1$ and $m=3/5$ the
origin is a repeller and for $m\lesssim 3/5$ system \eqref{sism} has at least one
limit cycle near the origin.
\end{itemize}

\end{theorem}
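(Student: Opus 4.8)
The plan is to analyze the stability of the origin for system~\eqref{sismg}, a monodromic nilpotent singularity, by computing its generalized Lyapunov quantities. Since the origin is nilpotent (the linear part is degenerate), the standard Hopf-type analysis does not apply directly; instead I would use the algorithmic machinery for nilpotent monodromic points referenced in the excerpt (see \cite{AlGa,Mo} and Subsection~\ref{ss:monod}). The first step is to verify monodromy: for $m>0$ the origin is a focus or center, and the generalized polar or quasi-homogeneous blow-up adapted to the weights of the system should confirm this. The key is to choose weights $(\alpha,\beta)$ for $x$ and $y$ so that the principal part of the vector field is quasi-homogeneous; given the monomials $y^3, x^{2k+1}, x$, and $y^{2s+1}$, the natural dominant balance comes from $\dot x \approx y^3$ and $\dot y \approx -x$, suggesting weights that make $y^3$ and $x$ comparable, i.e.\ $\deg(x)=3\deg(y)$.

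With the monodromy established, the core computation is to determine the first nonzero generalized Lyapunov quantity $V_\ell$, whose sign governs stability: the origin is an attractor when $V_\ell<0$ and a repeller when $V_\ell>0$, and the change of stability occurs precisely when the first quantity that depends on $m$ vanishes. I would set up the return map (or equivalently integrate the angular system obtained after the quasi-homogeneous blow-up) as a formal series and track the coefficients. For cases~(i) and~(ii), where $s\neq 2k$, one monomial dominates the other in the weighted sense, so the lowest-order quantity is determined by a single term: when $s<2k$ the $my^{2s+1}$ term enters at lower weighted order and produces a quantity proportional to $m$ (giving attractor for $m<0$, repeller for $m>0$), whereas when $s>2k$ the $-x^{2k+1}$ dissipative term dominates and forces an attractor regardless of the sign of $m$. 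The delicate case is~(iii), $s=2k$, where both competing terms appear at the \emph{same} weighted order, so the first relevant Lyapunov quantity is a genuine $m$-dependent combination; the stability boundary is where this expression vanishes, which I expect to reduce to an identity of the form $C_1 - m\,C_2 = 0$ with $C_1,C_2$ given by products arising from integrating powers of $\sin$ and $\cos$ over a period.

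The hardest part will be evaluating these integrals in closed form to produce the exact threshold~\eqref{mmm}. I anticipate that integrating the relevant angular monomials over the period of the blown-up system yields Wallis-type integrals, i.e.\ ratios of products such as $\int_0^{2\pi}\cos^{a}\theta\,\sin^{b}\theta\,d\theta$, which for even exponents evaluate to expressions built from double factorials. The quadruple factorial $n!!!!$ in~\eqref{mmm} strongly suggests that the angular variable after the blow-up is not the usual $\theta$ but a generalized angle whose natural period integrals skip by four rather than two, consistent with the weight $3{:}1$ ratio between $x$ and $y$. Matching the combinatorics of these generalized Wallis integrals to the precise quotient $(2k+1)!!/(4k+1)!!!!$ is where the real work lies, and it is also where the earlier claim of \cite{Ga-Go} presumably went wrong: their guessed value $(2k+1)/(4k+1)$ is the $k=1$ specialization, but for $k>1$ the double/quadruple factorial structure diverges from that simple ratio. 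Finally, for the last assertion of~(iii) with $k=1$ and $m=3/5$, I would compute the next Lyapunov quantity (the first that survives when the leading one vanishes at $m=3/5$) to show it is positive, giving a repelling degenerate focus; then, since decreasing $m$ slightly below $3/5$ makes the leading quantity negative while the ambient region still forces trajectories inward, a standard Poincar\'{e}--Bendixson / bifurcation argument produces a small-amplitude limit cycle for $m\lesssim 3/5$.
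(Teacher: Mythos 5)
Your overall strategy is exactly the one the paper follows: pass to Lyapunov's generalized polar coordinates adapted to the quasi-homogeneous principal part, expand the return map, identify the first non-vanishing generalized Lyapunov constant (which for $s=2k$ is an $m$-linear combination of two generalized Wallis integrals), and, in the degenerate case $k=1$, $m=3/5$, compute the next non-zero constant and invoke a Hopf-like bifurcation. However, your weight computation is wrong, and the error is not cosmetic. Quasi-homogeneity of the principal part $\dot x = y^3$, $\dot y = -x$ forces $3\deg(y)=\deg(x)+d$ and $\deg(x)=\deg(y)+d$ simultaneously, whence $\deg(x)=2\deg(y)$ (type $(2,1)$, degree $d=1$); this is precisely the paper's choice $p=1$, $q=2$ after the swap $(x,y)\to(y,x)$, with level curves $2x^2+y^4=r^4$ in the original variables. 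Your prescription of making the monomials $y^3$ and $x$ comparable, i.e.\ $\deg(x)=3\deg(y)$, makes the two defining equations incompatible (the term $y^3$ would require $d=0$ while $x$ would require $d=2$), so with those weights the principal part is \emph{not} quasi-homogeneous, the angular equation does not decouple at leading order, and the Lyapunov-constant machinery you plan to run never gets started. (Also, monodromy holds for every $m$, not only $m>0$: the principal part is a global center for the Hamiltonian $x^2/2+y^4/4$.)

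The error propagates into the combinatorics you invoke. With the correct $(2,1)$ weights, the perturbing monomials $my^{2s+1}$ and $-x^{2k+1}$ enter the radial equation at orders $r^{2s}$ and $r^{4k}$ respectively, which is exactly what produces the case split $s\lessgtr 2k$ that you (correctly) state; with a $3{:}1$ weight the comparison would be $2s$ versus $6k$, i.e.\ $s\lessgtr 3k$, contradicting both the theorem and your own case analysis. Likewise, the quadruple factorials in \eqref{mmm} do not reflect a $3{:}1$ ratio: they arise because the generalized trigonometric functions attached to the quartic energy curves (the case $q=2$) have period integrals expressed through $\Gamma$ evaluated at quarter-integers, as in Lemma~\ref{calcul}\,(ii), and it is these values that collapse to the quotient $(2k+1)!!/(4k+1)!!!!$. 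Once the weights are corrected, the remainder of your plan --- a single dominant term giving a constant proportional to $m$ in case (i), a strictly negative constant in case (ii), a threshold of the form $C_1-mC_2=0$ in case (iii), and a positive higher-order constant at $k=1$, $m=3/5$ followed by a Hopf-like bifurcation argument --- coincides step by step with the paper's proof.
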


The  method used to study the stability of the origin of~\eqref{sismg}, when $s=2k$
and $k=1$, also works for deciding its stability  for the cases not covered by the
above theorem: $s<2k$ and $m=0$; and $s=2k$, $k>1$ and $m$ as in~\eqref{mmm}.
Nevertheless, the computations are tedious and we have decided do not perform them.

The paper is structured as follows. In Section \ref{ss:monod}  we prove Theorem
\ref{mteo3}. Section \ref{se:pr}  collects some preliminary results. It starts with
a discussion on the
 differences between being or not, a  SCFRVF.
 Then, subsection \ref{ss:retrato} is devoted to study
  the singularities at infinite of system \eqref{sism}
 and  their phase portraits on the Poincar\'{e} disc. Afterwards, we
 present some Bendixson-Dulac type results that we will use to prove
 non-existence or uniqueness of periodic orbits or polycycles.
 Finally, we introduce a result for controlling the number of roots  of 1-parameter  families of
 polynomials and we show that our system can be reduced to an Abel
 differential equation.

In Section \ref{se:ne1} we prove the non-existence results for $m\in(-\infty,
0.36]\cup[0.6,\infty)$. Our proof is different of that of \cite{Ga-Go} and it is
mainly based on the use of Dulac functions.

In Section \ref{se:uni} we prove the existence of at most one periodic orbit when
$m\in(1/2,0.6)$. Our approach also gives the hyperbolicity of the orbit and again
uses a Bendixson-Dulac type result. This section also includes the proof of
Proposition~\ref{mteo2}.

Section \ref{se:ne2} is devoted to enlarge the region where we can assure the  non
existence of periodic orbits and polycycles, proving this for $m\in(9/25,0.547]$.
The proof uses once more a suitable Dulac function in a part of the interval and
the Poincar\'{e}-Bendixon theorem, together with the hyperbolicity of the limit cycle,
whenever it exists, for the remaining values of $m.$

Section \ref{se:hetero} deals with the existence of polycycles for the system.
Finally in Section \ref{se:teo} we glue all
the above results to prove Theorem \ref{mteo}.

\section{Stability of the origin and proof of Theorem \ref{mteo3}}\label{ss:monod}

Notice that the origin of \eqref{sism} and \eqref{sismg} are nilpotent critical
points and there are several tools for studying its local stability, see for
instance \cite{AlGa,Ga-To,Mo}. We will follow the approach of \cite{AlGa,Ga-To},
based on the polar coordinates introduced by Lyapunov in~\cite{lia}, to study of
the stability of degenerate critical points.

Let $u(\varphi)=\Cs(\varphi)$ and $v(\varphi)=\Sn(\varphi)$ be the solutions of the
Cauchy problem:
\[
\dot u=-v^{2p-1},\,\dot v =u^{2q-1},\quad u(0)=\root{2q}\of{1/p}\quad \mbox{ and
}\quad v(0)=0,
\]
where the prime denotes the derivative with respect to $\varphi$.

The Lyapunov \textit{generalized polar coordinates}  are $x=r^p\Cs(\varphi)$ and
$y=r^q\Sn(\varphi)$. They  parameterize the algebraic curves $p x^{2q}+q
y^{2p}=r^{2pq},$ that correspond to the level sets of above
$(p,q)-$quasi-homogeneous Hamiltonian system. In particular,
$p\Cs^{2q}(\varphi)+q\Sn^{2p}(\varphi)=1$, and both functions are smooth
$T_{p,q}$-periodic functions, where
$$
T=T_{p,q}=2p^{-1/2q}q^{-1/2p}
\frac{\Gamma\left(\frac{1}{2p}\right)\Gamma\left(\frac{1}{2q}\right)}
{\Gamma\left(\frac{1}{2p}+\frac{1}{2q}\right)},
$$
and $\Gamma$ denotes the  Gamma function. The general expression of
a differential system in these coordinates is:
\begin{equation}\label{cam}
\dot{r}=\frac{x^{2q-1}\dot{x}+y^{2p-1}\dot{y}}{r^{2pq-1}},\quad
\dot{\theta}=\frac{px\dot{y}-qy\dot{x}}{r^{p+q}}.
\end{equation}

In the nilpotent monodromic  case, the component $\dot{\theta}$ does not vanish in
a punctured neighborhood of the critical point. Hence, system \eqref{cam} can be
written in a neighborhood of $r=0$  as
\begin{equation}\label{eqR}
\frac{dr}{d\theta}=\sum_{i=1}^{\infty}R_i(\theta)r^i,
\end{equation}
where $R_i(\theta)$, $i\geq1$ are $T$-periodic functions. The solution of
\eqref{eqR} that for $\theta=0$ passes for $r=\rho$ can be written as the power
series
\begin{equation}\label{rrr}
r(\theta,\rho)=\rho+\sum_{i=2}^{\infty}u_i(\theta)\rho^i,\quad \mbox{with}\quad
u_i(0)=0,
\end{equation} and the functions $u_i$ can be computed solving recursive
linear differential equations obtained plugging~\eqref{rrr} in~\eqref{eqR}. It is
well-known that the stability of the origin is given by the first non-vanishing
generalized Lyapunov constant $V_k:=u_k(T)$.

To effectively  compute some integrals of the above generalized trigonometric
functions we will use the following result, see~\cite{Ga-To}.
\begin{lemma}\label{calcul} Let $\Sn$ and $\Cs$ be the (1,q)-trigonometrical functions and
let $T$ be their period. Then, for $i,j\in\mathbb{N}$,
\begin{itemize}
\item[(i)] $\int_0^T\Sn^i(\theta)\Cs^j(\theta)\,d\theta=0$ when
either $i$ or $j$ are odd.
\item[(ii)] $\int_0^T\Sn^i(\theta)\Cs^j(\theta)\,d\theta=
\dfrac{2\Gamma\left(\frac{i+1}{2}\right)\Gamma\left(\frac{j+1}{2q}\right)}
{q^{\frac{i+1}{2}}\Gamma\left(\frac{i+1}{2}+\frac{j+1}{2q}\right)}$ when $i$ and
$j$ are both even.
\item[(iii)]  For $q=2,$ $\int_0^\theta
\Cs^8(\psi)\,d\psi=\dfrac{6\Sn(\theta)\Cs^5(\theta)+10\Sn(\theta)\Cs(\theta)+5\theta}{21}.$
\item[(iv)]  For $q=2$, $\int_0^\theta
\Sn^4(\psi)\,d\psi=\dfrac{-\Sn^3(\theta)\Cs(\theta)-\Sn(\theta)\Cs(\theta)+\theta}{7}.$
\end{itemize}
\end{lemma}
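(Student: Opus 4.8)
I would treat the four items in two groups. Items (iii) and (iv) are closed-form antiderivatives for the special case $q=2$, and these I would establish by pure verification: differentiate the proposed right-hand side and check that it reproduces the integrand, then note that the right-hand side vanishes at $\theta=0$. Items (i) and (ii) are statements about the integral over a full period, and these I would obtain from the symmetries of the generalized trigonometric functions together with a reduction to a Beta integral. Throughout I use that, since these are the $(1,q)$-functions, we have $p=1$, so $\Cs'=-\Sn$, $\Sn'=\Cs^{2q-1}$, and the first integral $\Cs^{2q}(\theta)+q\Sn^2(\theta)=1$.

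\textbf{Items (iii) and (iv).} Here $q=2$, so $\Sn'=\Cs^3$ and $\Cs^4+2\Sn^2=1$. For (iii), differentiating $(6\Sn\Cs^5+10\Sn\Cs+5\theta)/21$ and using $\Cs'=-\Sn$, $\Sn'=\Cs^3$ gives $(6\Cs^8-30\Sn^2\Cs^4+10\Cs^4-10\Sn^2+5)/21$; substituting $\Sn^2=(1-\Cs^4)/2$ collapses the bracket to $21\Cs^8$, i.e.\ the derivative equals $\Cs^8$. The same computation for (iv) reduces the derivative of $(-\Sn^3\Cs-\Sn\Cs+\theta)/7$, after using $\Cs^4=1-2\Sn^2$, to $\Sn^4$. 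Since $\Sn(0)=0$ makes both right-hand sides vanish at $\theta=0$, they are the asserted antiderivatives. This step is routine.

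\textbf{Item (i).} This is a parity argument resting on two symmetries, each deduced from uniqueness of solutions of the defining Cauchy problem. First, $\Cs$ is even and $\Sn$ is odd, because $(\Cs(-\theta),-\Sn(-\theta))$ solves the same initial value problem as $(\Cs,\Sn)$ and hence coincides with it. Second, there is a half-period antisymmetry $\Cs(\theta+T/2)=-\Cs(\theta)$, $\Sn(\theta+T/2)=-\Sn(\theta)$, obtained the same way once one checks from the geometry of the closed level curve $\Cs^{2q}+q\Sn^2=1$ that $\Cs(T/2)=-1$ and $\Sn(T/2)=0$. If $i$ is odd, shifting the interval to $[-T/2,T/2]$ by periodicity makes $\Sn^i\Cs^j$ odd, so the integral vanishes. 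If $i$ is even and $j$ is odd, splitting $[0,T]$ at $T/2$ and applying the half-period antisymmetry shows the integral equals $\bigl(1+(-1)^{i+j}\bigr)$ times the integral over $[0,T/2]$, hence is zero. Together these cover every case in which $i$ or $j$ is odd.

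\textbf{Item (ii) and the main obstacle.} For $i,j$ both even the integrand is even, half-period invariant, and (using $\Cs(T/2-\theta)=-\Cs(\theta)$, $\Sn(T/2-\theta)=\Sn(\theta)$) symmetric about $T/4$, so $\int_0^T=4\int_0^{T/4}$. On $[0,T/4]$ the function $\Sn$ increases monotonically from $0$ to $q^{-1/2}$; the substitution $s=\Sn(\theta)$, with $d\theta=ds/\Cs^{2q-1}$ and $\Cs^{2q}=1-qs^2$, turns the quarter-period integral into $\int_0^{q^{-1/2}}s^{i}(1-qs^2)^{(j+1)/(2q)-1}\,ds$, and then $w=qs^2$ brings it to $\tfrac12\,q^{-(i+1)/2}B\bigl(\tfrac{i+1}{2},\tfrac{j+1}{2q}\bigr)$. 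Multiplying by $4$ and writing $B$ in terms of $\Gamma$ yields precisely the stated formula. The genuinely delicate points are confined to (i) and (ii): justifying the half-period antisymmetry (equivalently, that $\Sn,\Cs$ trace the oval $\Cs^{2q}+q\Sn^2=1$ with the symmetric geometry forcing $\Cs(T/2)=-1$) and the consequent reduction $\int_0^T=4\int_0^{T/4}$. Once these symmetries are in place, the two substitutions in (ii) are careful but routine bookkeeping, the only subtlety being to track the exponents so that the Beta parameters emerge as $\tfrac{i+1}{2}$ and $\tfrac{j+1}{2q}$ and the prefactor as $q^{-(i+1)/2}$.
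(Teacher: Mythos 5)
Your proposal is correct, but there is in fact nothing in the paper to compare it against: the paper does not prove Lemma~\ref{calcul} at all, it simply imports it from \cite{Ga-To}. Your argument therefore serves as a self-contained replacement for that citation, and it checks out in every item. For (iii) and (iv), differentiation with $\Cs'=-\Sn$, $\Sn'=\Cs^3$ and the first integral $\Cs^4+2\Sn^2=1$ indeed yields $21\Cs^8$ and $7\Sn^4$, and both right-hand sides vanish at $\theta=0$. For (i) and (ii), the three symmetries you use (evenness/oddness, half-period antisymmetry, reflection about $T/4$) all follow from uniqueness for the defining Cauchy problem, and your two substitutions give the quarter-period value $\tfrac12\,q^{-(i+1)/2}B\bigl(\tfrac{i+1}{2},\tfrac{j+1}{2q}\bigr)$, hence the stated formula after multiplying by $4$. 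Two small points deserve tightening, and both can be fixed with the tools you already have. First, the values $\Sn(T/2)=0$, $\Cs(T/2)=-1$ need not be read off ``from the geometry'': oddness plus $T$-periodicity give $\Sn(T/2)=\Sn(-T/2)=-\Sn(T/2)$, so $\Sn(T/2)=0$; then $\Cs^{2q}(T/2)=1$, and $\Cs(T/2)=+1$ is excluded because it would make $T/2$ a period, contradicting minimality of $T$. Second, for $s=\Sn(\theta)$ to be a legitimate change of variables on $[0,T/4]$ you need $\Cs>0$ on $[0,T/4)$, i.e.\ that $T/4$ is the \emph{first} zero of $\Cs$; this follows by letting $\theta_0$ be the first zero (it exists since $\Cs(T/2)=-1$), noting $\Sn(\theta_0)=q^{-1/2}$, and using the reflection $(\Cs(\theta),\Sn(\theta))=(-\Cs(2\theta_0-\theta),\Sn(2\theta_0-\theta))$ given by uniqueness, which forces $4\theta_0$ to be a period, so $\theta_0\ge T/4$, while your reflection about $T/4$ already gives $\Cs(T/4)=0$, so $\theta_0=T/4$. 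With these two remarks inserted, the proof is complete and rigorous.
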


\begin{proof}[Proof of Theorem~\ref{mteo3}]
By using the transformation $(x,y)\to (y,x)$, system \eqref{sismg} becomes
\begin{equation}\label{sismt}
\left\{\begin{array}{lll} \dot{x}=-y+mx^{2s+1},\\
\dot{y}=x^3-y^{2k+1}.
\end{array}\right.
\end{equation}
We use \eqref{cam}, with $p=1$ and $q=2$, to transform it into
\begin{equation*}\label{PoGral}
\left\{\begin{array}{lll}
\dot{r}=m\Cs^{2s+4}(\theta)r^{2s+1}-\Sn^{2k+2}(\theta)r^{4k+1},
\\
\dot{\theta}=r-\Cs(\theta)\Sn^{2k+1}(\theta)r^{4k}-2m\Cs^{2s+1}(\theta)\Sn(\theta)r^{2s},
\end{array}\right.
\end{equation*}
or equivalently,
\begin{equation}\label{EqGe}
\frac{dr}{d\theta}=\frac{m\Cs^{2s+4}(\theta)r^{2s}-\Sn^{2k+2}(\theta)r^{4k}}{
1-\Cs(\theta)\Sn^{2k+1}(\theta)r^{4k-1}-2m\Cs^{2s+1}(\theta)\Sn(\theta)r^{2s-1}}.
\end{equation}
The Taylor series of the right hand-side of \eqref{EqGe} at the origin has three
possibilities:

(i) When $s<2k$, then \eqref{EqGe} becomes
\begin{equation*}
\frac{dr}{d\theta}=m\Cs^{2s+4}(\theta)r^{2s}+O(r^{4k}).
\end{equation*}
Therefore, using the method explained above and  Lemma~\ref{calcul}, we get that
its first Lyapunov constant is
\begin{equation}\label{s<2k}
V_{2s}=m\int_0^T
\Cs^{2s+4}(\theta)\,d\theta=\frac{m\,\sqrt{2\pi}\,\Gamma\left(\frac{2s+5}{4}\right)}
{\Gamma\left(\frac{2s+7}{4}\right)}.
\end{equation}
Then $m=0$ is the bifurcation value, and the origin of~\eqref{sismg} changes its
stability from attractor to repeller as $m$ goes from negative values to positive
values.

(ii) Suppose $s>2k$, then the Taylor expansion of \eqref{EqGe} at $r=0$ is
\begin{equation*}
\frac{dr}{d\theta}=-\Sn^{2k+2}(\theta)r^{4k}+O(r^{2s}).
\end{equation*}
By using the same method, we obtain that the first Lyapunov constant is
\begin{equation}\label{s>2k}
V_{4k}=\int_0^T
-\Sn^{2k+2}(\theta)\,d\theta=-\frac{\Gamma\left(\frac{1}{4}\right)\Gamma\left(\frac{2k+3}{2}\right)}
{2^{\frac{2k+1}{2}}\Gamma\left(\frac{4k+7}{4}\right)}<0,
\end{equation}
and the stability of the origin of~\eqref{sismg} is independent  of $m$ and it is
an attractor for all $m$.

(iii) Finally, when $s=2k$ we have
\begin{equation}\label{ser}
\frac{dr}{d\theta}=\left(m\Cs^{4k+4}(\theta)-\Sn^{2k+2}(\theta)\right)r^{4k}+O(r^{8k-1}).
\end{equation}
Hence the first non-vanishing generalized Lyapunov constant is given by
$$
V_{4k}=\int_0^T \left(m\Cs^{4k+4}(\theta)-\Sn^{2k+2}(\theta)\right)d\theta.
$$
By using \eqref{s<2k} with $s=2k$ and \eqref{s>2k}, after some simplifications,  we
obtain that
$$
V_{4k}=\frac{2\pi^{3/2}\big(m(4k+1)!!!!-(2k+1)!!\big)}
{\left(\Gamma\left(\frac{3}{4}\right)\right)^2(4k+3)!!!! }. $$ Therefore the origin
of~\eqref{sismg} is attractor for $m<(2k+1)!!/(4k+1)!!!!$ and repeller for
$m>(2k+1)!!/(4k+1)!!!!$, as we wanted to prove.

In the particular case $s=2k$ and $k=1$, that corresponds to
system~\eqref{sism}, and when $m=3/5$ we have that $V_4=0.$ To proof
the theorem we continue computing the next non-zero Lyapunov
constant. For $s=2$ and $k=1$, equation~\eqref{EqGe}  writes as
\begin{equation*}
\frac{dr}{d\theta}=R_4(\theta)r^4+R_7(\theta)r^7+R_{10}(\theta)r^{10}+O(r^{13}),
\end{equation*}
with $R_4(\theta)=m\Cs^8(\theta)-\Sn^4(\theta)$,
\begin{equation*}
R_7(\theta)=2m^2 \Cs^{13}(\theta)\Sn(\theta) +m \Cs^9(\theta)\Sn^3(\theta)-2m
 \Cs^5(\theta)\Sn^5(\theta)-\Cs(\theta)\Sn^7(\theta)
\end{equation*}
and
\begin{align*}
R_{10}(\theta)=&4 m^3\Cs^{18}(\theta)\Sn^2(\theta)+4
m^2\Cs^{14}(\theta)\Sn^4(\theta)+m (1-4 m) \Cs^{10}(\theta)\Sn^6(\theta)
\\&-4m\Cs^6(\theta)\Sn^8(\theta)-\Cs^2(\theta)\Sn^{10}(\theta),
\end{align*}
with $m=3/5$. Following the procedure explained in this section we
obtain that $u_2=u_3=0$,
\begin{align*}
u_4(\theta)&= \int_ 0^\theta R_4(\psi)\,d\psi, \quad u_5=u_6=0,\\
u_7(\theta)&= \int_0^\theta \big(R_7(\psi)+4R_4(\psi)u_4(\psi)\big)\,d\psi,
\quad u_8=u_9=0,\\
u_{10}(\theta)&= \int_0^\theta
\Big(R_{10}(\psi)+7R_7(\psi)u_4(\psi)+4R_4(\psi)u_7(\psi)+6R_4(\psi)u_4^2(\psi)
\Big)\,d\psi.
\end{align*}
Using Lemma~\ref{calcul} and some easy computations we get  that  $V_1= \cdots =V_9
= 0$. Finally, it suffices to compute
\[
V_{10}= \int_0^T
\Big(R_{10}(\theta)+7R_7(\theta)u_4(\theta)+4R_4(\theta)u_7(\theta)\Big)\,d\theta,
\]
because  $\dfrac{du_4^3(\theta)}{d\theta}=3R_4(\theta)u_4^2(\theta).$ Using
integration by parts and the expression of $u_7'$ we arrive to
\begin{equation}\label{v10}
V_{10}= \int_0^T
\Big(R_{10}(\theta)+3u_4(\theta)u_7'(\theta)\Big)\,d\theta=\int_0^T
\Big(R_{10}(\theta)+3u_4(\theta)R_7(\theta)\Big)\,d\theta.
\end{equation}
Notice that applying (iii) and (iv) of Lemma~\ref{calcul} we know that
\[
u_4(\theta)=\int_0^\theta
\Big(\frac35\Cs^8(\psi)-\Sn^4(\psi)\Big)\,d\psi= \frac{
6\Sn(\theta)\Cs^5(\theta)+15\Sn(\theta)\Cs(\theta)+
5\Sn^3(\theta)\Cs(\theta)}{35}.
\]
Plugging this expression in~\eqref{v10}, using several times (i) and (ii) of
Lemma~\ref{calcul} and the properties of the $\Gamma$ function we arrive to
\begin{align*}
V_{10}=\frac{128}{1625}
\frac{\left(\Gamma\left(\frac{3}{4}\right)\right)^2}{\sqrt{\pi}}>0.
\end{align*}
Hence the origin is  unstable for $m= 3/5$. As a consequence, we know that  at $m=
3/5$ the system has a Hopf-like bifurcation. Therefore the system  has at least one
limit cycle near the origin for $m\lesssim 3/5$.
\end{proof}

\section{More preliminary results}\label{se:pr}

This section is a miscellaneous one and it is divided into several short
subsections containing either some tools that we will use to prove Theorems
\ref{mteo} and \ref{mteo2} or some preliminary results.

\subsection{Differences between  families that are SCFRVF and families
that are not}\label{ss:nscf}

On the one hand, if a one-parameter family of differential systems
is a SCFRVF, then there are many results that allow to control the
possible bifurcations; see \cite{Duff, Perko, Perko1}. One of the
most useful ones is the so called {\it non-intersection property.}
It asserts that if $\gamma_{1}$ and $\gamma_{2}$ are limit cycles
corresponding to systems with different values of $m,$ then
$\gamma_{1}\cap \gamma_{2}=\emptyset.$ Informally, we like to call
this property {\it Atila's property}\footnote{Recall that about
Atila, King of the Huns,  it was said that ``the grass never grew on
the spot where his horse had trod".}
 because it implies that, if for some value of $m$ a
limit cycle passes trough a region of the phase plane, this region
turns out to be forbidden for the periodic orbits that the system
could have for
 any other value of the parameter. As a consequence, in this case, the study
of  1-parameter bifurcation diagrams is much simpler.

For instance, consider a  1-parameter SCFRVF satisfying the
following property:

{\noindent \bf (P)} {\it For each $m\in(m_0,m_1)$, it has at most
one limit cycle, that we denote by $\gamma_m$. Here, if for some $m$
the corresponding system has no limit cycles then
$\gamma_m=\emptyset$.  Moreover, assume that
$\cup_{m\in(m_0,m_1)}\gamma_m$ covers a region of the plane where
all  the periodic orbits of the system have to cut. }

Therefore,  it holds that: {\it
 for $m\in \R\setminus(m_0,m_1)$ the system has no periodic
orbits.}

The above property has very important practical consequences if we want to
determine the values $m_0$ and $m_1$, that constitute, in many cases, the most
difficult ones to be obtained to complete the bifurcation diagram. Usually, one of
the values, say $m_0$ corresponds to a Hopf-like bifurcation, and some local
analysis allows to obtain it. Then, for instance,  if   for some value of $m$, say
$\widetilde m>m_0$, the system has no limit cycles then $m_1<\widetilde m$.  The
same idea can also be applied to obtain lower bounds of $m_1$. These facts simplify
a lot the obtention of analytic bounds for the value $m_1$ because it suffices to
deal with concrete systems, with fixed values of $m$. This approach has been
applied with success in many works; see for instance \cite{Ga-Ga-Gi,Ga-Gi-To1,Pe,
Perko1,Xian}.

On the other hand, if for a general family of vector fields, we have
that the same property {\bf (P)} given above holds, we can  say
nothing of what happens for $m\in \R\setminus(m_0,m_1)$. For this
reason, when we  study system \eqref{sism}, we can not ensure the
existence of a unique value  of $m$ for which phase portrait (b) of
Figure \ref{Sphere} appears; see also Example~\ref{exam}. We remark
that system \eqref{sism} is not a SCFRVF with respect to $m$, and
moreover we have not been able to transform it into an equivalent
one that were a SCFRVF.

 From our point of view, to introduce tools for studying  1-parameter families that
 are not SCFRVF is a challenge for the differential equations
 community.

\subsection{Global phase portrait}\label{ss:retrato}

We will draw the phase portraits of system \eqref{sism} on the
Poincar\'{e} disc, \cite{ALGM, So}. Recall that, from  the works of
Markus \cite{Ma} and Newmann \cite{Ne}, for knowing a phase portrait
it suffices to determine the type of critical points (finite and at
infinity), the configuration of their separatrices, and the number
and character of their periodic orbits.

We start  making a study of the critical points at infinity of the Poincar\'e
compactification of the system.  That is, we will use the transformations
$(x,y)=(1/z, u/z)$ and $(x,y)= (v/z, 1/z)$, with a suitable change of time to
transform system \eqref{sism} into two new polynomial systems, one in the $(u,
z)$-plane and another one in the $(v,z)$-plane respectively; see \cite{ALGM} for
the details. Then, for understanding the behavior of the solutions of~\eqref{sism}
near infinity it suffices to study  the type of critical points of the transformed
systems which are localized on the line $z=0$. These points are precisely the so
called critical points at infinity of system \eqref{sism}.

\begin{lemma}
By using the transformation $(x,y)=\left(v/z,1/z\right)$ and the
change of time $dt/d{\tau}=1/{z^4}$ system \eqref{sism} is
transformed into the system
\begin{equation}\label{CartaVZ}
\left\{\begin{array}{l}
v'=-mv+(1-v^3)z^2+v^2z^4,\\
z'=-mz+vz^5,
\end{array}\right.
\end{equation}
where the prime denotes the derivative with respect to $\tau$. The
origin is the unique critical point of \eqref{CartaVZ} on $z=0$ and
it is an attracting node.
\end{lemma}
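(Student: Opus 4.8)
The plan is to carry out the change of variables directly and then do a standard linear analysis at the origin of the new system. First I would substitute $(x,y)=(v/z,1/z)$ into system~\eqref{sism}. Writing $x=v/z$, $y=1/z$ and differentiating with respect to the original time $t$, one gets $\dot x = (\dot v z - v\dot z)/z^2$ and $\dot y = -\dot z/z^2$. I would then replace $\dot x$ and $\dot y$ by the right-hand sides $y^3-x^3 = (1-v^3)/z^3$ and $-x+my^5 = -v/z+m/z^5$, solve the resulting two equations for $\dot v$ and $\dot z$, and clear denominators. The factor of $1/z^4$ that appears in the denominators is exactly what the time rescaling $dt/d\tau = 1/z^4$ is designed to absorb; after multiplying through one should land precisely on
\[
v' = -mv+(1-v^3)z^2+v^2z^4,\qquad z'=-mz+vz^5 .
\]
This part is a routine but slightly error-prone computation; the main care is in correctly inverting the $2\times 2$ system for $(\dot v,\dot z)$ and in tracking the powers of $z$ so that the rescaling produces a polynomial vector field with no remaining denominators.

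Next I would identify the critical points on the line $z=0$. Setting $z=0$ in the two right-hand sides gives $v'=-mv$ and $z'=0$, so on $z=0$ the only equilibrium is $v=0$, i.e.\ the origin of the $(v,z)$-plane. I would emphasize that $m>0$ here (as stipulated in the surrounding discussion), so $-m\neq 0$ and there are no spurious equilibria with $v\neq0$ on $z=0$.

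Finally, to determine the type of this critical point I would linearize~\eqref{CartaVZ} at $(v,z)=(0,0)$. All the nonlinear terms are of degree at least two (they carry factors $z^2$, $z^4$, or $vz^5$), so the Jacobian at the origin is simply the diagonal matrix $\operatorname{diag}(-m,-m)$, whose eigenvalues are both $-m$. Since $m>0$, both eigenvalues are real, negative, and equal, so the origin is a (stable) node; being an attracting node for the rescaled flow is then immediate. I would add a brief remark that the sign of the time rescaling factor $1/z^4$ is positive, so the rescaling preserves orientation of orbits near $z=0$ and the attracting character carries over to the original time. The expected main obstacle is purely bookkeeping in the first step: getting the algebra of the substitution and the denominator clearing exactly right so that the stated polynomial form emerges; once the system~\eqref{CartaVZ} is in hand, the equilibrium count and the nodal classification are elementary.
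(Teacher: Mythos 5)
Your proposal is correct and is precisely the routine computation the paper has in mind: the paper states only that ``the proof of the above result is straightforward,'' and your substitution, denominator clearing (multiplying by $z^4$, which is what the stated time change effects), and linearization giving the Jacobian $\operatorname{diag}(-m,-m)$ with $m>0$ is exactly that straightforward argument. The only point worth noting is that no $2\times2$ inversion is really needed, since $\dot z=-z^2\dot y$ and $\dot v=z\dot x+x\dot z$ follow directly, but this does not affect correctness.
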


The proof of the above result is straightforward.

\begin{lemma}
By using the transformation $(x,y)=\left(1/z,u/z\right)$ and the
change of time $dt/d{\tau}=1/{z^4}$ system \eqref{sism} is
transformed into the system
\begin{equation}\label{CartaUZ}
\left\{\begin{array}{l}
u'=(u-z^2)z^2+u^4(mu-z^2),\\
z'=(1-u^3)z^3,
\end{array}\right.
\end{equation}
where the prime denotes the derivative with respect to $\tau$. The
origin is the unique critical point of \eqref{CartaUZ} on $z=0$ and
it is a repeller.
\end{lemma}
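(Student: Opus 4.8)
The plan is to verify the two claims by direct computation. First I would substitute the transformation $(x,y)=(1/z,u/z)$ into system \eqref{sism}, being careful with the chain rule. Since $x=1/z$, differentiating gives $\dot x = -\dot z/z^2$, and since $y=u/z$, we get $\dot y=(\dot u z - u\dot z)/z^2$. Using the original equations $\dot x=y^3-x^3=(u^3-1)/z^3$ and $\dot y=-x+my^5=-1/z+mu^5/z^5$, I would solve for $\dot z$ and $\dot u$. From the first equation, $-\dot z/z^2=(u^3-1)/z^3$, so $\dot z=(1-u^3)/z$. Substituting into the expression for $\dot y$ then yields $\dot u$ after clearing denominators. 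Applying the time rescaling $dt/d\tau=1/z^4$ (so that $u'=z^4\dot u$ and $z'=z^4\dot z$) should clear all the negative powers of $z$ and produce exactly the polynomial system \eqref{CartaUZ}. This is routine bookkeeping, and the main care needed is tracking the powers of $z$ so that the change of time lands precisely on a polynomial vector field.

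Once \eqref{CartaUZ} is established, I would analyze its critical points on the line $z=0$. Setting $z=0$ in the right-hand sides, the second equation $z'=(1-u^3)z^3$ vanishes identically, while the first becomes $u'=mu^5$. For $m\neq 0$ this forces $u=0$, so the origin $(u,z)=(0,0)$ is the unique critical point on $z=0$, as claimed. The remaining task is to determine its character. The linearization at the origin is degenerate: writing the lowest-order terms, $u'=u z^2+\cdots$ and $z'=z^3+\cdots$, both components vanish to order higher than one, so the Jacobian at the origin is the zero matrix and the point is highly degenerate. This means a linear stability analysis is insufficient, and I expect this to be the main obstacle.

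To resolve the degenerate case I would use a blow-up or, more simply, a direct invariant-region argument exploiting the structure of the equations. Observe that $z=0$ is an invariant line (since $z'$ is divisible by $z^3$), and near the origin with $z>0$ small and $u$ small we have $z'=(1-u^3)z^3>0$, so $z$ is increasing; this shows trajectories move away from $z=0$ into the disc, which is the signature of a repeller at the infinite critical point (recall that the Poincar\'e disc boundary corresponds to $z=0$, and the change of time $dt/d\tau=1/z^4>0$ preserves orientation). To make this rigorous I would exhibit an appropriate Lyapunov-type function or examine the characteristic directions via the standard directional blow-up $u=z\,\tilde u$ (or $z=u\,\tilde z$), reducing to a system whose linear part is nondegenerate along the exceptional divisor, and then read off the hyperbolic sectors. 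The sign $z'>0$ for $z>0$ near the origin is the key qualitative fact forcing the repeller conclusion, and the same computation applied to the companion chart \eqref{CartaVZ}, where one finds $z'=-mz+\cdots$ with $m>0$ giving $z'<0$ and hence an attracting behavior, is consistent with the two infinite points being antipodal and of opposite stability. The careful part will be confirming that the blow-up yields no additional critical points bifurcating off the origin and that all hyperbolic/parabolic sectors assemble into a genuine repeller.
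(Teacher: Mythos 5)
The verification of the transformed system \eqref{CartaUZ} and of the uniqueness of the critical point on $z=0$ are fine (your reading of the time change as multiplication of the vector field by the positive factor $z^4$ is the intended one, and setting $z=0$ indeed leaves $u'=mu^5$, so $u=0$ for $m\neq 0$). The gap is in the only substantive part of the lemma: the repeller claim. The sign condition $z'=(1-u^3)z^3>0$ for small $z>0$ is \emph{not} by itself ``the signature of a repeller.'' For instance, the system $u'=-u^3$, $z'=z^3$ satisfies exactly the same condition, yet its origin is not a repeller, since the orbits on the invariant line $z=0$ tend to it in forward time; the point has the topological type of a (degenerate) saddle. The sign of $z'$ only shows that no orbit off the line $z=0$ can reach the origin in forward time. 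To get a repeller you must in addition control the $u$-component: that the flow on the line, $u'=mu^5$, repels, and that on the lateral sides $u=\pm\epsilon$ of a small half-box the flow points strictly outward for all $0\le z\le\epsilon$ (this uses the explicit form $u'=mu^5+uz^2-u^4z^2-z^4$). Only with those extra facts can one run an honest trapping-region argument: the half-box is negatively invariant, $z$ decreases monotonically in backward time, and a limit-set argument (using that the only compact invariant subset of the bottom edge is the origin) forces backward convergence to the origin, excluding hyperbolic sectors. None of this appears in your write-up; you assert the conclusion and defer the rigor elsewhere.

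Moreover, the blow-up you defer to is not the one-step reduction you anticipate, and it is exactly what the paper has to do. In the $z$-directional chart ($r=u/z$) one obtains system with $\dot r=-1+mzr^5$, $\dot z=1-z^3r^3$, which has \emph{no} critical points on the exceptional line, so nothing can be ``read off'' there; in the $u$-directional chart ($q=z/u$) the origin of the resulting system again has identically vanishing linear part, so a \emph{second} blow-up is required, and only at that stage does a hyperbolic point appear (a saddle with eigenvalues $m$ and $-2m$). The repeller conclusion then comes from reassembling the intermediate portraits, keeping track of the quadrant swaps and of the time reversals introduced by the odd rescaling factors $u^3$ and $u$ on the half-planes where they are negative. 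Since neither the invariant-region argument (correctly completed) nor this blow-up cascade is actually carried out, the heart of the lemma remains unproven in your proposal. A final side remark: your consistency check with chart \eqref{CartaVZ} is misplaced — its origin is the pair of infinite points in the $y$-direction, not the antipode of the point studied here (antipodal points are covered by the \emph{same} chart through $z>0$ and $z<0$), and for a field of odd degree antipodal infinite points have the same, not opposite, stability.
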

\begin{proof}

From the expression of \eqref{CartaUZ} it is clear that the origin
is its unique critical point on $z=0$. For determining its nature we
will use the directional blow-up since the linear part of the system
at this point vanishes identically; see again \cite{ALGM}.

We apply the $z$-directional blow-up given by the transformation
$r={u}/{z}$, $z=z$. Performing it, together with the change of time
$dt/d\tau ={z^3}$, system \eqref{CartaUZ} is transformed into
\begin{equation}
\left\{\begin{array}{l}\label{sisrz}
\dot r=-1+mzr^5,\\
\dot z=1-z^3r^3.
\end{array}\right.
\end{equation}
System \eqref{sisrz} has no critical points on $z=0$. Then by using the
transformation $(u,z)=(rz,z)$ we can obtain the phase portrait of system
\eqref{sisrz}. Recall that the mapping swaps the third and fourth quadrants in the
$z$-directional blow-up. In addition, taking into account the change of time
$dt/d\tau =z^3$, it follows that the vector field in the third and fourth quadrant
of the plane $(u,z)$ has the opposite direction to the one obtained in the
$(r,z)$-plane.

Next, we need to perform the $u$-directional blow-up for knowing the
phase portrait in such  direction. After that, by joining the
information about the blow-ups in both directions, we will have the
phase portrait of system \eqref{CartaUZ}.

The $u$-directional blow-up is given by the transformation $u=u$,
$q={z}/{u}$, and with the change of time $dt/d\tau ={u^3}$, system
\eqref{CartaUZ} is transformed into
\begin{equation}
\left\{\begin{array}{l}\label{sisuq}
\dot u=-q^2(uq^2-1)-u^2(uq^2-m),\\
\dot q=q^5-muq.
\end{array}\right.
\end{equation}
On $q=0$,  the origin is the unique critical point of the system,
and since the linear part of the system at this point vanishes
identically we have to use again some directional blow-ups.

Since the lower degree term of $\dot qu-\dot u q$ is
$-q(2mu^2+q^2)$, and it only vanishes on the direction $q=0$,  to
study the origin of system \eqref{sisuq} it suffices to consider the
$u$-directional blow-up. It is given by the transformation $u=u$,
$s={q}/{u}$. Doing the change of time $dt/d\tau={u},$ system
\eqref{sisuq} becomes
\begin{equation}\label{sissu}
\left\{\begin{array}{l}
\dot u=-us^2(u^3s^2-1)-u(u^3s^2-m),\\
\dot s=s^3(u^3-1)+2s(u^3s^4-m).
\end{array}\right.
\end{equation}
For $s=0$, system \eqref{sissu} has a unique critical point at the
origin. The linearization matrix at the origin has eigenvalues $m$
and $-2m$. Thus the origin of system \eqref{sissu} is a saddle.

Then by using the transformation $(u,q)=(u,su)$ we can obtain the
phase portrait of system \eqref{sisuq}. Recall that the mapping
swaps the second and the third quadrants in the $u$-directional
blow-up. In addition, taking into account the change of time
$dt/d\tau =u$ it follows that the vector field in the second and
third quadrants of the plane $(u,q)$ has the opposite direction to
the one in the $(u,s)$-plane. Once we have the phase portrait in the
$(u,q)$-plane, we apply the transformation $(u,z)=(u,qu)$.

By considering the properties of the blow-up technic and from the
analysis of all the intermediate phase portraits we obtain that the
origin of system \eqref{CartaUZ} is a repeller.
\end{proof}

Recall that the finite critical points are two hyperbolic saddles at
$(\pm m^{-1/4},\pm m^{-1/4})$ and  a monodromic nilpotent
singularity $(0,0)$, whose stability is given in
Theorem~\ref{mteo3}. Finally notice that the vector field is
symmetric with respect to the origin.  By joining to these
properties  all the information concerning  the infinite critical
points and using the existence and uniqueness results on the number
of limit cycles and polycycles given in Theorem \ref{mteo},  we
obtain the global phase portraits of system \eqref{sism} given in
Figure \ref{Sphere}.

\subsection{Some  Bendixson-Dulac type  criteria}\label{ss:dulac}

Next statement is a  Bendixson-Dulac type result, that mixes the
Bendixson-Dulac Test  given in the classical book \cite[Thm.
31]{ALGM} and the  one given in \cite[Prop. 2.2]{Ga-Gi-2}. It is
adapted to our interests. Similar results appear in
\cite{che,Ga-Gi,Llo,Ya}.

\begin{proposition}[Bendixson-Dulac Criterion]\label{GenBenDul}
Let $X=(P,Q)$ be the vector field associated to the
$C^1$-differential system
\begin{equation}\label{sisX}
\left\{\begin{array}{lll}
\dot{x}=P(x,y),\\
\dot{y}=Q(x,y),
\end{array}\right.
\end{equation}
and let $\mathcal{U}\subset\R^2$ be an open region  with boundary
formed by finitely many algebraic curves. Assume that there exists a
rational function $V(x,y)$ and $k\in \R^+$ such that
\begin{equation}\label{M}
M= M_{\{V,k\}}(x,y)=\langle\nabla V,X\rangle-k V \Div(X)
\end{equation}
does not change sign in $\mathcal{U}$ and $M$ only vanishes on
points, or curves that are not invariant by the flow of $X$. Then,

\begin{itemize}
\item[(I)] If all the connected components of
 $\mathcal{U}\setminus\{V=0\}$ are
 simple connected then the system  has neither periodic orbits nor
 polycycles.

\item[(II)] If all the connected components of
 $\mathcal{U}\setminus\{V=0\}$ are
 simple connected, except  one, say \,$\widetilde{\mathcal U}$, that  is
1-connected, then, either the system  has neither periodic orbits nor polycycles or
it has at most one of them in $\mathcal{U}$. Moreover, when it has a limit cycle,
it is hyperbolic,  is contained in $\widetilde{\mathcal U}$, and its stability is
given by the sign of $-VM$ on~\,$\widetilde{\mathcal U}$.
\end{itemize}

\end{proposition}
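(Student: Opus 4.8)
The plan is to prove this Bendixson--Dulac type criterion by reducing it to a divergence computation and applying the divergence (Green's/Stokes') theorem, following the classical pattern. Let me sketch the idea.

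**The key algebraic identity.** First I would introduce the auxiliary vector field $Y = V^{-k} X$ (equivalently $Y = (V^{-k}P, V^{-k}Q)$ where this is defined, i.e. on $\mathcal U\setminus\{V=0\}$) and compute its divergence. The motivating observation is that $M = \langle\nabla V, X\rangle - kV\Div(X)$ is, up to a positive factor, the divergence of a suitably weighted field. Indeed, a direct computation gives
$$
\Div(V^{-k}X) = V^{-k}\Div(X) - k V^{-k-1}\langle\nabla V, X\rangle = -k V^{-k-1} M.
$$
So $\Div(V^{-k}X)$ has the opposite sign to $V^{-k-1}M$ wherever $V\neq 0$ and $k>0$. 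This is the heart of the matter: the hypothesis that $M$ does not change sign translates into a sign condition on the divergence of $V^{-k}X$ on each component of $\mathcal U\setminus\{V=0\}$.

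**Part (I): nonexistence.** The strategy is a proof by contradiction in each simply connected component of $\mathcal U\setminus\{V=0\}$. Suppose there were a periodic orbit $\gamma$ or a polycycle; since it is a connected invariant curve and $\{V=0\}$ is (by the sign-invariance argument on $M$, together with the fact that $M$ only vanishes on non-invariant sets) not crossed transversally in a way that would let an orbit jump components, I would argue $\gamma$ must lie entirely within the closure of a single component $\Omega$ where $V$ has a fixed sign. Then $\gamma$ bounds a region $\Omega_\gamma\subset\Omega$ (here simple connectedness of $\Omega$ is what guarantees the interior of $\gamma$ stays inside the same component, so $V^{-k}$ is smooth there). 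Applying the divergence theorem to $V^{-k}X$ over $\Omega_\gamma$,
$$
\iint_{\Omega_\gamma}\Div(V^{-k}X)\,dx\,dy = \oint_{\gamma} V^{-k}\,(P\,dy - Q\,dx) = 0,
$$
the last equality because $X$ is tangent to $\gamma$ so the integrand vanishes along the orbit. But the left-hand integrand has a strict constant sign (off the measure-zero, non-invariant vanishing locus of $M$), giving a nonzero integral — a contradiction. For the polycycle case I would note that the return map and the fact that the separatrices are orbits make the boundary integral vanish in the same way.

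**Part (II): uniqueness, hyperbolicity, and stability.** On the single $1$-connected component $\widetilde{\mathcal U}$ (an annular region, which is where $V^{-k}X$ is only defined up to one hole), a periodic orbit is homotopically nontrivial and need not bound a region inside $\widetilde{\mathcal U}$, so the contradiction argument fails and one cycle can survive. To show \emph{at most one} I would assume two nested periodic orbits $\gamma_1,\gamma_2$ and integrate $\Div(V^{-k}X)$ over the annulus between them; both boundary integrals vanish, forcing the strictly-signed double integral to be zero — contradiction. Hyperbolicity follows by relating the characteristic exponent $\int_0^{T}\Div(X)(\varphi(t))\,dt$ along the cycle to the sign of $M$: along $\gamma$ one has $V^{-k}\Div(X) = -kV^{-k-1}\langle\nabla V,X\rangle + \ldots$, and since $\langle\nabla V,X\rangle$ integrates against $dt$ to something controlled by $M$ of one sign, the exponent is nonzero. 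Concretely, the stability is read off from the sign of $-VM$ on $\widetilde{\mathcal U}$ because the sign of the integrated divergence (hence attracting vs.\ repelling) matches that expression.

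**Main obstacle.** The routine part is the divergence identity and the Green's theorem applications. The delicate points I would spend the most care on are: (a) justifying that an orbit or polycycle cannot straddle two components of $\mathcal U\setminus\{V=0\}$ — this uses the clause that $M$ vanishes only on non-invariant curves, so $\{V=0\}$ cannot contain an orbit arc, and an orbit meeting $\{V=0\}$ must cross it transversally, which would contradict $V^{-k}$ changing sign while the orbit integral stays balanced; and (b) handling the singularities of $V^{-k}$ on $\{V=0\}$ rigorously when a cycle approaches that set — one typically argues that a cycle cannot touch $\{V=0\}$ at all under these hypotheses, so $V^{-k}X$ is genuinely $C^1$ on a neighborhood of the orbit and its interior. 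Making (a) and (b) airtight, rather than the calculus, is the real work.
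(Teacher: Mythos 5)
Your overall strategy is exactly the paper's: convert $M$ into the divergence of a weighted vector field, apply Green's theorem on simply connected components for non-existence, use the annulus argument for uniqueness, and relate the characteristic exponent to $M$ for hyperbolicity. However, your key identity is false, and since everything hangs on it this is a genuine gap. For $Y=V^{-k}X$ one has
\begin{equation*}
\Div(V^{-k}X)=-kV^{-k-1}\langle\nabla V,X\rangle+V^{-k}\Div(X)
=-kV^{-k-1}\Bigl(\langle\nabla V,X\rangle-\tfrac{1}{k}V\Div(X)\Bigr),
\end{equation*}
which is $-kV^{-k-1}M_{\{V,1/k\}}$, \emph{not} $-kV^{-k-1}M_{\{V,k\}}$; the two agree only when $k=1$ (or when $\Div(X)\equiv0$). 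So the sign hypothesis on $M=M_{\{V,k\}}$ gives no control whatsoever on $\Div(V^{-k}X)$. The correct weight has exponent $-1/k$: the paper takes $g=|V|^{-1/k}$ and obtains $\divv(gX)=-\frac1k\operatorname{sign}(V)|V|^{-\frac{k+1}{k}}M$. The absolute value is not cosmetic either: for non-integer $1/k$ and $V<0$ the power $V^{-1/k}$ is not real-valued, and the $\operatorname{sign}(V)$ factor is precisely what makes the stability in (II) come out as the sign of $-VM$. This is a one-line fix, but as written your identity, and hence both Green's theorem arguments, fail for the values the paper actually uses ($k=1/3$ and $k=2/3$).

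Second, your hyperbolicity step is not a proof. The phrase ``$\langle\nabla V,X\rangle$ integrates against $dt$ to something controlled by $M$ of one sign'' has no precise content. The actual mechanism is the identity $\frac{M}{V}=\frac{\langle\nabla V,X\rangle}{V}-k\Div(X)$ together with the observation that along the cycle $\frac{\langle\nabla V,X\rangle}{V}(\gamma(t))=\frac{d}{dt}\ln|V(\gamma(t))|$, which integrates to zero over a period because $\Gamma\cap\{V=0\}=\emptyset$. This yields $\int_0^T\frac{M}{V}(\gamma(t))\,dt=-k\,h(\Gamma)$, whence $h(\Gamma)\neq0$ (the cycle is not contained in $\{M=0\}$), and the stability is read from the sign of $M/V$, equivalently of $VM$, on $\widetilde{\mathcal U}$. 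Your points (a) and (b), that periodic orbits and polycycles cannot meet $\{V=0\}$ because $M|_{\{V=0\}}=\langle\nabla V,X\rangle|_{\{V=0\}}$ has a sign and $\{M=0\}$ contains no invariant curves, are correct and coincide with the paper's argument.
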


\begin{proof}
Consider the Dulac function $g(x,y)=|V(x,y)|^{-1/k}$. Then
\begin{align*}
\divv(gX)&= \frac{\partial g}{\partial x}P+\frac{\partial
g}{\partial y}Q +g( \frac{\partial P}{\partial x}+ \frac{\partial
Q}{\partial y} )= \langle \nabla g,X \rangle
+g\divv(X)\\
&= -\frac1k\operatorname{sign}(V)|V|^{-\frac{k+1}{k}}\left(\langle \nabla V,X
\rangle -k
     V\divv (X)\right)\\
 &= -\frac1k\operatorname{sign}(V)|V|^{-\frac{k+1}{k}} M_{\{V,k\}}=
 -\frac1k\operatorname{sign}(V)|V|^{-\frac{k+1}{k}} M.
\end{align*}
By the  hypotheses,  $  M|_{\{V=0\}}=\langle \nabla V,X \rangle|_{\{V=0\}}$ does
not change sign in $\mathcal U$ and there is no solution contained in $\{M=0\}.$
Therefore, neither the  periodic orbits nor the polycycles of the vector field in
$\mathcal U$ can intersect $\{V=0\}.$

For  proving (I) we follow the proof  of the   Bendixson-Dulac Criterion given
in~\cite[Thm. 31]{ALGM}. Assume, to arrive a contradiction, that the system has a
simple closed curve $\Gamma$ which is union of trajectories of the vector field.
Let $C\subset \mathcal{U}$ the bounded region with boundary $\Gamma.$ Then, by
Stokes Theorem, we have that
$$ \iint_C\divv(gX)=\int_{\Gamma}\langle gX,\bold n\rangle,$$
where $\Gamma$ is oriented in the suitable way.
 Note that the right hand-side term in this equality is
zero because $gX$ is tangent to the curve $\Gamma$ and  the left one is non-zero by
our hypothesis. This fact leads to the desired contradiction.

In case (II), applying  a similar argument to the region bounded by   two possible
simple closed curves formed by trajectories of the vector field, we arrive again to
a contradiction.

To end the proof, let us show the hyperbolicity of the possible
limit cycle~$\Gamma$. Write $\Gamma=\{\gamma(t):=(x(t),y(t)),
t\in[0,T]\}\subset \widetilde U,$ where $T$ is its period, and its
characteristic exponent as $h(\Gamma)=\int_0^T \divv\left(
X(\gamma(t))\right)\,dt.$ We need to prove that $h(\Gamma)\ne0$ and
that its sign coincides with the sign of $-VM$ on $\widetilde U.$ We
know that
$$ \frac{M}{V}= \frac{\langle \nabla V,X \rangle}{V}-k \divv(X).$$
Remember that $\Gamma\cap\{V=0\}=\emptyset.$ Evaluating this last equality on
$\gamma$ and integrating between 0 and $T$ we obtain that
\begin{align}\label{bd}
\int_0^T \frac{M}{V}(\gamma(t) )\,dt =& \int_0^T \frac{\langle \nabla V,X
\rangle}{V}(\gamma(t))  \,dt-k \int_0^T\divv(X)(\gamma(t))\,dt\nonumber\\=& \ln|
V(\gamma(t))|\Big|_{t=0}^{t=T} -k \,h(\Gamma)= -k\, h(\Gamma).
\end{align}
Therefore, the result follows.
\end{proof}

Next result is an straightforward consequence of the above proposition. It notices
that when we construct a suitable Dulac function, the same method provides an
effective estimation of the basin of attraction of the attracting critical points.

\begin{corollary}\label{co:basin} Assume that we are under the hypotheses
of the above theorem and moreover that $\{V(x,y)=0\}$ has an oval
such that it and the bounded region surrounded by it, say $\mathcal
W$,  are contained in $\mathcal U$. Then, if the differential system
has only a critical point ${\bf p}$ in $\mathcal{W}$, and it is an
attractor,  then $\mathcal W$ is contained in the basin of
attraction of~$\bf p$.
\end{corollary}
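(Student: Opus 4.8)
The goal is to show that every trajectory starting in $\mathcal W$ converges to $\mathbf p$. The plan is to exploit the Dulac function $g=|V|^{-1/k}$ already constructed in the proof of Proposition~\ref{GenBenDul}, together with the topological structure imposed by the oval. First I would observe that, since the oval of $\{V=0\}$ bounds the region $\mathcal W\subset\mathcal U$ and $M$ does not change sign on $\mathcal U$ while vanishing only on non-invariant sets, the computation $\divv(gX)=-\frac1k\operatorname{sign}(V)|V|^{-\frac{k+1}{k}}M$ from the proposition shows that $\divv(gX)$ has a constant sign on $\mathcal W$ (up to a measure-zero set), namely the sign of $-\operatorname{sign}(V)\,M$. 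Thus $\mathcal W$ is a simply connected region on which $gX$ has divergence of one strict sign, so by the Bendixson–Dulac argument already invoked, there can be no periodic orbit or polycycle of $X$ inside $\mathcal W$.

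Next I would argue that $\mathcal W$ is positively invariant. The key point is that the boundary oval is a component of $\{V=0\}$, and on $\{V=0\}$ we have $M|_{\{V=0\}}=\langle\nabla V,X\rangle$ of constant sign throughout $\mathcal U$; this fixes the sign of $\langle\nabla V,X\rangle$ along the oval, which tells us whether the flow crosses the level curve $\{V=0\}$ inward or outward. Since $\mathbf p$ is an attractor and the orientation is forced to be consistent around the whole oval (it cannot reverse, as $\langle\nabla V,X\rangle$ does not change sign), the flow is transverse to the oval pointing into $\mathcal W$, so no trajectory can leave $\mathcal W$ in forward time. Hence for any $\mathbf x\in\mathcal W$ the forward orbit stays in the compact set $\overline{\mathcal W}$.

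Finally I would close the argument with the Poincaré–Bendixson theorem. The forward orbit of $\mathbf x\in\mathcal W$ is bounded, so its $\omega$-limit set is a nonempty, compact, connected invariant subset of $\overline{\mathcal W}$. By the first step there are no periodic orbits or polycycles in $\mathcal W$, and by hypothesis the only critical point in $\mathcal W$ is $\mathbf p$; since $\mathbf p$ is an attractor it has no homoclinic connection. The Poincaré–Bendixson trichotomy then forces the $\omega$-limit set to be exactly $\{\mathbf p\}$, giving $\lim_{t\to+\infty}\varphi(t,\mathbf x)=\mathbf p$, i.e. $\mathcal W\subset\mathcal W^s_{\mathbf p}$.

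I expect the main obstacle to be the invariance step: one must verify carefully that the constant sign of $\langle\nabla V,X\rangle$ on the oval yields \emph{inward} (not outward) transversality, using that $\mathbf p$ is an attractor inside, and one must handle the points where $M$ (equivalently $\langle\nabla V,X\rangle$) vanishes on the oval, which requires that these vanishing points lie on non-invariant curves so that the trajectory still enters $\mathcal W$ rather than sliding along the boundary.
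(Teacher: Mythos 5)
Your three-step plan (Dulac function to exclude closed invariant curves inside $\mathcal W$, transversality of the oval to get positive invariance, then Poincar\'e--Bendixson) is exactly the argument the paper has in mind: the paper gives no written proof at all, simply declaring the corollary a straightforward consequence of Proposition~\ref{GenBenDul}, and your outline is the natural way to make that precise. However, the invariance step --- which you yourself flag as the main obstacle --- is asserted rather than proved, and as written the assertion is not a valid deduction. The constant sign of $M|_{\{V=0\}}=\langle\nabla V,X\rangle|_{\{V=0\}}$ only says the oval is crossed coherently; whether the crossing is \emph{inward} or \emph{outward} is decided by the relative signs of $M$ and of $V$ inside the oval (inward when $\operatorname{sign}(V)\,M\le0$ on $\mathcal W$, outward when $\operatorname{sign}(V)\,M\ge0$), and nothing in the hypotheses fixes this combination a priori; attractor-ness of $\mathbf p$ has no local influence on a distant level curve. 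The attractor hypothesis actually enters through a contradiction argument that you need to supply: if the crossing were outward, $\overline{\mathcal W}$ would be negatively invariant, so for $q\in\mathcal W\setminus\{\mathbf p\}$ the backward orbit stays in the compact set $\overline{\mathcal W}$ and $\alpha(q)$ is a nonempty compact invariant set; by Poincar\'e--Bendixson it is a periodic orbit, a graphic, or contains a critical point; the first two are excluded by your first step (they cannot meet $\{V=0\}$, by the proposition's proof), so $\mathbf p\in\alpha(q)$, which is impossible because an asymptotically stable point cannot belong to the $\alpha$-limit set of any orbit other than itself. This contradiction is what forces the inward direction.

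Two smaller points. First, your treatment of tangency points (where $M$ vanishes on the oval) is also incomplete: non-invariance of $\{M=0\}$ prevents an orbit from sliding along the boundary, but it does not by itself exclude that an orbit touches the oval and exits. A clean fix is quantitative: along an orbit, $v(t)=V(\gamma(t))$ satisfies $\dot v=M(\gamma(t))+k\,v(t)\divv X(\gamma(t))$, so if, say, $V<0$ on $\mathcal W$ and $M\le0$ on $\mathcal U$, a Gronwall comparison shows that $v$ cannot become positive after vanishing; hence no orbit leaves $\overline{\mathcal W}$, tangencies included. Second, your first step silently assumes that $\operatorname{sign}(V)$ is constant on $\mathcal W$, i.e.\ that $\{V=0\}$ has no further components inside the oval; otherwise $\divv(gX)=-\frac1k\operatorname{sign}(V)|V|^{-\frac{k+1}{k}}M$ changes sign there and the Stokes argument requires the component analysis of statements (I)--(II) of the proposition. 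This hypothesis is implicit in the corollary as stated, and it holds in the paper's application (where $V$ is quadratic in $x$ and $\{V=0\}$ has a single oval), but your write-up should make it explicit.
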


Observe that when we are under the hypotheses of the above
corollary, but we already know that the system has a limit cycle in
$\mathcal U$, and $\mathcal U$ is simply connected,  then, unless
the set $\{V(x,y)=0\}$ reduces to a single point, there is no need
to assume that $\{V(x,y)=0\}$ has  an oval. The existence of the
oval is already guaranteed by the method itself.

Sometimes the hypothesis that $M$ does not change sign can be replaced for another
one, following next remark.

\begin{remark}\label{GenBenDul2}
Assume that in Proposition~\ref{GenBenDul} we can not ensure that
the function $M$, given in~\eqref{M}, keeps sign on the whole
domain~$\mathcal{U}$. Then, this hypothesis can be changed by
another one. Define  $\{M=0\}^*$ to be the subset of $\{M=0\}$
formed by curves that separate the regions $\{M>0\}$ and $\{M<0\}$.
Then, the new hypothesis is that the set $\{M=0\}^*$  is without
contact by the flow of $X$. Then, in the conclusions of the
proposition,  the connected components of
$\mathcal{U}\setminus\{V=0\}$ must be replaced by the connected
components of $\mathcal{U}\setminus\big(\{V=0\}\cup \{M=0\}^*\big)$
and the same type of conclusions hold. We will use this idea in the
proof of Proposition~\ref{925}.
\end{remark}

\subsection{Zeros of 1-parameter families of polynomials}\label{ss:polin}

 As usual, for a polynomial $P(x)=a_nx^n+\cdots+a_1x+a_0,$  we write $\triangle_x(P)$ to
denote its discriminant, that is,
$$
\triangle_x(P)=(-1)^{\frac{n(n-1)}{2}}\frac{1}{a_n}\operatorname{Res}(P(x),P'(x)),
$$
where $\operatorname{Res}(P,P',x)$ is the resultant of $P$ and $P'$
with respect to $x$; see \cite{cox}.

By using the same techniques that in \cite[Lem. 8.1]{Ga-Ga-Gi}, it is not difficult
to  prove the following result that will be used in several parts of the paper.
\begin{lemma}\label{ll}
Let $ G_m(x)=g_n(m)x^n+g_{n-1}(m)x^{n-1}+\cdots+g_1(m)x+g_0(m)$ be a
family of real polynomials depending continuously on a real
parameter~$m$ and set $\Lambda_m=(c(m),d(m))$ for some continuous
functions $c(m)$ and $d(m)$. Suppose that there exists an  interval
$I\subset\R$ such that:
\begin{itemize}
\item[(i)] For  some $m_0\in I$,  $G_{m_0}$ has exactly $r$  zeros  in $\Lambda_{m_0}$
and all them are simple.


\item[(ii)] For all $m\in I$, $G_m(c(m))\cdot G_m(d(m))\ne0$.

\item[(iii)] For all $m\in I,$ $\triangle_x(G_m)\neq 0.$

\end{itemize}
Then for all $m\in I$, $G_m(x)$ has also exactly $r$  zeros in $\Lambda_m$ and all
them are simple.

\end{lemma}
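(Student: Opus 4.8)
The plan is to show that the integer-valued function $N(m)$, defined as the number of zeros of $G_m$ lying in the open interval $\Lambda_m=(c(m),d(m))$, is constant on $I$. Since $I$ is an interval, hence connected, and since $N(m_0)=r$ by hypothesis~(i), proving that $N$ is locally constant will give $N(m)=r$ for all $m\in I$. The simplicity of all these zeros is then automatic from~(iii), because a repeated real zero of $G_m$ would force $\triangle_x(G_m)=0$.

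First I would record that, near any $m_1\in I$, the real zeros of $G_m$ move continuously and remain simple. By~(iii) the polynomial $G_{m_1}$ has no multiple zeros, so every real zero $x_*$ satisfies $\partial_x G_{m_1}(x_*)\neq0$; the Implicit Function Theorem then yields a unique continuous branch $m\mapsto x(m)$ with $x(m_1)=x_*$ and $G_m(x(m))\equiv0$. Consequently the count $N(m)$ can change as $m$ passes $m_1$ only through one of three mechanisms: a zero branch reaches an endpoint, i.e. $G_{m_1}(c(m_1))=0$ or $G_{m_1}(d(m_1))=0$, which is excluded by~(ii); two zeros collide inside $\Lambda_m$, or a complex-conjugate pair becomes real, either of which creates a multiple zero and hence $\triangle_x(G_{m_1})=0$, excluded by~(iii); or a zero runs off to $\pm\infty$ through a degeneration of the leading coefficient. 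Since $c(m)$ and $d(m)$ are finite, the last mechanism cannot alter the interior count: to enter or leave the bounded interval a zero must first cross an endpoint, again forbidden by~(ii).

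The step requiring the most care is this last one, since the Implicit Function Theorem only provides local branches and does not by itself control the global reorganization of zeros when $g_n(m)$ might vanish. The cleanest resolution is to emphasize that $N(m)$ can only change through the boundary $\{c(m),d(m)\}$ or through a coincidence of zeros, both already excluded, so that events ``at infinity'' are irrelevant to the bounded interval $\Lambda_m$. Alternatively, one may observe that the factor $1/a_n$ in the definition of $\triangle_x$, together with hypothesis~(iii), forces $g_n(m)\neq0$ on $I$, so the degree of $G_m$ stays equal to $n$ and no zero escapes to infinity at all. Either way, $N$ is locally constant on the connected set $I$, whence $N\equiv r$, and the simplicity of the zeros follows from~(iii) as noted above, completing the proof.
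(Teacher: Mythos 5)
Your proof is correct and follows essentially the same route as the paper's own argument (which is only sketched there): view the zeros of $G_m$ as continuous functions of $m$, with hypothesis (ii) preventing any zero from crossing the endpoints $c(m)$, $d(m)$ and hypothesis (iii) preventing collisions or the birth of real zeros from complex pairs, so that the count is locally constant on the connected set $I$. Your explicit handling of the possible degeneration of the leading coefficient actually goes a bit beyond the paper's sketch, which does not address that point at all.
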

 The  idea of the proof consists in
looking at the roots of $G$ as continuous functions  of $m$. The
hypothesis (ii) prevents that some real roots of $G_m$ passes,
varying $m$, trough the boundary of $\Lambda_m$. The hypothesis
(iii) forbids, that varying $m$, appears some multiple root of
$G_m$.

Notice that the above result transforms the control of the zeros of
a function depending on two variables, $x$ and $m$, into three
problems of only one variable, the one of item (i) with the variable
$x$ and the two remainder ones with the variable $m$. If the
dependence on $m$ is also polynomial, and the polynomial has
rational coefficients, then these three simpler questions can be
solved by applying the well-known Sturm method. As we will see in
the proof of Proposition \ref{uniq}, this approach can also extended
when the one variable polynomial has some irrational coefficients.

\subsection{Transformation into an Abel equation}

System \eqref{sism} can be seen as the sum of two quasi-homogeneous
vector fields, see \cite{cgp}. It is known that in many cases these
systems can be transformed into Abel equations. We  get:

\begin{proposition}
The periodic orbits of system \eqref{sism} correspond to positive
$T$-periodic solutions of the Abel equation
\begin{equation}\label{abel}
\frac{d\rho}{d\theta}=\alpha(\theta)\rho^3+\beta(\theta)\rho^2,
\end{equation}
where
$$\alpha(\theta)=3\Cs(\theta)\Sn(\theta)\left(2m\Cs^{4}(\theta)+\Sn^2(\theta)\right)
\left(m\Cs^{8}(\theta)-\Sn^4(\theta)\right)
$$
and
$$
\beta(\theta)=5m\Cs^{8}(\theta)-4\Sn^4(\theta)+(3-10m)\Cs^{4}(\theta)\Sn^2(\theta),
$$
being $\Sn$ and $\Cs$  the functions introduced in Subsection
\ref{ss:monod} and $T$ their period.
\end{proposition}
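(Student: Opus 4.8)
The plan is to transform system~\eqref{sism} into an Abel equation through the Lyapunov generalized polar coordinates introduced in Subsection~\ref{ss:monod}, specialized to the quasi-homogeneous structure of the system. Since the dominant quasi-homogeneous part of \eqref{sism} matches a $(1,2)$-weight (indeed $\dot x=y^3-x^3$ and $\dot y=-x+my^5$ have the relevant weighted structure), I would set $p=1$, $q=2$ and write $x=r\,\Cs(\theta)$, $y=r^2\,\Sn(\theta)$, using the $(1,2)$-trigonometric functions $\Sn$ and $\Cs$ with period $T$. The first step is simply to substitute this change of variables into the formulas~\eqref{cam} for $\dot r$ and $\dot\theta$, compute $\dot x$ and $\dot y$ from \eqref{sism}, and collect powers of $r$.

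The second step is to form $\frac{dr}{d\theta}=\dot r/\dot\theta$ and simplify using the fundamental identity $\Cs^{4}(\theta)+2\,\Sn^2(\theta)=1$ (the case $p=1$, $q=2$ of $p\,\Cs^{2q}+q\,\Sn^{2p}=1$) together with the differentiation rules $\Cs'=-\Sn^{2p-1}=-\Sn$ and $\Sn'=\Cs^{2q-1}=\Cs^3$. After this simplification $\frac{dr}{d\theta}$ should appear as a rational expression in $r$ whose numerator and denominator are polynomials in $r$ with $\theta$-dependent coefficients; the key point is that the structure forces $\dot\theta=r\,\big(1+O(r)\big)$ so that, up to the invertible change, the equation becomes polynomial in $r$ of low degree. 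The third step is the standard Abel reduction: introduce the reciprocal-type substitution $\rho=\rho(\theta)$ (concretely $\rho=1/r$ or a closely related rational change of the radial variable) that converts the equation into the cubic form \eqref{abel}. Under such a substitution a first-order equation rational in the radial variable with the right degree balance turns into $\frac{d\rho}{d\theta}=\alpha(\theta)\rho^3+\beta(\theta)\rho^2$, and matching coefficients identifies $\alpha$ and $\beta$ with the stated trigonometric expressions. Finally, I would observe that periodic orbits of~\eqref{sism} surrounding the origin correspond bijectively to solutions $r(\theta)$ that are $T$-periodic and strictly positive, hence to positive $T$-periodic solutions $\rho(\theta)$ of~\eqref{abel}; the positivity is exactly the condition $r>0$, i.e. staying off the origin, and the $T$-periodicity is inherited from closing up after one turn $\theta\mapsto\theta+T$.

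The main obstacle I expect is the bookkeeping in the second step: verifying that after substitution the messy numerator and denominator collapse, via the identities for $\Sn$ and $\Cs$, into precisely the factored forms
$$
\alpha(\theta)=3\,\Cs(\theta)\Sn(\theta)\big(2m\Cs^{4}(\theta)+\Sn^2(\theta)\big)\big(m\Cs^{8}(\theta)-\Sn^4(\theta)\big)
$$
and
$$
\beta(\theta)=5m\Cs^{8}(\theta)-4\Sn^4(\theta)+(3-10m)\Cs^{4}(\theta)\Sn^2(\theta).
$$
In particular, I would watch for the factor $\big(m\Cs^{8}-\Sn^4\big)$ appearing in $\alpha$, which is exactly the leading radial coefficient $R_4(\theta)$ seen in the proof of Theorem~\ref{mteo3}, and the factor $\big(2m\Cs^4+\Sn^2\big)$, which should arise from the denominator $\dot\theta$. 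The delicate part is ensuring the denominator of $dr/d\theta$ is nonvanishing along the relevant region so that the Abel reduction is legitimate and the correspondence between periodic orbits and positive periodic solutions is genuinely bijective; away from the origin in the monodromic region $\dot\theta\neq0$, which secures this. The remaining simplifications are routine, if lengthy, applications of the quasi-homogeneous trigonometric identities.
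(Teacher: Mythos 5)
Your overall plan (quasi-homogeneous polar coordinates followed by a rational change of the radial variable) is the same as the paper's, but two concrete points fail, one minor and one essential. The minor one: for system \eqref{sism} the correct assignment is $y=r\Cs(\theta)$, $x=r^{2}\Sn(\theta)$ — equivalently, swap $(x,y)\to(y,x)$ first, as the paper does in Section~\ref{ss:monod}, so that the dominant part becomes $\dot x=-y$, $\dot y=x^{3}$, matching the defining Cauchy problem of $\Cs,\Sn$ with $p=1$, $q=2$. With your assignment $x=r\Cs$, $y=r^{2}\Sn$ one gets
\[
\dot\theta=\frac{x\dot y-2y\dot x}{r^{3}}=-\frac{\Cs^{2}(\theta)}{r}+O(r^{2}),
\]
which blows up at the origin and vanishes on $\Cs=0$, so your key structural claim $\dot\theta=r\big(1+O(r)\big)$ is false as written. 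After the swap one does obtain
\[
\frac{dr}{d\theta}=\frac{\big(m\Cs^{8}(\theta)-\Sn^{4}(\theta)\big)\,r^{4}}
{1-r^{3}\Sn(\theta)\Cs(\theta)\big(\Sn^{2}(\theta)+2m\Cs^{4}(\theta)\big)}
=\frac{A(\theta)r^{4}}{1-B(\theta)r^{3}}.
\]

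The essential gap is the third step. Your concrete proposal $\rho=1/r$ does not give an Abel equation: it yields $d\rho/d\theta=-A\rho/(\rho^{3}-B)$, which is rational, not cubic, in $\rho$; note also that $dr/d\theta$ above is rational in $r$, never polynomial, so no ``low degree polynomial'' equation appears before the right substitution. The entire content of the paper's (one-line) proof is the identification of that substitution, namely the Cherkas transformation
\[
\rho=\frac{r^{3}}{1-r^{3}\Sn(\theta)\Cs(\theta)\big(\Sn^{2}(\theta)+2m\Cs^{4}(\theta)\big)}=\frac{r^{3}}{1-B(\theta)r^{3}}.
\]
Using $r^{3}=\rho/(1+B\rho)$, a short computation gives exactly
\[
\frac{d\rho}{d\theta}=3A(\theta)B(\theta)\rho^{3}+\big(3A(\theta)+B'(\theta)\big)\rho^{2},
\]
so $\alpha=3AB$ and $\beta=3A+B'$, which recover the stated formulas via $\Cs'=-\Sn$, $\Sn'=\Cs^{3}$. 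Writing ``$1/r$ or a closely related rational change \dots\ matching coefficients identifies $\alpha$ and $\beta$'' assumes precisely the object that has to be produced, so the proof is incomplete at its crucial point. What you do handle correctly — and it is also the final ingredient of the paper's argument — is the role of $\dot\theta\neq0$: periodic orbits do not meet $\{\dot\theta=0\}$, i.e. $1-B(\theta)r^{3}\neq0$ along them, so the transformation is well defined there and periodic orbits correspond to positive $T$-periodic solutions of \eqref{abel}.
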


\begin{proof} The result follows by applying the Cherkas
transformation
\[
\rho=\frac{r^3}{1-r^3\Sn(\theta)\Cs(\theta)\left(\Sn^2(\theta)+2m\Cs^4(\theta)\right)},
\]
to the expression of system \eqref{sism} in the quasi-homogeneous
polar coordinates introduced in Section \ref{ss:monod}. It is used
that the periodic orbits of the system do not intersect the curve
$\dot\theta=0$, and therefore the above transformation is
well-defined over them, see \cite{cgp}.
\end{proof}

Using the above expression it is not difficult to reproduce the
proof of the existence of the Hopf-like bifurcation given in
Subsection~\ref{ss:monod}. Unfortunately, although
expression~\eqref{abel} looks quite simple, the results about the
number of limit cycles of Abel equations that we know are not
applicable to \eqref{abel}.

\section{Non-existence of limit cycles for
$m\in(0,9/25)\cup(3/5,\infty)$}\label{se:ne1}

In this section we prove the non-existence results of periodic
orbits already given in \cite{Ga-Go} and extend them to the
non-existence of polycycles. Our proof is different and based on the
Bendixson-Dulac theorem and other classical tools. We study
separately each interval.

\begin{proposition}\label{nc} For $m\in(0,9/25]$,
system \eqref{sism}  has neither periodic orbits nor polycycles.
\end{proposition}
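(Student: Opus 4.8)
The plan is to apply the Bendixson-Dulac Criterion (Proposition~\ref{GenBenDul}, part~(I)) on the whole plane $\mathcal{U}=\R^2$. The essential task is to construct a rational function $V(x,y)$ and a constant $k\in\R^+$ such that the associated function $M=M_{\{V,k\}}=\langle\nabla V,X\rangle - kV\Div(X)$ keeps a constant sign on $\R^2$ and vanishes only on non-invariant sets, while at the same time $\R^2\setminus\{V=0\}$ has only simply connected components. For system~\eqref{sism} one computes $\Div(X)=\partial_x(y^3-x^3)+\partial_y(-x+my^5)=-3x^2+5my^4$, which is the crucial quantity the Dulac function must control. The natural first attempt, exploiting the symmetry of the problem and the successful choice in Proposition~\ref{mteo2}, is to take a polynomial $V$ that is even in $x$ of low degree, for instance $V(x,y)=a(y)x^2+b(y)x+c(y)$ with polynomial coefficients to be determined, and to fix a convenient value of $k$ (the computations elsewhere in the paper suggest $k$ a small rational).

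First I would substitute this ansatz into~\eqref{M} and expand $M$ as a polynomial in $x$ whose coefficients are functions of $y$ (and of $m$). The requirement that $M$ does not change sign on $\R^2$ translates into sign conditions on these $y$-coefficients; the cleanest way to guarantee a constant sign is to force $M$ to be, say, a sum of terms each of one sign, or to make it a perfect-square-type expression times a manifestly signed factor. I would choose the free coefficients of $V$ so that the $x$-odd terms of $M$ cancel and the remaining even part is a nonpositive (or nonnegative) polynomial for all $y$ when $m\in(0,9/25]$. The endpoint $m=9/25$ should appear naturally as the boundary value at which the relevant discriminant or leading coefficient degenerates, which is exactly why the interval closes there.

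The second component of the argument is topological: I must verify that the zero set $\{V=0\}$ cuts the plane into simply connected pieces, so that conclusion~(I) rather than~(II) applies and one obtains the stronger statement forbidding both periodic orbits and polycycles. With $V$ quadratic in $x$, the curve $\{V=0\}$ is (for each $y$) at most two points in $x$, so $\{V=0\}$ consists of at most two graphs over the $y$-axis, and the complementary regions are strips that are simply connected; I would check this explicitly from the explicit $V$. I must also confirm that $\{M=0\}$ contains no invariant curve of the flow, which in practice follows because $M$ vanishes only on isolated curves transverse to $X$ or only at the critical points, and that $V$ does not itself vanish along a trajectory.

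The main obstacle I expect is the construction step: finding a single $V$ (with $k$) for which $M$ provably keeps one sign across the entire plane and uniformly for all $m\in(0,9/25]$. This is a genuine algebraic search, and the sign analysis of the resulting $y$-polynomial — likely reducible to showing a one-variable polynomial in $y^2$ (or $y^4$) is positive semidefinite, which one can settle via its discriminant or by Lemma~\ref{ll}-style arguments treating $m$ as a parameter — is where all the real work lies. Once such a $V$ is exhibited, the rest of the proof is the routine verification of the hypotheses of Proposition~\ref{GenBenDul}(I), and the non-existence of periodic orbits and polycycles on $(0,9/25]$ follows immediately.
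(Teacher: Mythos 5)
Your overall instinct (a Bendixson--Dulac argument with a $V$ quadratic in $x$, with $m=9/25$ showing up as the degeneration of a discriminant) matches the paper's machinery, but the logical skeleton you propose --- part (I) of Proposition~\ref{GenBenDul} on $\mathcal U=\R^2$ --- cannot work as stated, and the step you use to justify it is false. The pair that actually succeeds on this interval is $V(x,y)=2x^2+y^4$, with $k=2/3$ on $(0,3/10]$, where $M=\frac23\left((3-10m)x^2+my^4\right)y^4\ge0$, and with $k=K(m)=8(11m+R)/(10m+3)^2$, $R=\sqrt{m(1-4m)(25m-9)}$, on $(1/4,9/25]$, where $M$ becomes a perfect square (note that $R$ degenerates exactly at $m=9/25$, confirming your guess about the endpoint; note also that two different $k$'s are needed to cover the interval). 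For this $V$ the set $\{V=0\}$ is the single point at the origin, so $\R^2\setminus\{V=0\}$ is the punctured plane, which is $1$-connected, not simply connected: only part (II) applies, and it yields ``at most one periodic orbit or polycycle'', not non-existence. Your claim that a $V$ quadratic in $x$ has a zero set consisting of graphs over the $y$-axis, with strip-like simply connected complement, is incorrect: such a $V$ can vanish at a single point, as above, or along an oval --- indeed the $V_2$ of Propositions~\ref{uniq} and~\ref{mteo2} is quadratic in $x$ and its zero set has an oval around the origin, which is precisely what is exploited there. Since every periodic orbit or polycycle of \eqref{sism} must surround the origin (index argument, see Lemma~\ref{remcuad}), the singularity of the Dulac function $|V|^{-1/k}$ sits inside any closed invariant curve, and the Stokes argument behind part (I) breaks down exactly there.

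The missing half of the proof is dynamical, and it is how the paper concludes. For $m\in(0,9/25]$ the origin is an attractor (Theorem~\ref{mteo3}, since $m<3/5$). The Dulac computation is then used not to forbid closed orbits outright but to determine their stability: along any periodic orbit $\gamma$ of period $T$, one has $\int_0^T\divv X(\gamma(t))\,dt=-\frac1k\int_0^T (M/V)(\gamma(t))\,dt<0$ because $V\ge0$ and $M\ge0$, so every periodic orbit would be hyperbolic and attracting; an attracting limit cycle surrounding an attracting origin is impossible (Poincar\'e--Bendixson would force a non-attracting closed invariant curve in between), hence there are none. Moreover, part (II) gives no stability information for polycycles, so they require a separate argument: the paper computes the ratio $\rho(\Gamma)$ of the saddle eigenvalues at the corners, shows a polycycle would be attracting precisely when $m<9/25$, and rules it out again by Poincar\'e--Bendixson against the attracting origin and the already established absence of periodic orbits; the borderline case $m=9/25$ is then treated with the pair $(V,K(9/25))$. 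Without this stability layer (or some replacement, such as a Stokes argument on the annulus between the closed invariant curve and a small circle around the attracting origin), your plan cannot reach the stated conclusion even if you produce a sign-definite $M$.
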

\begin{proof}
Recall that for $m\in(0,9/25]$ the origin is attractor. Therefore if
we prove that any periodic orbit $\Gamma$ of the system is also
attractor we will have proved that the system has  not periodic
orbits. In order to prove the stability of the limit cycle we need
to compute $\int_0^{T} \Div\left(X(\gamma(t))\right)\,dt$, where
$\gamma(t):=(x(t),y(t))$ is the time parametrization of $\Gamma$,
and $T=T(\Gamma)$ its period.

From equation \eqref{M}, for any function $V$ such that
$\{V(x,y)=0\}\cap \Gamma=\emptyset$, we have
$$
\Div(X)=\frac{M_{\{V,k\}}-\langle\nabla V,X\rangle}{-k V}.
$$
Hence,
$$
\int_0^{T}\Div\left(X(\gamma(t))\right)\,dt=-\int_0^{T}\frac{M_{\{V,k\}}(\gamma(t))}{k
V(\gamma(t))}dt,
$$
where we have followed similar computations that in~\eqref{bd}. Then the stability
of $\Gamma$ is given  by the sign of $-MV$. If we show that for $m\in(0,9/25]$
there exist a non-negative $V$ and $k\in\R^+$, such that
 its corresponding $M$ is non-negative, then we will have proven that the limit
cycle is hyperbolic and attractor.

By considering $V(x,y)=2x^2+y^4$ and $k=2/3$ equation \eqref{M}
becomes
$$
M_{\{V,\frac{2}{3}\}}=\frac{2}{3}\left((3-10m)x^2+my^4\right)y^4,
$$
which clearly  is non-negative on $\R^2$ for $m\in(0,3/10]$.

If we use the same $V(x,y)$ that in previous case, but
$k=K(m)=8(11m+R)/(10m+3)^2$, with $R=\sqrt{m(1-4m)(25m-9)}$ then we
have {\small $$ M_{\{V,K(m)\}}=\left(\frac{2
}{3+10m}\left(\frac{(m+R)(11m+R)}{m}\right)^{1/2}x^2
+\frac{2(3-10m)}{3+10m}\left(\frac{m(11m+R)}{(m+R)}\right)^{1/2}
y^4\right)^2,
$$}
hence $M_{\{V,K(m)\}}$ is non-negative on $\R^2$ for
$m\in(1/4,9/25]$.

Therefore system \eqref{sism} has no limit cycles for $m\in(0,9/25]$ as we wanted
to show.

To prove the non-existence of polycycles for $m\in(0,9/25)$ we use a
different approach. Following \cite{So}, we can associate to each
polycycle $\Gamma$, with $k$ hyperbolic saddles at its corners, the
 number $
 \rho(\Gamma)=\prod_{i=1}^k {b_i}/{a_i},
$ where $-a_i<0<b_i,$ $i=1,\ldots k,$ are the eigenvalues at the
saddles. Then, $\Gamma$ is stable (respectively, unstable) if
$\rho(\Gamma)<1$ (respectively, $\rho(\Gamma)>1$). In our case
\[\rho(\Gamma)=\frac { \left( 5\sqrt {m}-3+\sqrt {25m+18 \sqrt
{m}+9} \right) ^{4}}{48^2{m}}.\] Then, easy computations show that
the polycycle is an attractor if $m<9/25$ and a repeller if
$m>9/25$. Assume, to arrive to a contradiction, that for $m<9/25$
the polycycle exists. Then  both, the polycycle and the origin,
would be attractors. Applying the Poincar\'{e}-Bendixson Theorem we
could ensure that the system would have at least one periodic orbit
between them. This result  is in contradiction with the first part
of the proof, where the non-existence of periodic orbits is
established.

It only remains to show that for $m=9/25$ the polycycle neither
exists. To prove this fact we could study the stability of the
polycycle showing that if it exists it would be  attractor, arriving
again to a contradiction. Nevertheless it is easier to apply
Proposition~\ref{GenBenDul} with the $V$ and $k=K(9/25)$ used to
prove the non-existence of periodic orbits. Indeed, this later
approach, taking the corresponding $V$ and $k$, could also be used
for all values of $m\in(0,9/25]$, but we have preferred to include a
proof based on the study of the stability of the limit cycle and the
polycycle.
\end{proof}

\begin{lemma}\label{LeVM}
 Let $X$ be the vector field associated to system
\eqref{sism}.

\begin{itemize}

\item[(i)] If  we take $k=1/3$ and $V_1(x,y)=g_0(y)+g_1(y)x$ where
$g_0(y)=g_1'(y)$ and $g_1(y)$ a solution of the second order linear
ordinary differential equation
\begin{equation}\label{aku}
-g_1''(y)+my^5g_1'(y)-\frac{5}{3}my^4g_1(y)=0,
\end{equation}
then \eqref{M} reduces to the function
\begin{equation}\label{Maku}
M_1:=M_{\left\{V_1,\frac{1}{3}\right\}}(x,y)
=\frac{1}{3}\,{y}^{3}\left(3m{y}^{2}g_1''(y)-5myg_1'(y)+3\,g_1(y)\right).
\end{equation}

\item[(ii)] If we take $k=2/3$ and $V_2(x,y)=g_0(y)+g_1(y)x+g_2(y)x^2$,
with
\begin{align}\label{g0g1}
g_1(y)&=g_2'(y),\nonumber\\
g_0(y)&=(1/2)g_2''(y)-(1/2)my^5g_2'(y)+(5/3)my^4g_2(y),
\end{align}
then \eqref{M} becomes
\begin{align}\label{Mode3}
M_2&:=M_{\left\{V_2,\frac{2}{3}\right\}}(x,y)=\Big(-\frac{1}{2}g'''_2(y)
+\frac{3}{2}m{y}^{5}g''_2(y)-\frac{5}{2}m{y}^{4}g'_2(y)
+\frac{2}{3}(3-10m){y}^{3}g_2(y)\Big)x\nonumber
\\&+\frac{1}{18}{y}^{3}\left(9m{y}^{2}g'''_2(y)
-m(30+9my^6)yg''_2(y)+3(6+5{m}^{2}{y}^{6})g'_2(y)
+20{m}^{2}{y}^{5}g_2(y)\right).
\end{align}

\end{itemize}

\end{lemma}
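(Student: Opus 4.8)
For system~\eqref{sism} we have $X=(P,Q)=(y^3-x^3,\,-x+my^5)$, so that $\Div(X)=P_x+Q_y=-3x^2+5my^4$. In each case the proposed function $V$ is a polynomial in $x$ whose coefficients depend on $y$, hence by the definition~\eqref{M} the function $M=\langle\nabla V,X\rangle-kV\Div(X)$ is again a polynomial in $x$ with $y$-dependent coefficients. The choices of the $g_i$ are engineered precisely to annihilate the top-degree coefficients in $x$, leaving only the low-degree remainder that is claimed. The plan is therefore a direct computation whose structure is dictated by the powers of $x$: expand $M$, group it by powers of $x$, and verify the intended cancellations one coefficient at a time.

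For part~(i), with $V_1=g_0(y)+g_1(y)x$ and $k=1/3$, I would first note that the $x^3$ terms cancel identically, since $\langle\nabla V_1,X\rangle$ produces $-g_1(y)x^3$ while $-kV_1\Div(X)$ produces $+g_1(y)x^3$. The coefficient of $x^2$ reduces to $g_0(y)-g_1'(y)$, which vanishes thanks to $g_0=g_1'$, and the coefficient of $x$ equals $-g_1''(y)+my^5g_1'(y)-\tfrac{5}{3}my^4g_1(y)$, that is, the left-hand side of~\eqref{aku}; so it vanishes under the hypothesis that $g_1$ solves that equation. Collecting the surviving $x$-independent term then yields exactly~\eqref{Maku}.

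For part~(ii), with $V_2=g_0+g_1x+g_2x^2$ and $k=2/3$, the same bookkeeping applies, but now $M$ is a priori of degree four in $x$. I would check successively that the $x^4$ terms cancel automatically, that the $x^3$ coefficient reduces to $g_1-g_2'$ and hence vanishes by $g_1=g_2'$, and that the $x^2$ coefficient vanishes after substituting the prescribed $g_0$ from~\eqref{g0g1}; here one uses $2g_0=g_2''-my^5g_2'+\tfrac{10}{3}my^4g_2$ so that the quadratic terms telescope. Reading off the remaining coefficients of $x^1$ and $x^0$ should then reproduce the two lines of~\eqref{Mode3}. The linear coefficient collapses to $-\tfrac{1}{2}g_2'''+\tfrac{3}{2}my^5g_2''-\tfrac{5}{2}my^4g_2'+\tfrac{2}{3}(3-10m)y^3g_2$ after one uses the explicit form of $g_0'$; computing the constant term requires differentiating $g_0$, which already involves $g_2,g_2',g_2''$, so that $g_0'$ brings in $g_2'''$ together with a spread of monomials in $y$ of degrees $3$ through $10$.

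The only genuine obstacle is the bookkeeping in part~(ii): matching the constant-in-$x$ coefficient of~\eqref{Mode3} means assembling many contributions and grouping them by which derivative of $g_2$ they multiply, with several $m^2$-terms that must combine to the stated rational numbers. For instance, the $y^9g_2'$ contributions are $-\tfrac{5m^2}{2}+\tfrac{5m^2}{3}+\tfrac{5m^2}{3}=\tfrac{5m^2}{6}$, matching $\tfrac{1}{18}\cdot3\cdot5m^2$. None of this is conceptually hard, but it is error-prone and is most safely confirmed with a symbolic-algebra check; once the cancellation pattern in the powers of $x$ is recognized, everything else is immediate.
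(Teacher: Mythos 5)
Your proposal is correct and follows essentially the same route as the paper: expand $M=\langle\nabla V,X\rangle-kV\Div(X)$ as a polynomial in $x$, observe that the top-degree coefficient cancels because of the choice of $k$ ($1/3$ resp. $2/3$), kill the next coefficients with $g_0=g_1'$ and the ODE~\eqref{aku} in case (i), and with $g_1=g_2'$ and the prescribed $g_0$ of~\eqref{g0g1} in case (ii), then read off the surviving terms. Your coefficient bookkeeping (e.g.\ the $y^9g_2'$ check giving $\tfrac56 m^2$) matches the paper's stated formulas, so nothing is missing.
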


\begin{proof}
(i) If  $V_1(x,y)=g_0(y)+g_1(y)\,x$ and $k=1/3$, then
\begin{align*}
M_1=&\langle\nabla V_1,X\rangle-\frac{1}{3}\Div(X)V_1\\
=&\left(g_0(y)-g_1'(y)\right)x^2+\Big(-g_0'(y)+my^5g_1'(y)-\frac{5}{3}my^4g_1(y)\Big)x\\
 &+\frac{1}{3}y^3\left(3mg_0'(y)y^2-5mg_0(y)y+3g_1(y)\right).
\end{align*}
By choosing $g_0(y)=g_1'(y)$ the coefficient of $x^2$ in $M_{1}$
vanishes, and we obtain {\small
\begin{align*}
M_{1} =&\left(-g_1''(y)+my^5g_1'(y)-\frac{5}{3}my^4g_1(y)\right)x
+\frac{1}{3}y^3\left(3mg_1''(y)y^2-5myg_1'(y)+3g_1(y)\right).
\end{align*}}
Finally, if $g_1(y)$ is a  solution of \eqref{aku} we get
\eqref{Maku}.

\smallskip\smallskip
(ii) If $k=2/3$ and $V_2(x,y)=g_0(y)+g_1(y)x+g_2(y)x^2$, then
\begin{align*}
M_{2}=&\langle\nabla V_2,X\rangle-\frac{2}{3}\Div(X)V_2\\
=&(g_1(y)-g_2'(y))x^3+\Big(my^5g_2'(y)-\frac{10}{3}my^4g_2(y)
-g_1'(y)+2g_0(y)\Big)x^2+\Big(2y^3g_2(y)\\
&+my^5g_1'(y)-\frac{10}{3}my^4g_1(y)-g_0'(y)\Big)x
+\frac{1}{3}y^3\big(3g_1(y)+3my^2g_0'(y)-10myg_0(y)\big).
\end{align*}
By choosing $g_1(y)=g_2'(y)$ and
$g_0(y)=(1/2)g_2''(y)-(1/2)my^5g_2'(y)+(5/3)my^4g_2(y)$ the
coefficients of $x^2$ and $x^3$ in $M_{2}$ vanish. Then we
have~\eqref{Mode3}.
\end{proof}
\begin{remark}
Notice that if  $g_2(y)$ is a solution of the linear ordinary
differential equation
\begin{equation}\label{ode3}
-\frac{1}{2}g'''_2(y)+\frac{3}{2}m{y}^{5}g''_2(y)-\frac{5}{2}m{y}^{4}g'_2(y)
+\frac{2}{3}(3-10m){y}^{3}g_2(y)=0,
\end{equation}
then \eqref{M} reduces to a function depending only of the variable
$y$.
\end{remark}

\begin{proposition}\label{35} For
$m\in[3/5,\infty)$, system \eqref{sism} has neither periodic orbits
nor polycycles.
\end{proposition}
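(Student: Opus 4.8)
The plan is to apply the Bendixson--Dulac Criterion of Proposition~\ref{GenBenDul}, part~(I), with $\mathcal{U}=\R^2$, using the family of Dulac functions constructed in Lemma~\ref{LeVM}. Since for $m\ge 3/5$ the origin is a repeller (Theorem~\ref{mteo3}) while one of the critical points at infinity is an attracting node, the expected conclusion is outright non-existence of both periodic orbits and polycycles, so I would aim at verifying the hypotheses of part~(I) directly rather than arguing through stability.

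First I would work with the linear-in-$x$ function $V_1(x,y)=g_1'(y)+g_1(y)x$ and $k=1/3$ from Lemma~\ref{LeVM}(i). Choosing $g_1$ to be a solution of \eqref{aku} forces the coefficient of $x$ in $M_1$ to vanish, so that by \eqref{Maku} the test function $M_1=\tfrac13 y^3\bigl(3my^2g_1''(y)-5myg_1'(y)+3g_1(y)\bigr)$ depends only on $y$. Then $\{M_1=0\}$ is a union of horizontal lines, none of which is invariant (on $y=0$ one has $\dot y=-x\not\equiv 0$), while $\{V_1=0\}$ is the graph $x=-g_1'(y)/g_1(y)$, which splits the plane into simply connected pieces provided the zeros of $g_1$ are controlled. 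The symmetry $(x,y)\mapsto(-x,-y)$ of \eqref{sism} suggests taking $g_1$ odd, so that the bracket in $M_1$ is odd and the factor $y^3$ renders $M_1$ sign-definite.

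The main obstacle is that \eqref{aku}, after the substitution $t=y^6$, becomes a confluent hypergeometric (Kummer) equation, so its solutions are not elementary and establishing the global sign of $M_1$ is delicate. To bypass this I would instead take $g_1$, or, via Lemma~\ref{LeVM}(ii), the quadratic-in-$x$ function $V_2$ with $g_2$, to be an explicit polynomial, so that the corresponding $M$ is an honest polynomial in $(x,y)$. Sign-definiteness over the \emph{unbounded} parameter range $m\in[3/5,\infty)$ is the hard part, and I expect to handle it in two regimes: a compact interval $[3/5,m_1]$, where Lemma~\ref{ll} together with Sturm/discriminant computations certifies that the relevant coefficient polynomials keep their sign, and a large-$m$ regime, where a dominant-term rescaling estimate shows that the leading behaviour controls the sign. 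When the $x$-coefficient does not vanish identically and $M$ genuinely changes sign, I would invoke Remark~\ref{GenBenDul2}, replacing the sign condition by the requirement that the separating curves $\{M=0\}^*$ be without contact, which amounts to checking that $\langle\nabla M,X\rangle$ is sign-definite along them.

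Finally, once Proposition~\ref{GenBenDul}(I) yields the absence of periodic orbits, the polycycle is excluded by the very same application, since simple-connectivity of the components of $\R^2\setminus\{V=0\}$ rules out polycycles in part~(I). As a cross-check one may also compare the product of saddle ratios $\rho(\Gamma)$ with $1$ and apply the Poincar\'e--Bendixson theorem exactly as in the proof of Proposition~\ref{nc}, since a coexisting repelling origin and attracting polycycle (or vice versa) would force an intermediate periodic orbit, contradicting the first step.
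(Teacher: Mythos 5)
You set up exactly the paper's framework---$V_1(x,y)=g_1'(y)+g_1(y)x$, $k=1/3$, with $g_1$ solving \eqref{aku} so that $M_1$ in \eqref{Maku} depends only on $y$---but then you abandon it at precisely the point where it succeeds. The paper's proof does use the non-elementary solution: after $z=my^6/6$ turns \eqref{aku} into a Kummer equation, it takes the explicit series solution $g_1(y)=\left(\frac{m}{6}\right)^{1/6}y\sum_{j\ge0}\frac{(-\frac19)_j}{(\frac76)_j}\left(\frac{m}{6}\right)^j\frac{y^{6j}}{j!}$, convergent on all of $\R$, and substitutes it into \eqref{Maku}. The sign of $M_1$ is then read off term by term: the leading term is $\frac{3-5m}{3}\left(\frac{m}{6}\right)^{1/6}y^4$, which is $\le 0$ exactly when $m\ge 3/5$, and every higher term carries the factor $\left(-\frac19\right)_j\big(m(6j+1)(18j-5)+3\big)<0$ for $j\ge1$ and $m>0$. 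Hence $M_1\le 0$ on all of $\R^2$, vanishing only on the non-invariant line $y=0$, and Proposition~\ref{GenBenDul} applies uniformly for every $m\ge 3/5$. The ``delicate global sign'' you feared is a one-line verification, and it is exactly where the threshold $3/5$ emerges. (Incidentally, your parity remark is off: taking $g_1$ odd makes $y^3(\cdots)$ an \emph{even} function of $y$, not a sign-definite one; the definiteness comes from the coefficient analysis above, not from symmetry.)

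The substitute you propose---polynomial truncations plus Sturm certificates plus a large-$m$ regime---has genuine gaps that the series approach is designed to avoid. First, with a polynomial $g$ the coefficient $\phi(y)$ of $x$ in $M$ does not vanish identically, so $M$, being linear in $x$, necessarily changes sign on $\R^2$; you would be forced to abandon $\mathcal{U}=\R^2$ and work on a bounded region such as $\Omega$ of Lemma~\ref{remcuad}, with all the attendant boundary sign analysis (this is what Propositions~\ref{uniq} and~\ref{925} must do), none of which is sketched. Second, and more seriously, Lemma~\ref{ll} and the Sturm method certify signs over \emph{compact} parameter intervals with polynomial dependence on the parameter; your range $[3/5,\infty)$ is non-compact, and the proposed ``dominant-term rescaling'' for large $m$ is unsubstantiated: in $M$ different powers of $y$ carry different powers of $m$, so which term dominates depends on the point, and there is no reason a fixed finite truncation remains sign-definite, even on $\Omega$, as $m\to\infty$. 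Your final paragraph is fine as far as it goes---part (I) excludes polycycles together with periodic orbits, and the saddle-ratio/Poincar\'e--Bendixson cross-check is consistent with the proof of Proposition~\ref{nc}, since for $m\ge 3/5$ both the origin and any polycycle would be repellers---but it rests on the non-existence of periodic orbits, which your plan does not actually deliver.
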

\begin{proof} We want to apply Proposition~\ref{GenBenDul}, taking $k=1/3$
and  $V_1(x,y)=g_0(y)+g_1(y)x$, with $g_0$ and $g_1$ as in (i) of
Lemma~\ref{LeVM}. Applying the transformation $z=my^6/6$,
equation~\eqref{aku} becomes
\begin{equation*}\label{ku}
zg_1''(z)+\left(\frac{5}{6}-z\right)g_1'(z)+\frac{5}{18}g_1(z)=0,
\end{equation*}
which is a Kummer equation, see \cite[pp. 504]{AS}. A particular
solution of this equation is
$$
g_1(z)=z^{1/6}\sum_{j=0}^{\infty}\frac{(-\frac{1}{9})_j}{(\frac{7}{6})_j}\frac{z^j}{j!},
$$
where $(a)_j:=a(a+1)(a+2)\cdots(a+j-1)$ and $(a)_0=1$.
Therefore we consider
\begin{equation*}
g_1(y)=\left(\frac{m}{6}\right)^{1/6}y\sum_{j=0}^{\infty}\frac{(-\frac{1}{9})_j}{(\frac{7}{6})_j}
\left(\frac{m}{6}\right)^j\frac{y^{6j}}{j!},
\end{equation*}
which  is convergent on the whole $\R$ and satisfies \eqref{aku}.
Its  derivatives are
\begin{align*}
g_1'(y)&=\left(\frac{m}{6}\right)^{1/6}\sum_{j=0}^{\infty}\frac{(-\frac{1}{9})_j}{(\frac{7}{6})_j}
\left(\frac{m}{6}\right)^j(6j+1)\frac{y^{6j}}{j!},
\end{align*}
\begin{align*}
g_1''(y)&=\left(\frac{m}{6}\right)^{1/6}\sum_{j=0}^{\infty}\frac{(-\frac{1}{9})_j}{(\frac{7}{6})_j}
\left(\frac{m}{6}\right)^j6j(6j+1)\frac{y^{6j-1}}{j!}.
\end{align*}
Replacing  the above functions in \eqref{Maku} we obtain
\begin{align*}
M_{1}=&\left(\frac{3-5m}{3}\right)\left(\frac{m}{6}\right)^{1/6}y^4
\\&+\left(\frac{1}{3}\right)\left(\frac{m}{6}\right)^{1/6}\sum_{j=1}^{\infty}\frac{(-\frac{1}{9})_j}{(\frac{7}{6})_j}
\left(\frac{m}{6}\right)^j\left(\frac{1}{j!}\right)\big(m(6j+1)(18j-5)+3\big)y^{6j+4}.
\end{align*}
Since $(-\frac{1}{9})_j$ is negative for all $j$, it follows that
$M_{1}\le0$ for $m\geq 3/5$, and vanishes only on $y=0.$ Therefore
the result follows by applying Proposition~\ref{GenBenDul}.
\end{proof}

\section{Uniqueness and hyperbolicity of the  limit cycle for
$m\in(1/2,3/5)$}\label{se:uni}

In this section we prove that for $m\in(1/2,3/5)$,  system~\eqref{sism} has at most
 one limit cycle or one polycycle and both never coexist. Moreover, we show
 that when the limit cycle  exists, it is   hyperbolic. The
uniqueness of the limit cycle was already proved in \cite{Ga-Go}.  Our approach is
different and, like in the previous section, it is based on the construction of a
suitable Dulac function. This section ends with the proof of
Proposition~\ref{mteo2}.

\begin{lemma}\label{remcuad}
Let $\mathcal{S}$ be the open set bounded by the lines $x=\pm
m^{-1/4}$ and $y=\pm m^{-1/4}$ and let $\Omega$ be the connected
component containing the origin and bounded by the above four
straight lines  and the hyperbola $xy+1=0$, see Figure
\ref{cuadrado}. The following holds:
\begin{itemize}
\item[(i)]
The vector field $X$ associated to system \eqref{sism} is transversal to the
boundary $\partial \mathcal{S}$ of the square $\mathcal{S}$ except at the two
saddle critical points of system \eqref{sism}.
\item[(ii)] If system \eqref{sism} has a periodic orbit or a polycycle,
it must be contained in
$\Omega\subset\mathcal{S}$.
\end{itemize}
\end{lemma}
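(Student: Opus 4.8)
For part (i), the plan is a direct boundary computation that exploits the symmetry $(x,y)\mapsto(-x,-y)$ of \eqref{sism}, which interchanges opposite sides and halves the work. On the vertical sides $x=\pm m^{-1/4}$ the outward normal is horizontal, so transversality is governed by the sign of $\dot x=y^3-x^3$. On $x=m^{-1/4}$ this equals $y^3-m^{-3/4}$, which is strictly negative for $|y|<m^{-1/4}$ and vanishes only at the corner $y=m^{-1/4}$, that is, at the saddle. On the horizontal sides $y=\pm m^{-1/4}$ transversality is governed by $\dot y=-x+my^5$; on $y=m^{-1/4}$ this equals $m^{-1/4}-x$, strictly positive for $|x|<m^{-1/4}$ and zero only at the saddle $x=m^{-1/4}$. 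Thus $X$ is transversal along every open side, pointing inward on the vertical sides and outward on the horizontal ones, the only tangencies occurring at the two saddles $(\pm m^{-1/4},\pm m^{-1/4})$; at the remaining two corners $(\pm m^{-1/4},\mp m^{-1/4})$ the field is nonzero and crosses both incident edges.

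For part (ii) I would first record the index data: the only finite singularities are the origin, a monodromic point of index $+1$, and the two saddles, of index $-1$ each. By the Poincar\'e index theorem a periodic orbit encloses singularities whose indices sum to $+1$, so any periodic orbit of \eqref{sism} must surround the origin and cannot contain either saddle in its interior. Next I would use the transversality of part (i) to confine such an orbit $\gamma$ to $\overline{\mathcal S}$. If $\gamma$ left $\mathcal S$, then, surrounding the origin, it would either contain the whole square, hence both saddles, contradicting the index count, or cross $\partial\mathcal S$. It cannot cross the vertical sides outward, since the field points inward there, so any exit from $\mathcal S$ occurs through a horizontal side; following $\gamma$ after such an exit it must re-enter through a vertical side, and the exit point lies on the saddle's side of the top edge while the re-entry lies on the saddle's side of an adjacent vertical edge, so the outer excursion winds around the adjacent saddle corner and again forces that saddle into the interior of $\gamma$. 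This contradiction gives $\gamma\subset\overline{\mathcal S}$.

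To sharpen $\overline{\mathcal S}$ to $\Omega$ I would bring in the hyperbola: writing $H=xy+1$, one computes $\langle\nabla H,X\rangle=(1-m)y^4$ on $\{H=0\}$, which is strictly positive for $m\in(1/2,3/5)$, so the flow crosses the two hyperbola arcs strictly into the component $\{H>0\}$ that contains the origin. Since $\gamma$ meets $\Omega$ and cannot cross these arcs outward, it cannot enter the two cut-off corners, yielding $\gamma\subset\Omega$. The same transversal inputs confine a polycycle: its separatrix arcs cannot cross the inflow boundaries outward, an excursion through a horizontal side would again wind around a saddle, and its corners are exactly the two saddles, so it lies in $\overline\Omega$.

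The step I expect to be the main obstacle is the confinement in part (ii): because the horizontal sides are outflow, $\mathcal S$ is not invariant in either time direction, so one cannot simply quote a trapping-region argument. The exclusion of excursions through the horizontal sides must combine the index count with the winding argument above, and some care is needed at the two saddle corners, where the transversality degenerates, to verify that the winding excursion genuinely closes up around the saddle and that the index obstruction applies.
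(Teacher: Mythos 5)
Your proposal is correct and takes essentially the same route as the paper: the identical transversality computations along the sides of the square and along the hyperbola (the paper writes the latter as $(1-m)/x^4$, which equals your $(1-m)y^4$ on $xy+1=0$), followed by the same index-count argument forcing any periodic orbit or polycycle to surround only the origin and hence to lie in $\Omega$. If anything, you give more detail than the paper, whose proof of part (ii) simply asserts that, by part (i), $\Gamma$ cannot meet $\partial\mathcal{S}$; your winding argument around the saddle corners, which can be made fully rigorous by counting crossings of the four rays emanating from each saddle (each ray being an arc without contact, hence crossed at most once by a closed orbit), is precisely the justification that terse assertion requires.
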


\begin{figure}[h]
\begin{center}
\begin{tabular}{cc}
\epsfig{file=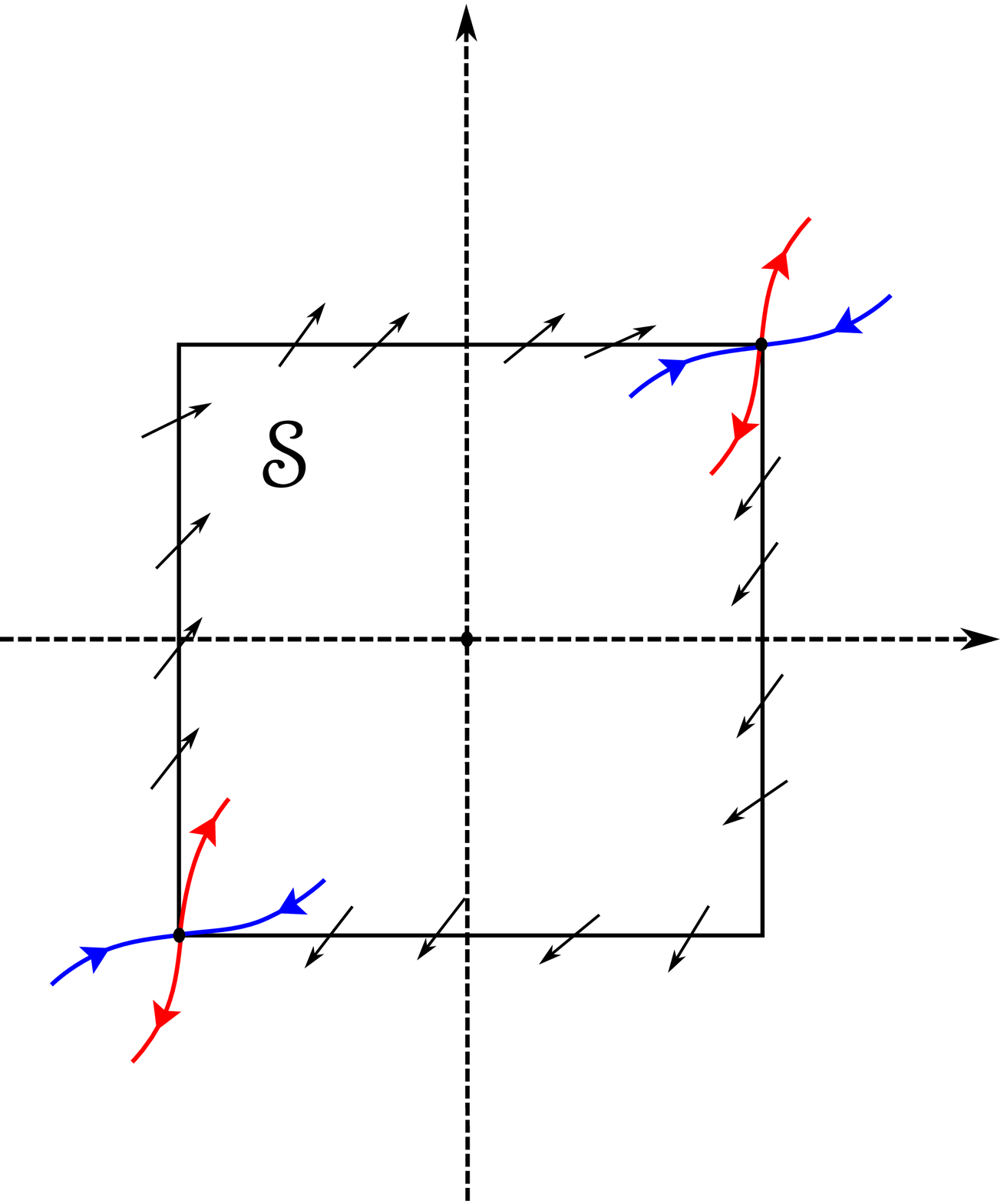,width=3.5cm}\qquad\qquad&\qquad\qquad
\epsfig{file=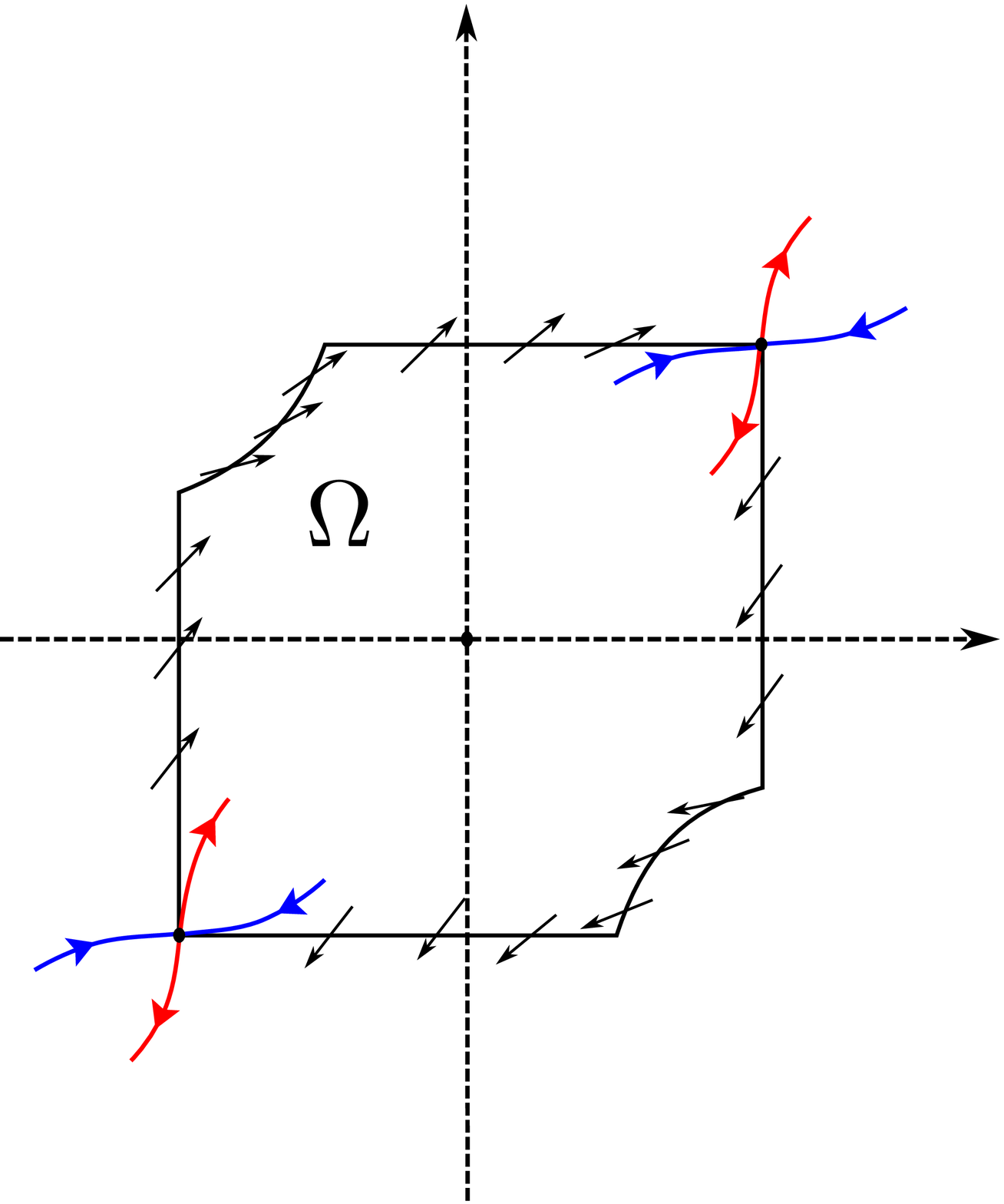,width=3.5cm}\\
(a)\qquad\qquad & \qquad\qquad (b)
\end{tabular}
\caption{Regions $\Omega$ and $\mathcal S$.} \label{cuadrado}
\end{center}
\end{figure}

\begin{proof}(i).
Consider the function $f(x,y)=x-m^{-1/4}$. It is not difficult to
see that $\langle \nabla f, X\rangle$ restricted to $x-m^{-1/4}=0$
has the expression $y^3-m^{-1/4}$ which is negative for
$y\in(-m^{-1/4},m^{-1/4})$. Analogously, we can see that the
direction of $X$ along $\partial \mathcal{S}$ is as showed in Figure
\ref{cuadrado} (a).

(ii). It is well-known that the sum of the indices of all the
singularities surrounded  by  a periodic orbit, or a polycycle is
one. Recall that the indices of the saddle points are $-1$ and the
index of a monodromic point is $+1$. Hence, if a periodic orbit or a
polycycle  $\Gamma$  exist they must surround only the origin.
Moreover, by statement~(i), $\Gamma$ cannot intersect $\partial
\mathcal{S}$. Finally, a simple computation shows that
$\langle\nabla(xy+1), X\rangle$ restricted to $xy+1=0$ is
$(1-m)/x^4$, which implies that $X$ is transversal to $xy+1=0$.
Hence $X$ is transversal to $\partial \Omega$ and the lemma follows.
\end{proof}

\begin{proposition}\label{uniq}
For $m\in[1/2,3/5)$, system~\eqref{sism}  has at most
 one limit cycle and one polycycle and both never coexist.
Moreover, when the limit cycle exists  it is hyperbolic and repeller.
\end{proposition}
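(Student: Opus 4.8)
The plan is to apply part (II) of the Bendixson--Dulac Criterion (Proposition~\ref{GenBenDul}) with $k=2/3$ and the quadratic-in-$x$ function $V_2(x,y)=g_0(y)+g_1(y)x+g_2(y)x^2$ of Lemma~\ref{LeVM}~(ii), choosing $g_2=g_{2,m}$ to be exactly the explicit polynomial displayed in Proposition~\ref{mteo2} (with $g_1,g_0$ then fixed by~\eqref{g0g1}). By Lemma~\ref{LeVM}~(ii) the resulting $M_2=M_{\{V_2,2/3\}}$ is \emph{affine} in $x$, say $M_2(x,y)=A(y)\,x+B(y)$, where $A(y)$ is the left-hand side of~\eqref{ode3} evaluated at $g_{2,m}$. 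Since $g_{2,m}$ is a high-order polynomial approximation of a solution of~\eqref{ode3}, the low-order coefficients of $A$ cancel, so that $A$ is an odd polynomial starting at a high power of $y$ and $B$ is even. As the region I take $\mathcal U=\mathcal S$, the open square of Lemma~\ref{remcuad}; by that lemma every periodic orbit or polycycle of~\eqref{sism} is confined to $\Omega\subset\mathcal S$, and $\mathcal S$ is admissible since its boundary is a union of finitely many algebraic segments.

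The heart of the argument is to show that $M_2\le 0$ on $\mathcal S$, vanishing only on the non-invariant line $\{y=0\}$. Because $M_2$ is affine in $x$ and on $\mathcal S$ the variable $x$ ranges over $(-m^{-1/4},m^{-1/4})$, for each fixed $y$ the maximum of $M_2$ over this interval is attained at an endpoint; hence $M_2\le0$ on $\mathcal S$ is \emph{equivalent} to the two one-variable inequalities $M_2(\pm m^{-1/4},y)\le 0$ for $y\in(-m^{-1/4},m^{-1/4})$. The symmetry $(x,y)\mapsto(-x,-y)$ of both the system and $M_2$ identifies these two conditions, so everything reduces to proving $P_m(y):=M_2(m^{-1/4},y)\le0$, with equality only at $y=0$ (note $A(0)=B(0)=0$, whence $M_2\equiv0$ on $\{y=0\}$, a curve without contact since there $\dot y=-x\neq0$). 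I would establish this for the whole range $m\in[1/2,3/5)$ by means of Lemma~\ref{ll}: verify the sign of $P_m$ and the simple, isolated character of its zeros at one convenient value $m_0$ (by Sturm's method), and then control, as $m$ varies, the discriminant $\triangle_y(P_m)$ together with the boundary values $P_m(\pm m^{-1/4})$.

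It remains to fix the topology and read off the stability. One checks, through the discriminant $g_1^2-4g_0g_2$ of $V_2$ in $x$ together with $g_2\neq0$, that $\{V_2=0\}\cap\mathcal S$ consists of a single oval encircling the origin (the boundary of $\mathcal U_m$ in Proposition~\ref{mteo2}). Consequently $\mathcal S\setminus\{V_2=0\}$ splits into a simply connected disc, inside the oval where $V_2<0$, and one annular, $1$-connected component $\widetilde{\mathcal U}$, between the oval and $\partial\mathcal S$ where $V_2>0$. Proposition~\ref{GenBenDul}~(II) then yields that system~\eqref{sism} has at most one periodic orbit or one polycycle, that the two cannot coexist, and that a limit cycle, if it exists, is hyperbolic and contained in $\widetilde{\mathcal U}$. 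Finally, on $\widetilde{\mathcal U}$ we have $V_2>0$ and $M_2<0$, hence $-V_2M_2>0$; by the stability statement of the proposition the characteristic exponent is positive, so the cycle is a repeller.

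The main obstacle is precisely the uniform one-variable step: proving $P_m(y)\le0$ on $(-m^{-1/4},m^{-1/4})$ for \emph{every} $m\in[1/2,3/5)$ simultaneously. This is where Lemma~\ref{ll} and the Sturm machinery are indispensable, and where the non-polynomial dependence of the coefficients on $m$ through $(75-125m)^{2/3}$ must be handled: as announced after Lemma~\ref{ll}, I would introduce the auxiliary variable $w=(75-125m)^{1/3}$ and recast $P_m$ as a genuine polynomial in $y$ and $w$ before carrying out the resultant and discriminant computations.
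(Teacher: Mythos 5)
Your proposal follows the paper's own strategy in every structural respect: the same $V_2$ built from $g_{2,m}$ through Lemma~\ref{LeVM}~(ii) with $k=2/3$, Proposition~\ref{GenBenDul}~(II) for uniqueness, non-coexistence and hyperbolicity, the sign of $-V_2M_2$ for instability, and Lemma~\ref{ll} plus Sturm computations for the one-parameter polynomial estimates. The one point where you genuinely deviate is the choice of region: you take $\mathcal U=\mathcal S$, the full square, and use affineness of $M_2$ in $x$ to reduce the sign condition to the two vertical edges. That reduction is logically correct, but the inequality it reduces to is \emph{false} for $m$ close to $3/5$, so the Sturm verification you defer to the end cannot succeed. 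Indeed, at $m=3/5$ one has $(75-125m)^{2/3}=0$ and the function $M=M_2/y^4$ collapses to $M(x,y)=-\frac{988}{12375}y^{11}x+\frac{988}{61875}y^{16}-\frac{32}{275}y^{10}$; evaluating at the square corner $(x,y)=\big((5/3)^{1/4},-(5/3)^{1/4}\big)$, where $y^{11}x=-(5/3)^3$, $y^{16}=(5/3)^4$ and $y^{10}=25\sqrt{15}/27$, gives
\[
M\Big(\big(\tfrac53\big)^{1/4},-\big(\tfrac53\big)^{1/4}\Big)=\frac{3952-864\sqrt{15}}{8019}\approx 0.0755>0.
\]
Since all coefficients depend continuously on $m$, the quantity $M_m(m^{-1/4},-m^{-1/4})$ remains positive for all $m<3/5$ sufficiently close to $3/5$ (numerically, for $m\gtrsim 0.59$; e.g.\ at $m=0.599$ it is $\approx+0.058$). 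Hence $P_m(y)=M_2(m^{-1/4},y)=y^4M(m^{-1/4},y)$ is positive for $y$ near $-m^{-1/4}$, and your claim $P_m\le0$ on $(-m^{-1/4},m^{-1/4})$ fails on a subrange of the parameter interval that, worse still, contains values of $m$ for which the limit cycle actually exists.

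This failure is exactly why the paper does not work on $\mathcal S$ but on the smaller region $\Omega$ of Lemma~\ref{remcuad}, obtained by cutting off the two corners $(\pm m^{-1/4},\mp m^{-1/4})$ with the hyperbola $xy+1=0$: the offending corner satisfies $xy=-m^{-1/2}<-1$, so it lies in $\mathcal S\setminus\overline\Omega$, where the sign of $M_2$ is irrelevant because periodic orbits and polycycles are confined to $\Omega$. Your endpoint argument can be salvaged on $\Omega$, whose horizontal sections are still intervals, but then the endpoints of those sections include points of the hyperbola, so besides $M_2(\pm m^{-1/4},y)\le0$ you must also prove $M_2(x,-1/x)\le0$ on the hyperbola arcs (the paper's polynomial $Q_3$) and control the corner points of $\partial\Omega$. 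With that modification your argument becomes, in substance, the paper's proof; without it, the key two-variable sign claim is simply not true.
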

\begin{proof}
Following statement (ii) of Lemma \ref{LeVM} we take $k=2/3$ and a
function $V_2(x,y)=g_0(y)+g_1(y)x+g_2(y)x^2$ adequate to apply
Proposition~\ref{GenBenDul} for proving the uniqueness of the limit
cycles or polycycles for system~\eqref{sism}.

We will take $g_{2}(y)$ as a truncated Taylor series at the origin of a suitable
solution of \eqref{ode3} such that the curve $\{V_2=0\}$ has an oval surrounding
the origin, and that $M_2$ does not change sign in $\Omega$. These two properties
will imply the result.

The general solution of \eqref{ode3} is the linear combination of generalized
hypergeometric functions
\begin{align}\label{general} g_2(y)=&\,
C_0\sum_{j=0}^{\infty}\frac{\left(\phi^+(m)\right)_j\left(\phi^-(m)\right)_j}
{\left(\frac{2}{3}\right)_j\left(\frac{5}{6}\right)_j}\left(\frac{m}{2}\right)^j
\frac{y^{6j}}{j!}
+C_1y\sum_{j=0}^{\infty}\frac{\left(\varphi^+(m)\right)_j\left(\varphi^-(m)\right)_j}
{\left(\frac{5}{6}\right)_j\left(\frac{7}{6}\right)_j}\left(\frac{m}{2}\right)^j
\frac{y^{6j}}{j!}\nonumber
\\&+C_2y^2\sum_{j=0}^{\infty}\frac{\left(\psi^+(m)\right)_j
\left(\psi^-(m)\right)_j}
{\left(\frac{7}{6}\right)_j\left(\frac{4}{3}\right)_j}\left(\frac{m}{2}\right)^j
\frac{y^{6j}}{j!},
\end{align}
where $ \phi^\pm(m)= \pm A(m)-2/9,$ $\varphi^\pm(m)= \pm A(m)-1/18,$
$\psi^\pm(m)= \pm A(m)+1/9, $ with $A(m)=\sqrt{(14m-3)/m}/9$.

We look for an even solution, so we take $C_1=0$. As we will
consider $C_0\ne0,$ it is not restrictive to choose $C_0=1$.
Finally, the constant $C_2=-(3/5-m)^{2/3}$ is fixed after some
previous simulations and taking into account that we already know
that at $m=3/5$ there is a Hopf-like bifurcation.

Once we have fixed the above constants, we calculate the Taylor polynomial of
degree~12 of $g_2$ at $y=0,$ $\mathcal{T}_{12}(g_2)$, obtaining
\begin{align}\label{g2final}
\mathcal{T}_{12}(g_2(y))=&\frac{1}{89100}(3-10m)(3+35m)y^{12}
-\frac{1}{6300}(75-125m)^{2/3}(3-13m)y^8\nonumber
\\&+\frac{1}{90}(3-10m)y^6-\frac{1}{25}(75-125m)^{2/3}y^2+1.
\end{align}
So,  in (ii) Lemma~\ref{LeVM}, we fix $g_2$ as
$\mathcal{T}_{12}(g_2(y))$. Then the corresponding $g_0$ and $g_1$
are given by~\eqref{g0g1}. Thus, $M_2$ is of the form
$M_2=(\phi(y)x+\psi(y))y^4$ where

\begin{align*}
\phi(y)=&{\frac {1}{9450}}\Big({\frac {7}{99}}\, \left(3-10\,m
\right) \left( 242\,m+ 3 \right) \left( 35\,m+3 \right) {y}^{11}\\
&\qquad\quad+\left( 75-125\,m \right) ^{2/3} \left( 86\,m+3 \right)
\left(
13\,m-3 \right) {y}^{7} \Big),\\
\psi(y)=&-{\frac {247}{400950}}\,{m}^{2} \left( 3- 10\,m \right)
\left( 35\,m+3 \right) {y}^{16}-{\frac {13}{4050}}\,{m} ^{2} \left(
75-125\,m \right) ^{2/3} \left( 13\,m-3 \right) {y}^{12}\\&+{ \frac
{1}{7425}}\, \left(3-10\,m \right) \left( 550\,{m}^{2}+145\,m +3
\right) {y}^{10}\\&+{\frac {2}{4725}}\, \left( 75-125\,m \right)
^{2/3 } \left(196\,{m}^{2}-45\,m -9\right){y}^{6}\\&+\frac{1}{15}
\left(3-5\,m \right) {y}^{4}-{\frac {2}{75}}\, \left( 75-125\,m
\right) ^{2/3} \left( 3-5\,m \right).
\end{align*}

The proposition follows if we prove that $M_2$ does not change sign
on the region~$\Omega$. In fact, it is sufficient to prove that
$M:=M_2/y^4$ does not change  sign on $\Omega$.

The idea is to show that $\{M=0\}$ does not intersect $\Omega$.
Since $M$ is linear in the variable $x$, $\{M=0\}$ cannot have ovals
inside $\Omega$. If $\{M=0\}$ has a component in $\Omega$, this
component would have to cross $\partial \Omega$ by continuity of the
function. Then, it suffices to see that $\{M=0\}$ does not intersect
$\partial \Omega$. Moreover, as $M$ satisfies $M(x,y)=M(-x,-y)$, it
is sufficient to study $M$ on half of $\partial \Omega$. To deal
only with polynomials we introduce the new variables $n=\sqrt[4]m$
and $s=(75-125m)^{2/3}$. Notice that $s^3=(75-125n^4)^2.$

We split the half of the boundary of $\Omega$  in four pieces:
\begin{itemize}
\item The segment $\gamma_1=\{(x,1/n)\,:\, -n< x< 1/n\},$

\item The segment $\gamma_2=\{(1/n,y)\,:\, -n< y< 1/n\},$

\item The piece of hyperbola $\gamma_3=\{(x,-1/x )\,:\, n< x< 1/n\},$

\item The corners $\gamma_4=\{(1/n,1/n ),(1/n,-n),(n,-1/n)\}$

\end{itemize}
and we have to prove that $\{M=0\}\cap \gamma_i=\emptyset$ for each
$i=1,2,3,4.$

These facts can be seen proving that for $n\in
I:=[\sqrt[4]{1/2},\sqrt[4]{3/5}\,)$,
\begin{itemize}
\item $Q_1(x,n,s):=M(x,1/n)\ne0,$ for $x\in(-n,1/n)$.

\item $Q_2(y,n,s):=M(1/n,y)\ne0,$ for $y\in(-n,1/n)$.

\item $Q_3(x,n,s):=M(x,-1/x)\ne0,$ for $x\in(n,1/n)$.

\item $M(1/n,1/n )\cdot M(1/n,-n)\cdot M(n,-1/n)\ne0.$
\end{itemize}

Lemma~\ref{ll}, with $r=0$, is a convenient tool to prove the first
three items. The proof of the last item is an straightforward
consequence of Sturm method.

 We will give  the details of the proof
that $ Q_2(y,n,s)\ne0$, which is the most elaborate case. The
remainder two cases follow similarly.

Writing $Q(y,n,s):=2806650 n Q_2(y,n,s) $ we get that
\begin{align*}
Q(y,n,s)=&
1729\,n^9(35n^4+3)(10n^4-3)y^{16}-9009\,n^9s(13n^4-3)y^{12}\\&-21(10n^4-3)
(242n^4+3)(35n^4+3)y^{11}\\
&-378n(10n^4-3)(550n^8+145n^4+3)y^{10}+297s(13n^4-3)(86n^4+3)y^7\\&+1188ns(196n^8-45n^4-9)y^6
-187110n(5n^4-3)y^4 +74844ns(5n^4-3).
\end{align*}
Looking at Lemma~\ref{ll} with $r=0$, it suffices to prove  the following three
facts:
\begin{enumerate}[(i)]
\item When $n=\sqrt[4]{1/2}\in I$,  $Q(y,n,s)\ne0$ for $y\in(-n,1/n)$.

\item For $n\in I$, $\triangle_y Q(y,n,s)\ne0$.

\item   For $n\in I$, $Q(-n,n,s)\cdot Q(1/n,n,s)\ne0$.

\end{enumerate}

Since the polynomial has no rational coefficients the proof of item
(i) requires some special tricks.  Notice that when
$n=\sqrt[4]{1/2}$ then $s=5\sqrt[3]{10}/2$. Hence,

\begin{align*}R(y):= Q\Big(y,\frac1{\sqrt[4]{2}},\frac52\sqrt[3]{10}\Big)=&{\frac {70889}{8}}
\sqrt[4]{8}\,{y}^{16}-{\frac {315315}{32}} \sqrt[4]{8}
\sqrt[3]{10}\,{y}^{12}-106764 {y}^{11}\\&-80514
\,\sqrt[4]{8}\,{y}^{10}+{ \frac {239085}{2}}
\sqrt[3]{10}\,{y}^{7}+{\frac {51975}{2}} \sqrt[4]{8}
\sqrt[3]{10}\,{y}^{6}\\&+{\frac {93555}{2}}
\sqrt[4]{8}\,{y}^{4}-{\frac { 93555}{2}} \sqrt[4]{8}\sqrt[3]{10}.
\end{align*}

%

We will prove that the above polynomial has no real roots in
$[-1,12/10]\supset(-n,1/n).$ The Sturm method gives polynomials with
huge coefficients and our computers have problems to deal with them.
We use a different approach. We know, that
\[
\underline{n}:=\dfrac{3002}{1785} <\sqrt[4]{8}<
\dfrac{37}{22}=:\overline{n}, \quad \underline{s}:=\dfrac{28}{13}
<\sqrt[3]{10}< \dfrac{265}{123}=:\overline{s},
\]
where these four rational approximations are obtained computing the continuous
fraction expansion of both irrational numbers. If we construct the polynomial, with
rational coefficients,
\begin{align*}R^+(y)=&{\frac {70889}{8}}
\overline{n}\,{y}^{16}-{\frac {315315}{32}} \underline{n}\,
\underline{s}\,{y}^{12}-106764 {y}^{11}-80514 \underline{n}\,{y}^{10}\\&+{ \frac
{239085}{2}}  \overline{s}\,{y}^{7}+{\frac {51975}{2}} \overline{n}\,
\overline{s}\,{y}^{6}+{\frac {93555}{2}} \overline{n}\,{y}^{4}-{\frac {
93555}{2}}\underline{n} \,\underline{s},
\end{align*}
it is clear that for $y\ge0,$  $R(y)<R^+(y).$ In fact,
\begin{align*} R^+(y)=&{\frac
{2622893}{176}} {y}^{16}-{\frac {2427117}{68}} {y}^{12}- 106764 {y}^{11}-{\frac
{11509668}{85}} {y}^{10}\\&+{\frac {21119175}{82 }} {y}^{7}+{\frac {15442875}{164}}
{y}^{6}+{\frac {314685}{4}} {y}^ {4}-{\frac {37446948}{221}}
\end{align*}
and, now, using the Sturm method it is quite easy to prove that
$R^+(y)<0$ for $y\in[0,12/10].$ Hence, in this interval,
$R(y)<R^+(y)<0,$ as we wanted to prove.

To study the values of $y<0$ we construct a similar upper bound,
\begin{align*}R^-(y)=&{\frac {70889}{8}}
\overline{n}\,{y}^{16}-{\frac {315315}{32}} \underline{n}\,
\underline{s}\,{y}^{12}-106764 {y}^{11}-80514 \underline{n}\,{y}^{10}\\&+{ \frac
{239085}{2}}  \underline{s}\,{y}^{7}+{\frac {51975}{2}} \overline{n}\,
\overline{s}\,{y}^{6}+{\frac {93555}{2}} \overline{n}\,{y}^{4}-{\frac {
93555}{2}}\underline{n} \,\underline{s},
\end{align*}
and applying the same method the result follows.

To prove (ii) we compute
$$
\triangle_y Q(y,n,s)=n^{42}s^3(5n^4-3)^5(35n^4+3)^3(10n^4-3)^3
P_{258}(n,s),
$$
where $P_{258}(n,s)$ is a polynomial in $n$ and $s$ of degree 258. Clearly, the
roots of the first five factors of the above discriminant  are no relevant for our
problem because the corresponding $n$ is not in $I.$ To study whether
$P_{258}(n,s)$ vanishes or not we compute
\[
\Res (P_{258}(n,s),(75-125n^4)^2-s^3,s)=(5n^4-3)^{24}{P}_{390}(n^2),
\]
where ${P}_{390}(n^2)$ is a polynomial of degree 390 in $n^2$.
Applying again the Sturm method  we get that ${P}_{390}(n^2)$ has no
significant roots for our study. Finally, the numerator of
$Q(-n,n,s)\cdot Q(1/n,n,s)$ is a polynomial in $n$ and $s$ of degree
49. Using the same trick as above we prove item (iii). In this case
the polynomial we have to deal with has degree  152 in $n$.

Therefore $\{M=0\}\cap \partial\Omega=\emptyset$ and as a consequence $\{M=0\}\cap
\Omega=\emptyset$.

Finally,  it is not difficult to see, because $V$ is quadratic in
$x,$ that the set $\{V(x,y)=0\}$ has exactly one oval surrounding
the origin. Hence, the proposition follows.
\end{proof}

\begin{proof}[Proof of Proposition~\ref{mteo2}] Notice that the  function $V$ used
in the proof of Proposition~\ref{uniq} coincides with the function $V(x,y,m)$ of
the statement of the proposition. Taking $k=2/3$ we are also under the hypotheses
of Corollary~\ref{co:basin}. Therefore the set $\mathcal{U}_m$ is contained in
$\mathcal{W}^s_{\bf 0}$, as we wanted to prove.
\end{proof}

We remak that following similar ideas that in the above proof  we
can construct bigger sets contained in $\mathcal{W}^s_{\bf 0}$. For
a  given $m$, let us denote by $ \mathcal{T}_{\ell}(g_2(x;C_2))$ the
Taylor polynomial of degree $\ell$ at $x=0$, of the
function~\eqref{general} with $C_0=1,$ $C_1=0.$ Then for each
$\ell\in\mathbb{N}$ and $C_2\in\mathbb{R}$ we can take this function
as a new seed $g_2$ for constructing the corresponding $V$ as in
(ii) of Lemma~\ref{LeVM}. Then checking that the oval contained in
$\{V=0\}$ is crossed inwards by the flow of the system, the result
follows for the function~$V$ constructed with these $\ell$
and~$C_2$.

\section{Non-existence of limit cycles and polycycles for
$m\in(9/25,0.547]$}\label{se:ne2}

This section contains  new non-existence results for system \eqref{sism}. We split
the interval into the subintervals $(9/25,1/2)$ and $[1/2,0.547]$. Recall that our
numerical study shows that the system has no limit cycles for $m<0.56011\ldots$ As
$m$ becomes closer to this bifurcation value the proof of non-existence of periodic
orbits and polycycles becomes harder.

\begin{proposition}\label{925}
For $m\in(9/25,1/2)$, system~\eqref{sism}  has neither limit cycles nor polycycles.
\end{proposition}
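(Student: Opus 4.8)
The plan is to use the Bendixson-Dulac machinery of Proposition~\ref{GenBenDul}, exactly as in the proof of Proposition~\ref{uniq}, but now with the aim of proving \emph{non-existence} rather than uniqueness. The natural strategy is to reuse the same quadratic-in-$x$ Dulac function $V_2(x,y)=g_0(y)+g_1(y)x+g_2(y)x^2$ built from statement~(ii) of Lemma~\ref{LeVM}, with $k=2/3$, taking $g_2$ to be the truncated Taylor polynomial~\eqref{g2final} (or a suitable analogue). The key difference is that for $m\in(9/25,1/2)$ I would try to arrange that the region $\mathcal U$, which by Lemma~\ref{remcuad} can be taken to be $\Omega$, decomposes so that \emph{all} connected components of $\mathcal U\setminus\{V_2=0\}$ are simply connected, placing us in case~(I) of Proposition~\ref{GenBenDul}, which directly yields non-existence of both periodic orbits and polycycles.

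The central computation is therefore to show that $M_2$, given by~\eqref{Mode3}, does not change sign on $\Omega$. As in Proposition~\ref{uniq}, I would factor out $y^4$, set $M:=M_2/y^4$, and exploit the symmetry $M(x,y)=M(-x,-y)$ to reduce to half of $\partial\Omega$, split into the four pieces $\gamma_1,\dots,\gamma_4$ (the two segments on $x=1/n$ and $y=1/n$, the hyperbola arc $xy=-1$, and the three corners), again with $n=\sqrt[4]m$ and $s=(75-125m)^{2/3}$. The argument that $\{M=0\}$ misses $\partial\Omega$ would proceed via Lemma~\ref{ll} with $r=0$: check the sign at one base value $m_0$ in the subinterval, verify that $\triangle_y$ of each boundary polynomial is nonzero on $I$, and verify the nonvanishing at the endpoints, each reduced to a one-variable Sturm computation (using resultants against $(75-125n^4)^2-s^3$ to eliminate the irrational $s$).

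The subtlety I anticipate, and the reason this is stated as a \emph{separate} proposition from the $m\in[1/2,3/5)$ case, is that for $m<1/2$ the previous Dulac function need not keep $M$ of one sign on all of $\Omega$; the function $M$ may genuinely change sign. This is precisely the situation flagged in Remark~\ref{GenBenDul2}, which the excerpt explicitly says ``We will use this idea in the proof of Proposition~\ref{925}.'' So the real plan is to apply the variant of Remark~\ref{GenBenDul2}: identify the separating set $\{M=0\}^*$ between the regions $\{M>0\}$ and $\{M<0\}$, prove that $\{M=0\}^*$ is without contact by the flow of $X$ (by checking that $\langle\nabla M, X\rangle$ does not vanish on it, or that it is transverse where it meets $\partial\Omega$), and then conclude that every connected component of $\mathcal U\setminus(\{V_2=0\}\cup\{M=0\}^*)$ is simply connected. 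This delivers non-existence through the modified case~(I).

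The main obstacle will be the transversality verification for $\{M=0\}^*$: unlike the clean single-sign situation of Proposition~\ref{uniq}, here I must control a genuine sign-change locus inside $\Omega$ and certify that no trajectory is tangent to it, which amounts to a second layer of resultant-and-Sturm arguments on polynomials carrying the irrational parameters $n$ and $s$. I would handle the resulting large-coefficient one-variable polynomials by the same rational-interval-enclosure trick used for $R^{\pm}(y)$ in Proposition~\ref{uniq}, bounding the irrationals $\sqrt[4]m$ and $(75-125m)^{2/3}$ between nearby rationals from their continued-fraction expansions, so that every decisive inequality is ultimately settled by Sturm's method on a rational polynomial. Together with Lemma~\ref{remcuad}(ii), which confines any periodic orbit or polycycle to $\Omega$, this completes the proof.
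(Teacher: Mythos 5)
Your strategic skeleton matches the paper's: confine periodic orbits and polycycles to $\Omega$ via Lemma~\ref{remcuad}, take $V_2=g_0(y)+g_1(y)x+g_2(y)x^2$ with $k=2/3$ as in Lemma~\ref{LeVM}(ii), invoke Remark~\ref{GenBenDul2} because $M_2$ genuinely changes sign for part of the parameter range, and settle every inequality by Lemma~\ref{ll}, resultants and Sturm's method. But there is a genuine gap at the core of your plan: the seed $g_2$. The function~\eqref{g2final} from Proposition~\ref{uniq} cannot be reused, because it was engineered (through the choice $C_2=-(3/5-m)^{2/3}$) precisely so that $\{V_2=0\}$ has an oval surrounding the origin; with such an oval present one is in case~(II) of Proposition~\ref{GenBenDul}, which only yields \emph{at most one} limit cycle --- uniqueness, never non-existence. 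To reach case~(I) (or its analogue in Remark~\ref{GenBenDul2}) you need a $V$ whose zero set has \emph{no} oval in $\Omega$, and your hedge ``or a suitable analogue'' is exactly where the whole difficulty of this proposition lives. The paper builds a different seed from scratch: an even polynomial $g_2(y)=a_0+a_2y^2+a_4y^4+a_6y^6+a_8y^8$ whose coefficients are fixed by demanding that $\phi(y)$, the coefficient of $x$ in $M_2$, vanish to order nine at $y=0$, and that $M_2$ vanish at the two saddle points (which sit on $\partial\Omega$ and would otherwise force a sign change of $M$ near the corners). A side benefit you forfeit: this $g_2$ is polynomial in $n=\sqrt[4]{m}$ with rational coefficients, so the irrational $s=(75-125m)^{2/3}$ and the continued-fraction enclosures you plan to carry along disappear from this proof altogether.

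Two further steps are missing or understated. First, you never say how to verify the no-oval condition for whatever $V$ you end up using; the paper proves that $\{V=0\}\cap\Omega$ is without contact (since $\dot V=M|_{\{V=0\}}$, this reduces to showing $\{M=0\}\cap\{V=0\}\cap\Omega=\emptyset$ by yet another resultant), so that an oval of $\{V=0\}$ would be a closed curve without contact, hence by Poincar\'{e}--Bendixson it would surround the origin --- and this is excluded by checking $V(x,9x/10)\neq0$ on the relevant range. Second, the parameter interval must be split at a critical value $\widetilde n\approx 0.8046$, the root of the numerator of $\partial M/\partial x$ evaluated at the saddle corner $(1/n,1/n)$: for $n\le\widetilde n$ the set $\{M=0\}$ misses $\Omega$ entirely and the plain Proposition~\ref{GenBenDul} suffices, whereas only for $n>\widetilde n$ does $\{M=0\}^*$ enter $\Omega$ and require the without-contact verification you describe (via $\langle\nabla M,X\rangle$, the factorization of $\Res(M,N,x)$, and two more resultants pushing the points of $\{M=0\}\cap\{N=0\}$ outside $\Omega$). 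Your plan for that last verification is sound in outline, but without the correct seed $g_2$ none of these computations can even be set up.
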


\begin{proof} We would like to apply Proposition~\ref{GenBenDul}. To this end we
will follow similar steps that in the proof of
Proposition~\ref{uniq}, but with a function $V$ such that the set
$\{V=0\}$ has no oval in $\Omega.$ Recall that $\Omega$ is  the
domain introduced in Lemma~\ref{remcuad}, where the limit cycles and
the polycycles must lay. We take
$V=V_2(x,y)=g_0(y)+g_1(y)x+g_2(y)x^2$ with $g_1(y)=g_2'(y)$,
$g_0=(1/2)g_2''(y)-(1/2)my^5g_2'(y)+(5/3)my^4g_2(y)$. Now we
consider $ g_2(y)=a_0+a_{2} y^{2}+a_{4} y^{4}+a_{6} y^{6}+a_{8}
y^{8}$, with coefficients to be determined. From statement~(ii) of
Lemma~\ref{LeVM} it follows that the corresponding $M_2$ is a
polynomial function in $x$ of the form $M_2=\phi(y)x+\psi(y)$ where
$\phi(y)$ and $\psi(y)$ are polynomials in the variable $y$ whose
coefficients depend on $a_{2j}$, $j=0,1,\ldots,4$. In order to
simplify the computations, we change the parameter $m$ by $n^4$ to
transform $V$ into a polynomial in the variables $x$, $y$, and $n$.
Since $m\in(9/25,1/2)$ we can restrict our study to
$n\in(0.77,0.844)$.

We consider the values of $a_4,a_6$ and $a_8$  such  that $\phi(y)$
has a zero at $y=0$ of multiplicity nine,  we choose the value of
$a_2$  by imposing that $M_2$ vanishes at the two saddle points of
the system and, finally, we use the freedom of changing $g_2(y)$ by
$\lambda g_2(y),$ for any $0\ne\lambda\in\R$,  to remove all the
denominators. We obtain that
\begin{align*}
{g}_2(y)=&270 (9 + 51 n^2 + 213 n^4 + 535 n^6 + 924 n^8 + 756
n^{10})\\& - 756 n^2 (9 + 42 n^2 + 105 n^4 + 130 n^6) y^2
\\
&+3 (3 - 10 n^4) (9 + 51 n^2 + 213 n^4 + 535 n^6 + 924 n^8 +
       756 n^{10}) y^6\\&
    -3 n^2 (3 - 13 n^4) (9 + 42 n^2 + 105 n^4 + 130 n^6) y^8.
\end{align*}
The corresponding $M_2$ is of the form
\begin{align}\label{eq:m}
M_2(x,y)=\frac{2}{3}y^4\,(\phi(y)x+\psi(y))=:\frac{2}{3}y^4\,M(x,y),
\end{align}
where
\begin{align*}
\phi(y)=&3 (3 - 10 n^4) (3 + 35 n^4) (9 + 51 n^2 + 213 n^4 + 535 n^6 + 924 n^8 +
756 n^{10}) y^5
\\&- 3 n^2 (3 - 13 n^4) (3 + 86 n^4) (9 + 42 n^2 + 105 n^4 + 130 n^6) y^7,\\
\psi(y)=&-756 n^2 (3 - 5 n^4) (9 + 42 n^2 + 105 n^4 + 130 n^6) \\&+
 27 (3 - 5 n^4) (9 + 51 n^2 + 213 n^4 + 535 n^6+ 924 n^8 +
    756 n^{10}) y^4 \\&-
 12 n^2 (9 + 42 n^2 + 105 n^4 + 130 n^6) (9 + 45 n^4 -
    196 n^8) y^6 \\&-
 40 n^8 (3 - 10 n^4) (9 + 51 n^2 + 213 n^4 + 535 n^6
 + 924 n^8 + 756 n^{10}) y^{10}\\&+
 91 n^{10} (3 - 13 n^4) (9 + 42 n^2 + 105 n^4 + 130 n^6) y^{12}.
\end{align*}

Recall that the main hypothesis in Proposition~\ref{GenBenDul} is that $M$ does not
change on $\Omega$. As we will see, this happens only for $n\in J:=(0.77,\widetilde
n ]$ where $\widetilde n\approx 0.8045592  $ will be precisely defined afterwards.
When $n\in K:=(\widetilde n,0.844)$ the result will be a consequence of the
variation of Proposition~\ref{GenBenDul} described in Remark~\ref{GenBenDul2}.

For $n\in J$, following similar  steps that in the proof of Proposition~\ref{uniq},
we divide  half of the boundary of $\Omega$  in five pieces:
\begin{itemize}
\item The segment $\gamma_1=\{(x,1/n)\,:\, -n< x< 1/n\},$

\item The segment $\gamma_2=\{(1/n,y)\,:\, -n< y< 1/n\},$

\item The piece of hyperbola $\gamma_3=\{(x,-1/x )\,:\, n< x< 1/n\},$

\item The corners $\gamma_4=\{(1/n,-n),(n,-1/n)\}$,

\item The corner $\gamma_5=\{(1/n,1/n)\}$

\end{itemize}
and we will prove that $\{M=0\}\cap \gamma_i=\emptyset$ for each $i=1,2,3,4$ and
that although $(1/n,1/n)\in\partial\Omega$, the set $\{M=0\}$ does not enter in
$\Omega.$ From these results we will have proved that $M$ does not change sign on
$\Omega$ and, as a consequence, the proposition will follow for $n\in J.$

To prove the fifth assertion it suffices to study the function $M$
in a neighborhood of the point $(1/n,1/n)\in\partial \Omega$. By the
construction of $M$, it holds that  $M(1/n,1/n)=0$. By computing the
partial derivatives  of $M$ at this point we obtain which is the
tangent vector of the curve at  $(1/n,1/n)$. Then, it is easy to see
that when $n\in J$,  in a punctured neighborhood $\mathcal W$ of
$(1/n,1/n)$, it holds that $\mathcal{W}\cap\{M=0\}\cap
\,\Omega=\emptyset$. In fact, $\widetilde n\in\partial J$ is a
solution of the equation
$$\numer\Big(\frac{\partial M(x,y)}{\partial x}\Big|_{(x,y)=(1/n,1/n)}\Big)=0,$$ where
$\numer(\cdot)$ denotes the numerator of the rational function.
Moreover,
\begin{align}\label{eee}
M\Big(x,\frac1 n\Big )=-\frac{9( nx-1 )}{n^4} & \Big( 88200 {n}^{16}+107800
{n}^{14}-4930 {n}^{12}\nonumber\\&- 37380 {n}^{10}-15855 {n}^{8}-2736 {n}^{6}+576
{n}^{4}+108 {n}^{2} -27 \Big)
\end{align}
and  $\widetilde n$ is also the positive root of the polynomial in
$n$ appearing in the right-hand side of the above formula. Notice
that when $n=\widetilde n$, the straight line $\{y=1/\widetilde n\}$
is a subset of $\{M=0\}.$  This fact is the reason for which this
approach only works for $n\in J=(0.77,\widetilde n]$.

Let us prove the remainder four assertions. As in the proof of
Proposition~\ref{uniq}, they follow by showing that when $n\in J$,
\begin{itemize}
\item $R_1(x,n):=\numer(M(x,1/n))\ne0,$ for $x\in(-n,1/n)$.

\item $R_2(y,n):=\numer(M(1/n,y))\ne0,$ for $y\in(-n,1/n)$,

\item $R_3(x,n):=\numer(M(x,-1/x))\ne0,$ for $x\in(n,1/n)$.

\item $M(1/n,-n)\cdot M(n,-1/n)\ne0.$
\end{itemize}

That  $R_1$ has  no zeros in $J$, is an straightforward  consequence
of~\eqref{eee}.

To study $R_2$ and $R_3$ we will use  Lemma~\ref{remcuad}. We start computing the
discriminants,
\[
S_2(n)=\triangle_y(R_2(y,n)),\quad S_3(n)=\triangle_x(R_3(x,n)),
\]
and analyze whether they vanish or not on~$J.$ Using  the Sturm
method we get  that on~$J$,  $S_2$  vanishes only at one value
$n_2\approx0.8040188$ and  $S_3$ also vanishes only at one value
$n_3\approx 0.8045576.$   The root $n_2$ of $S_2$ forces to split
the study of $R_2(y,n)$ in the three subcases: $n\in(0.77,n_2)$,
$n=n_2$ and $n\in(n_2,\widetilde n]$. Doing the same type of
computations and reasoning as in the previous section we can prove
all the above assertion when $n\ne n_2$. The case $n=n_2$ follows by
continuity arguments, because it can be seen that in this situation
$R_2$ has a real multiple root but it is not in $(-n_2,1/n_2)$. The
study of $R_3$ is similar to the one of $R_2$ and we omit it. We
also get that $R_3$ neither vanishes on $(n,1/n).$

That for $n\ne \widetilde n$, $M(1/n,-n)\cdot M(n,-1/n)\ne0$  is
once more a consequence of the Sturm method.

 Therefore, when $n\in J,$ we are under the hypotheses of
 Proposition~\ref{GenBenDul},
 and we will know that the system has no limit cycles once
 we have proved that the set $\{V=0\}$ has no ovals. We defer the proof of this
  fact until we have considered the case $n\in K=(\widetilde n,0.844)$.

When $n\in K,$  we know that $\{M=0\}\cap \,\Omega\ne\emptyset$ and we are no more
under the hypotheses of Proposition \ref{GenBenDul}. Let us see that we can apply
the ideas of Remark \ref{GenBenDul2}. To this end we have to prove that
$\{M_2=0\}^*\cap \Omega $ is without contact for the flow of $X$. Note that
$\{M_2=0\}^*=\{M=0\}^*$.

We need to show that $\dot{M} =\langle\nabla M ,X\rangle$ does not vanish on $\{M
=0\}^*\cap \Omega$. We study the common points of $\{M =0\}$ and $\{\dot{M}=0 \}$
and prove that they are not in $\Omega.$ First, we compute
\[
\dot M(x,y)=\langle \nabla M(x,y),X(x,y)\rangle=:y^3N(x,y),
\]
and we remove the factor $y^3.$ We do not care about the points on $\{y=0\}$
because
\[M(x,0)=756 {n}^{2} \left( 5 {n}^{4}-3 \right)  \left(
{9}+42 {n }^{2}+105 {n}^{4}+130 {n}^{6} \right)\ne0,
 \]
for $n\in(0,0.88].$

The resultant $\Res(M ,N , x)$ factorizes as
\begin{equation*}\label{res}
\Res(M ,N , x)=y^{2}(n^2y^2-1) (P_{n,2}(y))(P_{n,34}(y)),
\end{equation*}
where $P_{n,2}(y)$ and $P_{n,34}(y)$ are polynomials in the variable $y$ with
respective degrees 2 and 34  and  whose coefficients are polynomial functions with
rational coefficients in the variable $n$.

Clearly, $(n^2y^2-1)$ does not vanish on $-1/n<y<1/n$. By using once
more Lemma~\ref{ll}  it is not difficult to prove  that $P_{n,2}(y)$
does not vanish either on $-1/n<y<1/n$, for $n\in(\widetilde
n,0.844)$. Hence we will focus on the factor $P_{n,34}(y)$.

We will use again Lemma~\ref{ll}.  By using  the Sturm method we get that
$\triangle_y(P_{n,34}(y))$ has no zeros in the interval $K$. In fact one zero is
$\widetilde n\in\partial K$ and another one is
 $n^*\approx
0.8445\not\in J$ and this is the reason  for which we can only prove the result
until $n=0.844<n^*.$ By using Sturm method, it can be shown that
$P_{n,34}(-1/n)\cdot P_{n,34}(1/n) \not=0$ for all $n\in K$ and, for instance, for
$n=n_0=83/100\in K$, the polynomial $P_{n_0,34}(y)$ has exactly two (simple) zeros
in $-1/n_0<y<1/n_0$. Then, Lemma~\ref{ll} with $r=2,$ implies that $P_{n,34}(y)$
has exactly two (simple) zeros in  $-1/n<y<1/n$, for all $n\in K$. We call them
$y=y_i(n),i=1,2$ and they are continuous function of $n$. Therefore, we need to
prove that the corresponding points in $\{M =0\}\cap\{N =0\}$ are outside of
$\Omega$.

Notice that because of the expression of $M$, given in~\eqref{eq:m},
the points in $\{M=0\}$ are on the curve
$\Gamma=\{\big(-\frac{\psi(y)}{\phi(y)},y\big)\,:\,
y\in\mathbb{R}\setminus\{0\}\}$. Moreover it can be easily seen that
$\phi(y)\ne0$ on the region that we are considering. Therefore the
points in  $\{M =0\}\cap\{N =0\}$ are given by the two continuous
curves
\[
\gamma_i:=\Big\{\Big(-\dfrac{\psi(y_i(n))}{\phi(y_i(n))},y_i(n)\Big)\,:\,n\in
K\Big\},\quad i=1,2.
\]
 For a fixed $n\in K$ it is not difficult to prove that the points in $\gamma_i,i=1,2$ are
outside of $\Omega$. If for some $n\in K$ there was a point inside $\Omega$,  by
continuity it would be at least one  point in one of the pieces of boundary of
$\Omega$ formed  by the straight  line  $\{x-1/n=0\}$ and the hyperbola
$\{xy+1=0\}$. To prove that such a point does not exist we compute the following
two resultants
\begin{align*}
&\Res\Big(\numer\Big( -\dfrac{\psi(y)}{\phi(y)}-\dfrac 1 n\Big), P_{n,34}(y),y
\Big)= P_{1250}(n),
\\
&\Res\Big(\numer\Big( -y\dfrac{\psi(y)}{\phi(y)}+1\Big), P_{n,34}(y),y \Big)=
P_{1260}(n),
\end{align*}
where $P_\ell(n)$ are given polynomials with rational coefficients
and degree $\ell$. Both polynomials factorize in several factors
and, using once more the Sturm method, we can easily prove that do
not vanish on $K$. Hence,  $\{M =0\}\cap\{ N
=0\}\cap\Omega=\emptyset$ which implies that $\{M =0\}\cap\Omega$ is
without contact by the flow of $X$, as we wanted to prove.

 Since $M $ is linear in the
variable $x$, $\{M =0\}$ cannot have ovals. Therefore, by
Remark~\ref{GenBenDul2}, to end the proof we need to show that the
set $\{V=0\}$ has no ovals either in $\Omega.$ We claim that the set
$\{V=0\}\cap\,\Omega$ is without contact by the flow of the system.
If this happens and $\{V=0\}$ had an oval then it would be without
contact. Then by the Poincar\'{e}-Bendixson Theorem it should surround
the origin. However, by considering the straight line passing
through the origin $y=9x/10$ it is easy to prove, by using again
Lemma~\ref{ll}, that the function $V (x,9x/10)$ does not vanish on
the interval $-1/n<x<1/n$ for all $n\in(0.77,0.844)$. Thus, $\{V
=0\}\cap \{y-9x/10=0\}=\emptyset$. Hence, $V $ has no ovals inside
$\Omega$ as we wanted to see and the proposition follows by using
all the above results and  the reasoning explained in
Remark~\ref{GenBenDul2}.

To prove the above claim, it suffices to see that  $\{M =0\}\cap\{
V=0\}\cap\Omega=\emptyset$. This is because precisely, $M\big|_{\{V=0\}}=\dot V.$

Recall that when $n\in J=(0.77,\widetilde n ]$ then $\{M =0\}\cap\Omega=\emptyset$
and so the result follows.

Let us consider the case $n\in K=(\widetilde n, 0.844)$. To study if
$\{V =0\}$ and $\{M =0\}$ intersect, we compute the resultant of $M
$ and $V $ with respect to $x$. We have
$$\Res(V ,M , x)=(n^2y^2-1)P_{n,30}(y),$$
where $P_{n,30}(y)$ is a polynomial of degree 30 and whose
coefficients are polynomial functions  in the variable $n$ with
rational coefficients. We want to prove that $\Res(V ,M , x)$ does
not vanish on the interval $-1/n<y<1/n$ for $n\in K$. It suffices to
study $P_{n,30}(y)$. We will use once more Lemma~\ref{ll}.

The polynomial  $P_{n,30}(-1/n)\cdot P_{n,30}(1/n)$   has  no real
roots when $n\in K$. Moreover hypothesis~(i) of Lemma~\ref{ll} holds
with $r=0$ (no real roots) by considering for instance $n_0=82/100$.
To see that condition (iii) of the lemma holds, we compute
$\triangle_y(P_{n,30}(y))$. It is a polynomial of degree $2728$ in
the variable $n$ which factorizes in several factors, being the
largest one of degree 594. From this decomposition we can  prove
that $\triangle_y(P_{n,30}(y))$ has no zeros  for $n\in K$.
Therefore, by Lemma~\ref{ll} we conclude that $P_{n,30}(y)$ does not
vanish on the whole interval $-1/n<y<1/n$ for $n\in K$, and the
claim follows.
\end{proof}

\begin{proposition}\label{547}
For $m\in[0.5,0.547]$, system~\eqref{sism}  has neither limit cycles nor
polycycles.
\end{proposition}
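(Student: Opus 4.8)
The plan is to split $[0.5,0.547]$ into a lower piece $[0.5,m_1]$ and an upper piece $(m_1,0.547]$, treating the first with a Bendixson--Dulac certificate and the second by a continuation argument resting on hyperbolicity and the Poincar\'e--Bendixson theorem. All the work takes place inside the region $\Omega$ of Lemma~\ref{remcuad}, which contains every periodic orbit and every polycycle, and it uses three facts valid for $m\in[1/2,3/5)$: the origin is a non-degenerate attractor, since the first Lyapunov constant $V_4\propto 5m-3<0$ there (Theorem~\ref{mteo3}); any limit cycle is hyperbolic and a repeller, while a limit cycle and a polycycle never coexist (Proposition~\ref{uniq}); and any heteroclinic polycycle is a repeller, because $\rho(\Gamma)>1$ for $m>9/25$ (Proposition~\ref{nc}).

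First I would reuse the Dulac machinery of Propositions~\ref{uniq} and~\ref{925}: take $k=2/3$ and $V=V_2=g_0+g_1x+g_2x^2$ as in Lemma~\ref{LeVM}(ii), choosing $g_2$ to be a polynomial --- either a truncated series solution of~\eqref{ode3} or a short undetermined-coefficient ansatz --- so that the reduced function $M:=M_2/y^4=\phi(y)x+\psi(y)$ is linear in $x$ (so $\{M=0\}$ carries no ovals) and vanishes at the two saddles. After the substitution $n=\sqrt[4]{m}$, $s=(75-125m)^{2/3}$ turns everything into polynomial data, I would carry out the same boundary analysis as in Proposition~\ref{uniq}: decompose $\partial\Omega$ into the straight edges, the hyperbola arc and the corners, and apply Lemma~\ref{ll} with the Sturm method to show that $\{M=0\}$ --- or, on the range where $M$ changes sign, the separating set $\{M=0\}^*$ of Remark~\ref{GenBenDul2} --- has no contact with the flow, and that $\{V=0\}$ has no oval in $\Omega$. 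By Proposition~\ref{GenBenDul} (respectively Remark~\ref{GenBenDul2}) this rules out both limit cycles and polycycles for $m\in[0.5,m_1]$. I would further arrange $g_2$ so that the only sign change of $M$ sits in an inner neighbourhood of the origin; since a polycycle must pass through the two saddles on $\partial\Omega$, it cannot lie in that inner region, so the same certificate disposes of polycycles on the entire interval $[0.5,0.547]$.

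It then remains to exclude limit cycles for $m\in(m_1,0.547]$, and here I would argue by continuation. A hyperbolic limit cycle persists under small variations of $m$ (a general fact, not needing the rotated--vector--field structure the family lacks), so a cycle at some $\widehat m\in(m_1,0.547]$ would extend to a maximal interval to its left; at the left endpoint it could only shrink to the origin, lose hyperbolicity, or converge to a polycycle, the Poincar\'e--Bendixson theorem in $\Omega$ guaranteeing these are the only possibilities. The first is impossible because the origin is a non-degenerate attractor for $m<3/5$, so no Hopf bifurcation occurs there; the second because every cycle on $[1/2,3/5)$ is hyperbolic (Proposition~\ref{uniq}), which forbids a saddle--node of cycles; and the third by the polycycle non-existence just established. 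Hence the cycle would survive all the way down to $m_1$, contradicting the Dulac step. I expect the main obstacle to be precisely this Dulac certification as $m\to0.547$: the Sturm computations run on high-degree polynomials with the irrational coefficients $n$ and $s$, and they necessarily degrade as $m$ approaches the genuine bifurcation value $m^*\approx0.56$, which is why the argument stops at $0.547$ rather than at $m^*$. A secondary delicate point is confining the sign change of $M$ to an inner region, so that the single Dulac function also kills the polycycles on all of $[0.5,0.547]$.
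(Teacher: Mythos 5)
Your proposal breaks down at the point where you claim that a Dulac certificate with the sign change of $M$ ``confined to an inner neighbourhood of the origin'' disposes of polycycles on all of $[0.5,0.547]$. The Bendixson--Dulac argument is not an argument along the closed curve: it is Stokes' theorem applied to $\iint_C \divv(gX)$ over the \emph{region $C$ bounded by} the invariant curve. Any polycycle through the two saddles necessarily surrounds the origin (index theory, Lemma~\ref{remcuad}), so the disc it bounds contains precisely the inner region where $M$ changes sign, and the integral no longer has a definite sign. Keeping the curve itself away from the bad set buys nothing. At best you are then in the situation of Proposition~\ref{GenBenDul}(II) or Remark~\ref{GenBenDul2}, with one annular component, and the conclusion there is ``at most one'' closed invariant curve --- never ``none''. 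Consequently polycycles are \emph{not} excluded on $(m_1,0.547]$, and this hole is fatal to your continuation step: the termination alternative you can no longer rule out is exactly termination of the family of cycles at a polycycle at some $m_\dagger\in(m_1,0.547]$. The repeller character of the polycycle ($\rho(\Gamma)>1$ for $m>9/25$) does not rescue you: a hyperbolic repelling graphic absorbs a family of \emph{repelling} limit cycles from one side of the parameter with no inconsistency, and Proposition~\ref{uniq} (non-coexistence) is not violated, since the cycles and the polycycle occur at different values of $m$. (Two smaller points: the origin is nilpotent, hence degenerate, so ``no Hopf bifurcation'' must be argued via the first generalized Lyapunov constant $V_4\propto 5m-3$ being bounded away from zero on the compact interval, which gives a uniform cycle-free neighbourhood of the origin; and your termination trichotomy is really Perko's planar termination principle, not just Poincar\'e--Bendixson, though that reference is available.)

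For contrast, the paper avoids both difficulties with a single construction that is not a Dulac argument at all: it builds, for each $m\in[0.5,0.547]$, a positively invariant region $\mathcal R$ whose boundary $\{V_2=0\}$ passes through the two saddles and is tangent there to their separatrices (the coefficients of $g_2$ are fixed by these tangency and vanishing conditions, and invariance is certified by resultants plus Sturm). Positive invariance with the saddles on $\partial\mathcal R$ is directly incompatible with a heteroclinic polycycle; and if a limit cycle existed it would lie inside $\mathcal R$, be unique, hyperbolic and repelling by Proposition~\ref{uniq}, so Poincar\'e--Bendixson applied to the positively invariant annulus between the cycle and $\partial\mathcal R$ would produce a second periodic orbit --- a contradiction. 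This kills cycles and polycycles simultaneously, for each fixed $m$, with no continuation in the parameter and no need for $M$ to keep its sign on $\Omega$. If you want to salvage your two-interval strategy, you must first prove polycycle non-existence on the upper subinterval by some genuinely different mechanism (the paper's invariant region being one); the Dulac machinery alone cannot do it once $\{V=0\}$ or $\{M=0\}^*$ separates the origin from the saddles.
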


\begin{proof} We will construct a positive invariant region $\mathcal R$ having the two
saddle points in its boundary.  As we will see, the proposition
follows once we have constructed this region, simply by using the
uniqueness and hyperbolicity of the limit cycle, whenever it exits.
We remark that in this proof we will not use the Bendixson-Dulac
theorem.

 Assume that such a positive invariant region $\mathcal R$ exits.
 By the Index theory, if the system had a limit cycle, it
 should surround only the origin. By Proposition~\ref{uniq} we already
know that for $n\in [0.5,0.6)\supset L:=[0.5,0.547]$,   the limit
cycle  would be unique, hyperbolic and repeller. By the
Bendixson-Poincar\'{e} Theorem the above facts force the existence of
another limit cycle and so a contradiction. It is straightforward
that the existence of this positive invariant region is not
compatible with the existence of a polycycle connecting both saddle
points.

To construct $\mathcal R$ we consider a function
$V_2(x,y)=g_0(y)+g_1(y)x+g_2(y)x^2$, with $g_0$ and $g_1$ as
in~\eqref{g0g1} and $g_{2}$ an even polynomial function of degree 12
of the form
$$g_2(y)=1+\sum_{k=1}^6a_{2k}y^{2k},$$
to be determined. By statement (ii) of Lemma \ref{LeVM}, the
function $M_2$, given in \eqref{Mode3}, associated to this $V_2$ and
$k=2/3$ is of the form $M_2=\phi(y)x+\psi(y)$, where $\phi(y)$ and
$\psi(y)$ are polynomials in the variable $y$ whose coefficients
depend on the unknowns $a_{2k}$ with $k=1\ldots 6$.

We fix  $a_4$ and $a_6$ in such a way that $\phi(y)$ has a zero at
$y=0$ of multiplicity nine; we get the value of $a_8$ by imposing
that $V_2$ vanishes at the two saddle points; the values of $a_2$
and $a_{10}$ are chosen so that the curve $V_2=0$ is tangent to both
separatrices  at the saddle points of the system. Finally, after
experimenting with several values for $a_{12}$ and $m$, so that the
region with boundary $\{V_2=0\}$ is positively invariant, we fix
$a_{12}=-157(10m-3)(35m+3)/44550000$.

The region $\mathcal R$ will be the bounded connected component of
$\mathbb{R}^2\setminus\{V_2=0\}$ containing the origin, see Figure
\ref{funcionV} $(a)$.

\begin{figure}[h]
\begin{center}
\begin{tabular}{cc}
\epsfig{file=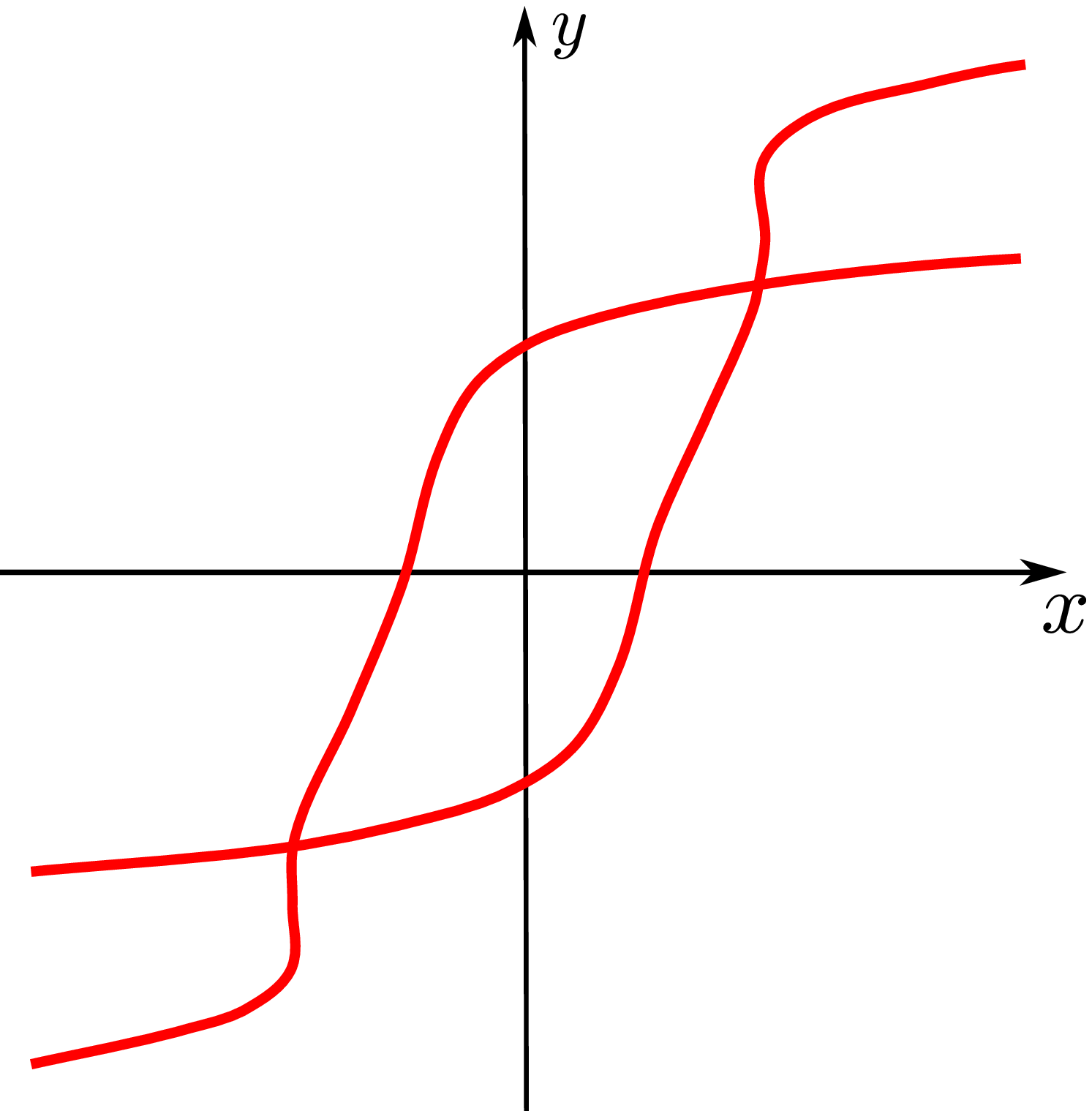,width=3.5cm}\qquad\qquad&\qquad\qquad
\epsfig{file=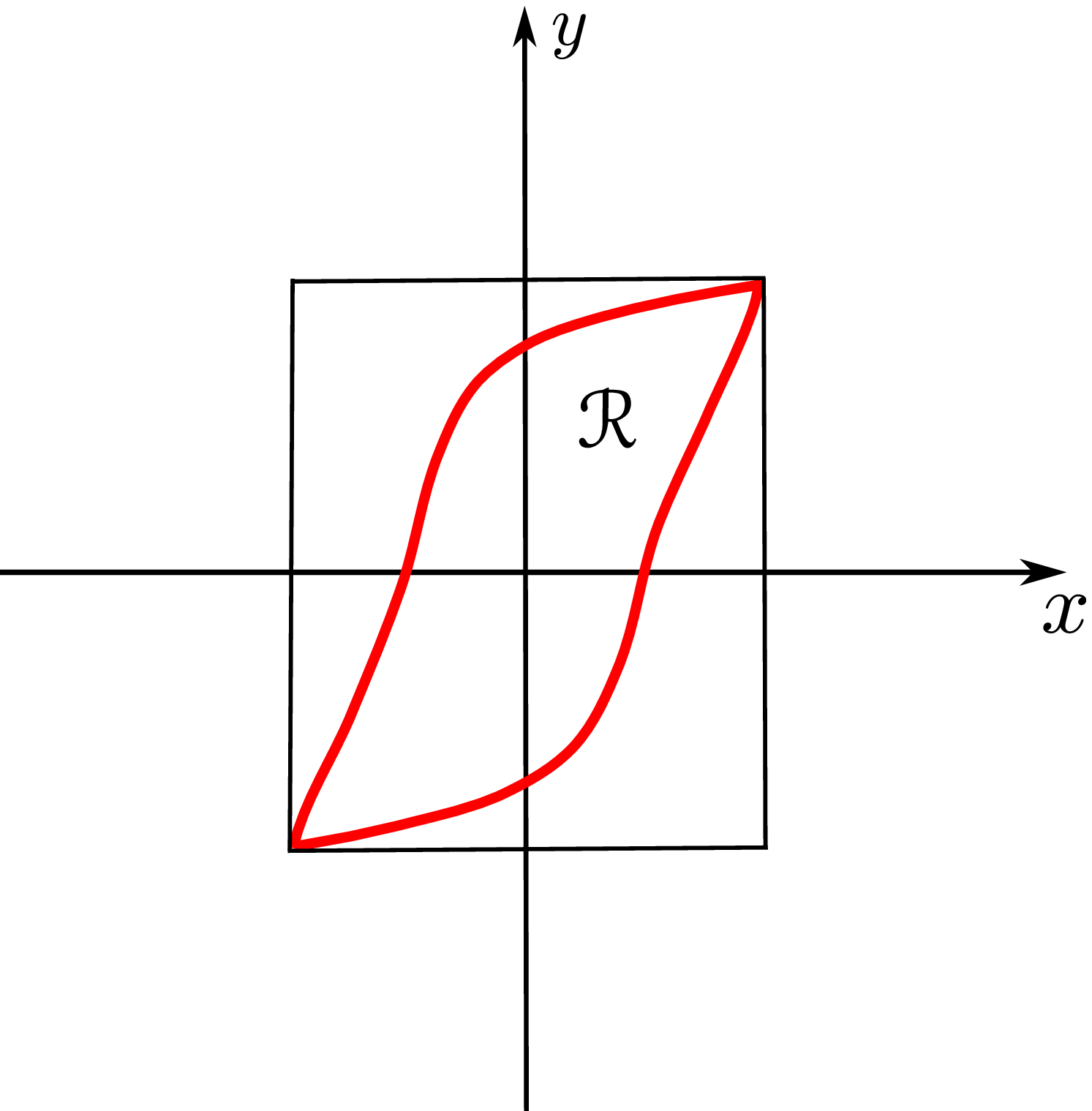,width=3.5cm}\\
(a)\qquad\qquad & \qquad\qquad (b)
\end{tabular}
\caption{Positively invariant region $\mathcal R$ with boundary
$\{V_2=0\}$.} \label{funcionV}
\end{center}
\end{figure}

We need to prove that the curve $\{V_2=0\}\cap\mathcal{S}$ (see
Figure~\ref{funcionV} (b)) is such that the vector field $X$ points
in on all its points. We introduce the new parameter $m=n^2$ and we
compute $\dot{V_2}=\langle\nabla V_2,X\rangle$ and
\begin{equation}\label{re}
\Res(V_2,\dot{V_2},x)=
\frac{y^8(ny^2-1)^4(P_{n,12}(y))^3P_{n,36}(y)}{n^{28}(120n^3+113n^2-3)^6},
\end{equation}
where $P_{n,12}(y)$ and $P_{n,36}(y)$ are polynomials of degree 12
and 36, respectively, and  whose coefficients are polynomial
functions in the variable $n$.

Notice that since $m\in[0.5,0.547]$ then $n\in T:=[0.707,0.7396]$.
Since the denominator of \eqref{re} is positive, we  only need to
study its numerator.

Using once more Lemma~\ref{ll} and the same tools that in the
previous sections we prove that $P_{n,12}(y)\cdot P_{n,36}(y)$ is
positive for all $y\in(-1/n,1/n)$ and $n\in T.$ We omit the details.

Hence, we have proved that the numerator of $\Res(V_2,\dot{V_2},x)$ is non-negative
and it only vanishes on $y=0$ and $y=\pm n^{-1/2}$. Therefore the sets $\{V_2=0\}$
and $\{\dot V_2=0\}$ only can intersect on $\{y=0\}.$ Indeed, the sets
$\{V_2=0\}\cap\mathcal{S}\cap\{y=0\}$ and $\{\dot
V_2=0\}\cap\mathcal{S}\cap\{y=0\}$ coincide and have two points $(\pm \widehat
x(n),0)$ for each $n\in T.$ Studying the local Taylor expansions of $V_2(x,y)$ and
$\dot V_2(x,y)$ at these points we get that the respective curves $V_2(x,y)=0$ and
$\dot V_2(x,y)=0$ have at them a fourth order contact point and,  as a consequence,
$\dot V_2$ does not change sign on $\{V_2=0\}\cap\mathcal{S}$, as we wanted to
prove. That, on $\{V_2=0\}$, the vector field $X$ points in,  is a simple
verification. Hence the proof follows.\end{proof}

\section{Existence of polycyles}\label{se:hetero}

This section is devoted to prove that the phase portrait  (b) in
Figure~\ref{Sphere} can only  appear for finitely many values of
$m$. Notice that this phase portrait is precisely the only one
presenting a polycycle. As we have already explained, the main
difficulty is that we are dealing with a  family that is not a
SCFRVF. To see that the control of the existence of polycycles for
general polynomial 1-parameter families can be a non easy task, we
present a simple
 family for which a polycycle appears at least for two values of the parameter.

\begin{example}\label{exam} For $m=0$ and $m=1$, the planar systems
\begin{equation}\label{exem}
\left\{\begin{array}{lll} \dot{x}= -2y+(3m-4)x+(4-2m)x^3+xy^2-x^5=P_m(x,y),
\\
\dot{y}= (4-m)x+xy^2-2mx^3-x^5=Q_m(x,y), \qquad m\in\mathbb{R}.
\end{array}\right.
\end{equation}
have a heteroclinic polycycle connecting the saddle points located at
$(\pm\sqrt{2-m},0)$.
\end{example}

\begin{proof} The above family has been cooked to have explicit algebraic
polycycles. Consider the family of algebraic curves $H_m(x,y)= y^2-(x^2+m-2)^2=0$
and compute
\[ W_m(x,y)=\langle\nabla H_m(x,y),(P_m(x,y),Q_m(x,y))\rangle.\] Doing the resultant  with respect to $x$  of $W_m$
and $H_m$ we obtain
\[
\Res(W_m(x,y),H_m(x,y),x)=m^4(1-m)^4y^4R(y,m),
\]
where $R$ is a polynomial of degree 4 in both variables, $m$ and $y$. This implies
that for $m=0$ and $m=1$ the algebraic curve $H_m(x,y)=0$ is invariant by the flow
of~\eqref{exem}. These sets coincide with the invariant manifolds of the saddle
points  $(\pm\sqrt{2-m},0)$ and contain the corresponding heteroclinic polycycles.
\end{proof}

We have simulated the phase portraits of~\eqref{exem} for several values of  $m$
and it seems that no polycycles appear for other values of $m$. In any case, the
example shows the differences between SCFRVF, for which as we have discussed in
Subsection~\ref{ss:nscf}, the polycycle usually appears for a single value of the
parameter, and families that are not SCFRVF.

Let us continue our study of system~\eqref{sism}. We  denote by
${\bf p}_m^{\pm}=(\pm m^{-1/4},\pm m^{-1/4})$ the two saddle points of
the system.

\begin{proposition}\label{ppp} Let $(0,y^s(m))$ be the first cut of the stable manifold of
${\bf p}_m^+$ with the $Oy^+$-axis. Similarly, let $(0,y^u(m))$ be
the first cut of the unstable manifold of ${\bf p}_m^-$ with the
same axis, see Figure~\ref{anal} (a). Then the function
$\delta(m):=y^s(m)-y^u(m)$ is an analytic function.
\end{proposition}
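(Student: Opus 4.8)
The plan is to realize each of the two cut ordinates, $y^s(m)$ and $y^u(m)$, as the value of the (jointly analytic) flow of system~\eqref{sism} at an analytically varying time, starting from an analytically chosen point of the corresponding invariant manifold; then $\delta(m)=y^s(m)-y^u(m)$ will be analytic as a difference of analytic functions. I would treat $y^s(m)$ in detail, the argument for $y^u(m)$ being identical after reversing the time direction. First I would record that for $m>0$ the saddle $\mathbf{p}_m^+=(m^{-1/4},m^{-1/4})$ depends analytically on $m$ and is hyperbolic: the Jacobian of \eqref{sism} there equals
\[
\begin{pmatrix}-3m^{-1/2}&3m^{-1/2}\\-1&5\end{pmatrix},
\]
whose determinant is $-12m^{-1/2}<0$, so its two eigenvalues are real, simple and of opposite sign. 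Consequently the eigenvalues and a choice of eigenvectors are analytic functions of $m$ on $(0,\infty)$.

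The crucial step is to produce a point on the local stable manifold $W^s_{\mathrm{loc}}(\mathbf{p}_m^+)$ that depends analytically on $m$. I would invoke the classical analytic invariant manifold theorem in its version with dependence on parameters: since the vector field is polynomial in $(x,y)$ and analytic in $m$, and $\mathbf{p}_m^+$ is a hyperbolic saddle varying analytically with $m$, the local stable manifold can be written as an analytic graph $\eta=h(\xi,m)$ over the (analytically varying) stable eigendirection, with $h$ analytic jointly in $(\xi,m)$ and $h(0,m)=0$, $\partial_\xi h(0,m)=0$. Fixing a small $\xi_0$ on the branch pointing towards the positive $Oy$-axis then yields a point $q^s(m)\in W^s_{\mathrm{loc}}(\mathbf{p}_m^+)$, distinct from the saddle, that is analytic in $m$. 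This is the main obstacle of the proof: everything else is a transversality-plus-implicit-function-theorem routine, whereas here one must appeal to (or reprove) the analytic, parameter-dependent stable manifold theorem.

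Finally I would follow the negative semiorbit of $q^s(m)$, that is the curve $t\mapsto\varphi(t,q^s(m),m)$ for $t\le 0$, which lies in the stable separatrix and, by hypothesis, reaches the positive $Oy$-axis. Because the vector field is analytic, the flow $\varphi(t,\mathbf{x},m)$ is jointly analytic in all its arguments on its domain. On the line $\{x=0\}$ one has $\dot x=y^3-x^3=y^3$, so $\dot x>0$ at every point with $y>0$; hence the orbit is transversal to the axis there, always crosses it in the same sense, cannot be tangent to it, and cannot reach the origin. Writing $\Phi(t,m)$ for the $x$-component of $\varphi(t,q^s(m),m)$, the first-crossing time $\tau(m)$ solves $\Phi(t,m)=0$ with $\partial_t\Phi(\tau(m),m)=\dot x\ne0$, so the implicit function theorem gives $\tau(m)$ analytic, and $y^s(m)$, the $y$-component of $\varphi(\tau(m),q^s(m),m)$, is analytic as a composition of analytic maps; transversality also ensures that this first crossing persists and cannot jump as $m$ varies, so the local analytic pieces patch into a single analytic function. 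Repeating the argument for the unstable manifold of $\mathbf{p}_m^-$, integrating forward in time, shows that $y^u(m)$ is analytic, and therefore $\delta(m)=y^s(m)-y^u(m)$ is analytic, as claimed.
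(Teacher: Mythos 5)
Your proof is correct and follows essentially the same route as the paper: analytic dependence of the local stable (resp.\ unstable) manifold on $m$ at the hyperbolic saddle (where you invoke the parameter-dependent analytic invariant manifold theorem, the paper appeals to the tools of Perko's work \cite{Pe}), composed with the analytic map induced by the flow, using transversality to the $Oy^+$-axis and the implicit function theorem. The only cosmetic difference is that the paper parametrizes the local manifold by its cut with a nearby vertical section $\mathcal{L}$ and writes $y^s(m)=y_2(y_1(m))$, whereas you parametrize it as a graph over the stable eigendirection and flow a fixed point $q^s(m)$ to the axis.
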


\begin{figure}[h]
\begin{center}
\begin{tabular}{cc}
\epsfig{file=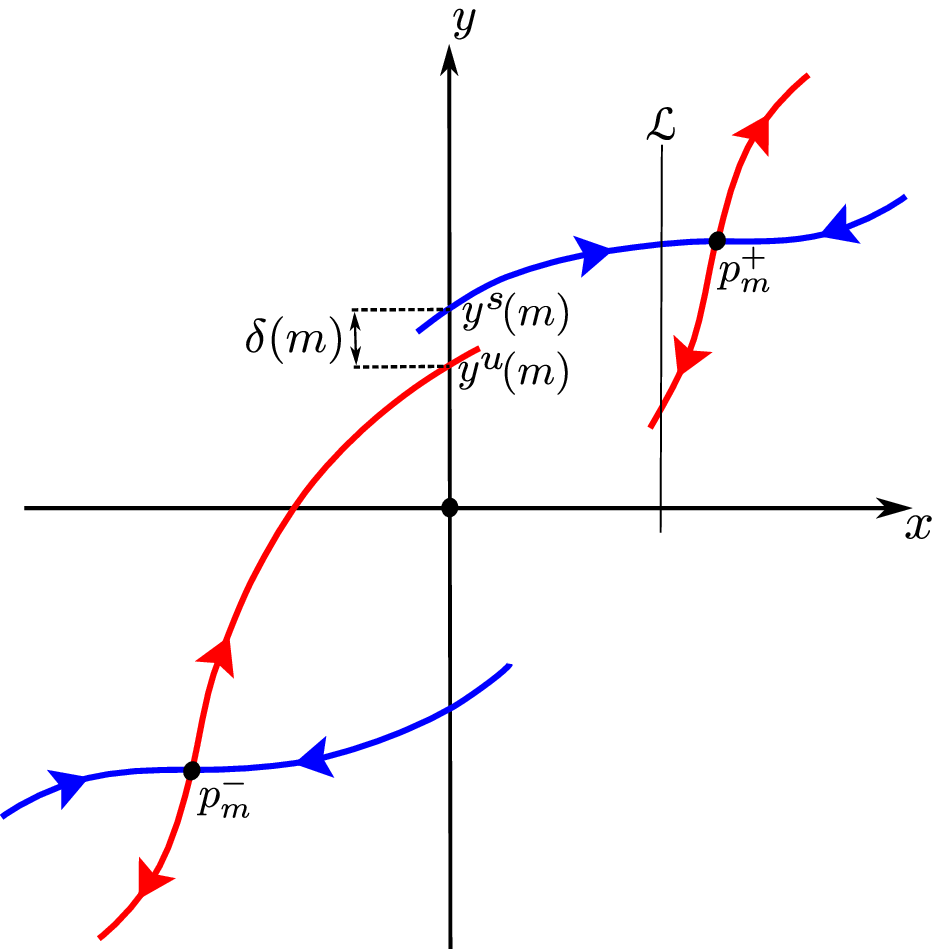,height=5.5cm}\qquad&\qquad
\epsfig{file=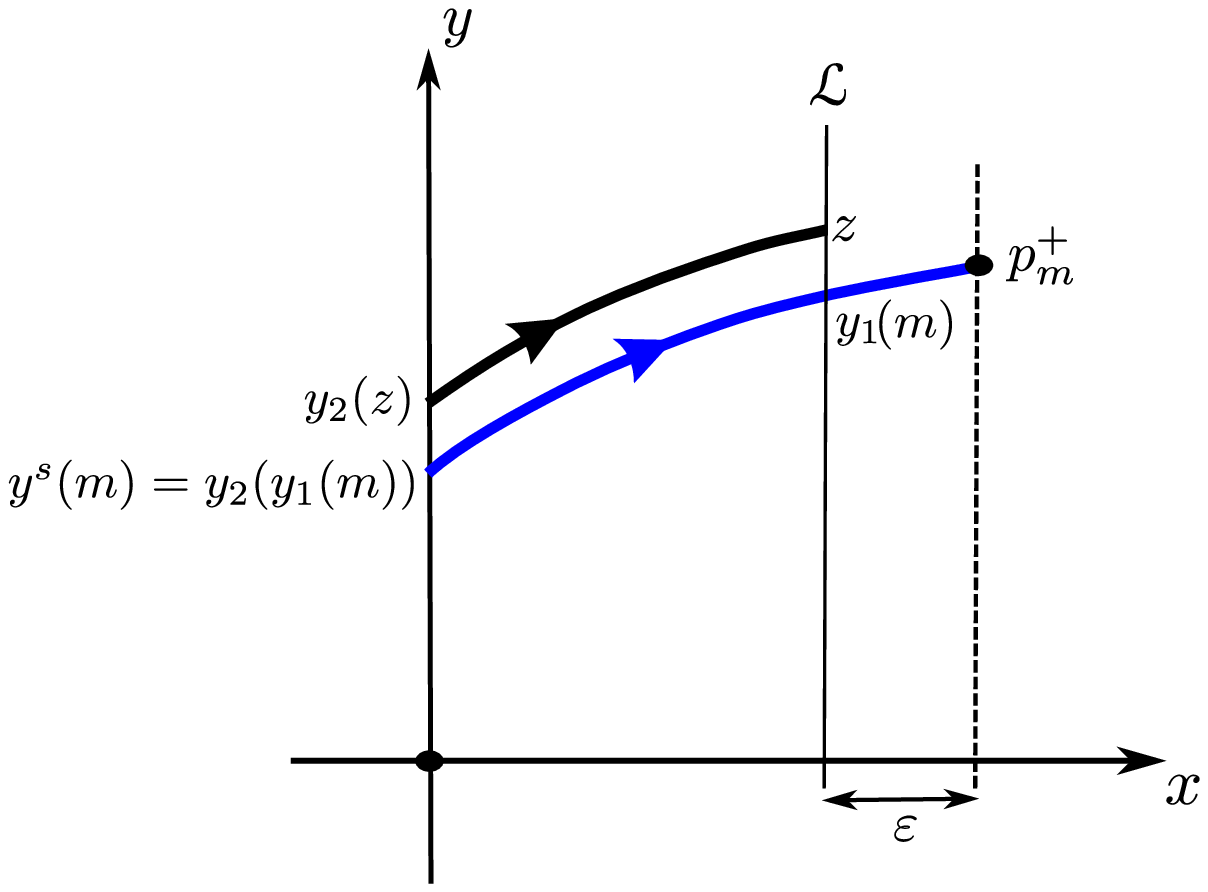,height=5.5cm}\\
(a) & \qquad (b)
\end{tabular}
\caption{Definition of the maps $\delta(m)$ and $y^s(m)$ in Proposition~\ref{ppp}.}
\label{anal}
\end{center}
\end{figure}

\begin{proof} This result is a consequence of the tools introduced in \cite{Pe}. We
only give the key points of that proof.

Fix a value $\widehat m$ for which $\delta(m)$ is defined. Simply
because the $Oy^+$ is transversal for the flow, the function
$\delta$ is well defined in a neighborhood of $\widehat m.$ It is
clear that it suffices to prove that $y^s(m)$ is analytic at
$m=\widehat m,$ because the $y^u(m)$ can be studied similarly. To
prove this fact we will write the map $y^s(m)$ as the composition of
two analytic maps.

Consider a vertical straight line
$\mathcal{L}:=\{(x,y)\,:\,x=\widehat m^{-1/4}-\varepsilon\}$, for
$\varepsilon>0$ small enough. Denote by $(
\widehat{m}^{-1/4}-\varepsilon ,y_1(m))$ the first cutting point of
the stable manifold of ${\bf p}_m^+$ with this line. Because
$\mathcal{L}$ is close enough to the saddle point it can be seen
that the local stable manifold cuts this line transversally.
Moreover, the tools given in \cite{Pe} prove that $y_1(m)$ is
analytic at $m=\widehat m,$ because of the hyperbolicity of the
saddle point. Next, consider the orbit starting on $\mathcal{L}$
with $y$-coordinate $y_1(\widehat m)$. In backward time, this orbit
cuts also transversally the $Oy^+$-axis at the point with
$y$-coordinate $y^s(\widehat m)$ and needs a finite time to arrive
to this point see Figure~\ref{anal} (b). Because of the
transversality to both lines, and the finiteness of the needed time
for going from one to the other, it is clear that the map $y_2(z)$
induced by the flow of the system between $\mathcal L$ and  the
$Oy^+$-axis is analytic  at $z=y_1(\widehat m)$. Since $y^s (m)=
y_2(y_1(m))$, the result follows.
\end{proof}

\begin{proof}[Proof of (iii) of Theorem~\ref{mteo}]
Notice that each value of $m$ that is  a zero of the map
$\delta(m),$ introduced in Proposition~\ref{ppp}, corresponds to a
system~\eqref{sism}  with a polycycle, i.e.
$\mathcal{M}=\{m\in(0.547,0.6)\,:\, \delta(m)=0\}.$ From
Proposition~\ref{547} we know that $\delta(0.547)>0$ and from
Proposition~\ref{35} that $\delta(0.6)<0.$ Hence the set
$\mathcal{M}$ is non-empty. Finally, because of the non-accumulation
property of the zeros of  analytic functions, the finiteness of
$\mathcal{M}$ follows.
\end{proof}

%
%

\section{Proof of Theorem \ref{mteo}}\label{se:teo}

The proof of Theorem~\ref{mteo} simply consists in gluing the corresponding results
proved along the paper. More concretely:
\begin{itemize}

\item The non existence of limit cycles and polycycles when
$m\in(-\infty,0.547]\cup[3/5,\infty)$ is given in the following results:
\begin{itemize}
\item For $m\in(-\infty,0]$, trivially in the introduction.
\item For $m\in(0,9/25]$ in Proposition~\ref{nc},
\item For $m\in(9/25,1/2)$ in Proposition~\ref{925},
\item For $m\in[1/2,0.547]$ in Proposition~\ref{547},
\item For  $m\in[3/5,\infty)$ in Proposition~\ref{35}.
\end{itemize}

\item The existence of  at most
 one limit cycle and one polycycle  when $m\in[1/2,3/5)$, the fact that
 they never  coexist, and the  hyperbolicity and  instability of the limit cycle,
 in Proposition~\ref{uniq}.

\item The phase portraits of the system in the Poincar\'{e} disc and the study of the
origin, in Subsection~\ref{ss:retrato} and Section~\ref{ss:monod}, respectively.

\item The proof of the existence of the phase portrait (b) in Figure~\ref{Sphere},
 only for finitely many values of $m$, in Section~\ref{se:hetero}.
\end{itemize}

\subsection*{Acknowledgements}
The first two authors are  supported by the MICIIN/FEDER  grant
number MTM2008-03437 and the Generalitat de Catalunya grant number
2009-SGR 410. The first author is also supported by the grant
AP2009-1189.

\end{document}